\DeclarePairedDelimiter{\floor}{\lfloor}{\rfloor}
\newcommand{\de}{\partial}
\newcommand{\take}{\,\backslash\,}
\newcommand{\wt}{\widetilde}
\newcommand{\Ideal}[1]{\left\langle#1\right\rangle}
\newcommand{\R}{\mathbb{R}}
\DeclareMathOperator{\dive}{div}
\DeclareMathOperator{\Span}{span}
\DeclareMathOperator{\range}{range}
\DeclareMathOperator{\End}{End}
\DeclareMathOperator{\dom}{dom}
\DeclareMathOperator{\supp}{supp}
\DeclareMathOperator{\spin}{spin}
\DeclareMathOperator{\Char}{Char}
\newcommand{\Th}{^\text{\textnormal{th}}}
\newcommand{\ol}{\overline}
\newcommand{\wh}{\widehat}
\newcommand{\Cl}{\mathbb{C}\text{\textnormal{l}}}
\newcommand{\C}{\mathbb{C}}
\newcommand{\bs}{\boldsymbol}
\newcommand{\D}{\mathcal{D}}
\newcommand{\A}{\mathcal{A}}
\newcommand{\h}{\mathcal{H}}
\newcommand{\E}{\mathcal{E}}
\newcommand{\Z}{\mathbb{Z}}
\newcommand{\bC}{\mathfrak{C}}
\newcommand{\bc}{\mathfrak{c}}
\newcommand{\T}{\mathbb{T}}
\newtheorem{thm}{Theorem}
\newtheorem{lemma}[thm]{Lemma}
\newtheorem{prop}[thm]{Proposition}
\newtheorem{cor}[thm]{Corollary}
\theoremstyle{definition}
\newtheorem{defn}[thm]{Definition}
\newtheorem*{rem}{Remark}
\def\section{\@startsection{section}{1}{\z@}{-3.5ex plus -1ex minus
  -.2ex}{2.3ex plus .2ex}{\large\bf}}
\def\subsection{\@startsection{subsection}{2}{\z@}{-3.25ex plus -1ex
  minus -.2ex}{1.5ex plus .2ex}{\normalsize\bf}}
\author{Iain Forsyth$^{\dag\S}$, Adam Rennie$^\S$\thanks{email: 
\texttt{iain.forsyth@anu.edu.au}, \texttt{renniea@uow.edu.au}
},
\\[3pt]
\dag Mathematical Sciences Institute, Australian National University,\\  Canberra, Australia
\\[3pt]
\S School of Mathematics and Applied Statistics, University of Wollongong,\\
Wollongong, Australia\\
}
\title{Factorisation of equivariant spectral triples in unbounded $KK$-theory.}
\date{}
\begin{document}
\maketitle
\parindent=0.0in
\parskip=4pt
\vspace{-16pt}
\begin{abstract}  
We provide sufficient conditions to factorise an equivariant spectral triple as a Kasparov
product of unbounded classes constructed from the group action on the algebra and
from the fixed point spectral triple. Our results are for the action of compact abelian Lie groups, and
we demonstrate them with examples from manifolds and $\theta$-deformations. In particular
we show that equivariant Dirac-type spectral triples on the total space of a torus principal bundle
always factorise. We also present
an example that shows what goes wrong in the absence of our sufficient conditions
(and how we get around it for this example).
\end{abstract} 
Keywords: $KK$-theory, spectral triple, Kasparov product\\
AMS 2010: 19K35, 46L87.
\section{Introduction}
In this paper we provide sufficient conditions to factorise a 
$G$-equivariant spectral triple $(\A,\h,\D)$, for $G$ compact abelian, 
as a Kasparov product of a `fixed point' spectral
triple and a Kasparov module constructed solely from the action of the group on the algebra.
More precisely, given our sufficient conditions, we find unbounded cycles representing classes in $KK_G^{\dim G}(A,A^G)$ and $KK_G^{j+\dim G}(A^G,\C)$, with $A$ the norm completion of $\A$,
such that  the  Kasparov product of these classes 
$$
KK_G^{\dim G}(A,A^G)\times KK_G^{j+\dim G}(A^G,\C)\rightarrow KK_G^j(A,\C)
$$
recovers the class of $(\A,\h,\D)$ in $KK^j(A,\C)$.

In order to  define the Kasparov module 
with class in $KK_G^{\dim G}(A,A^G)$, we require 
that the action of $G$ on $A$ satisfies the spectral subspace assumption of \cite{CNNR}. 
To define the unbounded Kasparov module with class in 
$KK_G^{j+\dim G}(A^G,\C)$, we need a Clifford action 
$$
\eta:\Gamma(\Cl(G))^{G}\cong\Cl_{\dim G}\rightarrow B(\h)
$$
satisfying a few compatibility conditions. Finally,  the product of these classes
represents the class of $(\A,\h,\D)$, 
provided that one positivity constraint is satisfied: this constraint
arises from Kucerovsky's criteria \cite{Kucerovsky}. 

Our factorisation results show that the class of our equivariant spectral triple is the 
product of classes with unbounded representatives, which are defined in terms
of the original spectral triple subject to some geometric constraints. The constructive 
approach to the Kasparov product, \cites{BMS,KaadLesch,Mesland,MR}, seeks to
construct a spectral triple from unbounded representatives of composable $KK$-classes.
Having obtained a factorisation, say,
$$
[(\A,\h,\D)]=[(\A',E_{A^G},\D_1)]\wh{\otimes}_{A^G}[(\A^G,\h_2,\D_2)]
$$
it is natural to ask whether the constructive product of $(\A',E_{A^G},\D_1)$ and 
$(\A^G,\h_2,\D_2)$ makes sense and recovers the original triple $(\A,\h,\D)$. We examine this 
question for an equivariant Dirac-type spectral triple $(C^\infty(M),L^2(S),\D)$ on
a compact Riemannian manifold with a free isometric torus action, where we
show that factorisation holds in our sense. In this special case, 
we show that the constructive method produces a spectral triple $(C^\infty(M),L^2(S),T)$
whose
$KK$-class is the same as that of $(C^\infty(M),L^2(S),\D)$. The operator $T$ 
is a self-adjoint elliptic first order differential operator, but the difference $\D-T$ is
typically unbounded. If each orbit in $M$
is an isometrically embedded copy of $\T^n$, we find that $\D-T$ is bounded. Thus we see evidence
in these examples that the constructive product is sensitive to metric data.

Factorisation of circle-equivariant spectral triples has 
also been studied in \cite{BMS}, \cites{DS2013,DSZ2014} and the 
Ph.D. thesis of A. Zucca, \cite{Zucca}. The last three of these works study such factorisations under the condition 
of ``fibres of constant length'', a condition which is also satisfied in the
examples studied in \cite{BMS}. Such a condition appears in 
Corollary \ref{cor:connectionfactorisation}, and corresponds to the isometric embedding
of orbits (up to a constant multiple).

Finally, we consider in detail the factorisation of the 
Dirac operator over the 2-sphere, for rotation by the circle. 
In this case, the circle action is not free and 
factorisation for $C(S^2)$ is not possible, but we 
show that factorisation is nevertheless possible if 
one restricts to the $C^*$-algebra of continuous functions vanishing at the poles.

\textbf{Acknowledgements.} Both authors were supported by the Australian Research Council, 
and the authors thank the University of Wollongong, the Australian National University and
the Hausdorff Institute for Mathematics
for hospitality.
We would also like to thank Simon Brain, Magnus Goffeng and Walter van Suijlekom 
for useful discussions.
\section{The construction of the unbounded $KK$-cycles.}
\begin{defn}\label{defn:kasmodule}
Let $A$ and $B$ be $\Z_2$-graded 
$C^*$-algebras carrying respective actions 
$\alpha$ and $\beta$ by a compact group $G$. 
An \emph{unbounded equivariant Kasparov 
$A$-$B$-module} $(\A,E_B,\D)$ consists of an invariant 
dense sub-$\ast$-algebra $\A\subset A$, a 
countably generated $\Z_2$-graded 
right Hilbert $B$-module $E$ with a homomorphism 
$V$ from $G$ into the invertible degree zero bounded 
linear (not necessarily adjointable) operators on $E$, 
a $\Z_2$-graded $\ast$-homomorphism 
$\phi:A\rightarrow\End_B(E)$, and an odd, 
self-adjoint, regular operator $\D:\dom(\D)\subset E\rightarrow E$ such that:\\
(1) $V_g(\phi(a)eb)=\phi(\alpha_g(a))V_g(e)\beta_g(b)$ and
$(V_ge|V_gf)_B=\beta_g((e|f)_B)$ for all $g\in G$, $a\in A$, $e\in E$ and $b\in B$;\\
(2) $\phi(a)\cdot\dom(\D)\subset\dom(\D)$, and the graded commutator
$[\D,\phi(a)]_\pm$ is bounded for all $a\in\A$;\\
(3) $\phi(a)(1+\D^2)^{-1/2}$ is a compact endomorphism for all $a\in\A$;\\
(4) $V_g\cdot\dom(\D)\subset\dom(\D)$, and $[\D,V_g]=0$.
\end{defn}
\begin{rem}
We normally suppress the notation $\phi$. 
The unbounded Kasparov module $(\A,E_B,\D)$ defines a class in the abelian group $KK_G(A,B)$,
\cite{BaajJulg}.
\end{rem}
\begin{rem}
We will only employ unbounded equivariant Kasparov $A$-$B$-modules 
for which the action of $G$ on $B$ is trivial. Then  for all $g\in G$, $V_g$ is adjointable with adjoint $V_g^*=V_{g^{-1}}$.
\end{rem}
\begin{defn}
Let $A$ be a $\Z_2$-graded $C^*$-algebra with an 
action by a compact group $G$. An \emph{even equivariant spectral triple} $(\A,\h,\D)$ 
for $A$ is an unbounded equivariant Kasparov $A$-$\C$-module. 
If $A$ is trivially $\Z_2$-graded, then one can also 
define an \emph{odd equivariant spectral triple} $(\A,\h,\D)$, 
which has the same definition, except that $\h^1=\{0\}$ and $\D$ need not be odd. 
\end{defn}
Throughout this section, $G$ is a compact abelian 
Lie group, equipped with the normalised Haar measure, 
and $(\A,\h,\D)$ is an even $G$-equivariant spectral triple for a 
$\Z_2$-graded separable $C^*$-algebra $A$ 
carrying an action $\alpha$ by $G$. (The case that the spectral triple is odd is considered later.)

There are some differences between the cases of $G$ 
even dimensional and $G$ odd dimensional. We introduce 
the following notation so that we may handle both cases simultaneously.
\begin{defn}
\label{defn:bc}
Let $\Cl_1$ be the Clifford algebra generated 
by a self-adjoint unitary $c$, which is $\Z_2$-graded by
$$
\Cl_1^j=\Span\{c^j\},\quad j\in\Z_2.
$$
We denote by $\bC$ the $\Z_2$-graded $C^*$-algebra
$$
\bC=\left\{\begin{array}{ll}\C&\text{if }G\text{ is even dimensional}\\
\Cl_1&\text{if }G\text{ is odd dimensional.}\end{array}\right.
$$
We also denote by $\bc$ the generator of $\bC$; i.e.
$$
\bc=\left\{\begin{array}{ll}1&\text{if }G\text{ is even dimensional}\\
c&\text{if }G\text{ is odd dimensional.}\end{array}\right.
$$
\end{defn}
We will construct three unbounded $KK$-cycles. The first cycle 
(referred to as the left-hand module), is constructed using the spin 
Dirac operator over $G$, and defines a class in $KK_G(A,A^G\wh{\otimes}\bC)$. 
The second cycle, which we call the middle module, represents a class in 
$KK_G(A^G\wh{\otimes}\bC,A^G\wh{\otimes}\Gamma(\Cl(G))^G)$. 
The module is simply the Morita equivalence between 
$A^G\wh{\otimes}\bC$ and $A^G\wh{\otimes}\Gamma(\Cl(G))^G\cong A^G\wh{\otimes}\Cl_n$, 
and so contains no homological information. The third cycle (the right-hand module) 
is constructed by restricting the spectral triple to a spectral subspace of $\h$, 
and adding a representation of $\Gamma(\Cl(G))^G$, so that it defines a 
class in $KK_G(A^G\wh{\otimes}\Gamma(\Cl(G))^G,\C)$.
\subsection{The left-hand module.}
Let $\Char(G)$ be the characters of $G$, which is 
the set of smooth homomorphisms $\chi:G\rightarrow U(1)$. 
Since $G$ is abelian, the characters form a group under multiplication. 
For each $\chi\in\Char(G)$, let
$$
A_\chi=\{a\in A:\alpha_g(a)=\chi(g)a\}
$$
be the \emph{spectral subspace} of $A$ associated 
with the character $\chi$. Note that $\bigoplus_{\chi\in\Char(G)}A_\chi$ is dense in $A$.
For each $\chi\in\Char(G)$, define $\Phi_\chi:A\rightarrow A$ by
$$
\Phi_\chi(a)=\int_G\chi^{-1}(g)\alpha_g(a)\,dg.
$$
Each $\Phi_\chi$ is a continuous idempotent with $\range\Phi_\chi=A_\chi$.
\begin{defn}
The action of $G$ on $A$ is said to satisfy the \emph{spectral subspace assumption} 
(SSA) if the norm closure $\ol{A_\chi A_\chi^*}$ is a complemented ideal in the 
fixed point algebra $A^G$ for each $\chi\in\Char(G)$.
\end{defn}
We define an $A^G$-valued inner product on $A$ by
$$
(a|b)_{A^G}:=\Phi_1(a^*b)=\int_G\alpha_g(a^*b)\,dg.
$$
With this inner product, $A$ is a right pre-Hilbert $A^G$-module. 
Hence the completion of $A$ with respect to $(\cdot|\cdot)_{A^G}$ 
is a right Hilbert $A^G$-module, which we denote by $X$.
The $\Z_2$-grading of $A$ defines a $\Z_2$-grading of 
$X$, which makes $X$ into a $\Z_2$-graded right Hilbert 
$A^G$-module. The action of $G$ on $A$ extends to a unitary action 
$\alpha:G\rightarrow\End_{A^G}(X)$.\begin{rem}
Let $\chi\in\Char(G)$, and let $a,b\in A_\chi$. Then 
$a^*b\in A^G$, so $(a|b)_{A^G}=a^*b$. Hence $A_\chi$ is closed in $X$, and so
$$
X_\chi:=\{x\in X:\alpha_g(x)=\chi(g)x\}=A_\chi.
$$
\end{rem}
The following is a more general version of \cite{PR}*{Lemma 4.2} 
or \cite{CNNR}*{Lemma 2.4}. The result there is for the case 
$G=\mathbb{T}$, but the proof is much the same as in the general case.
\begin{lemma}
\label{lem:innerproductsum}
For each $\chi\in\Char(G)$, the map $\Phi_\chi:A\rightarrow A$ 
extends to an adjointable projection $\Phi_\chi:X\rightarrow X$ 
with range $A_\chi$. Moreover,
$$
(x|y)_{A^G}=\sum_{\chi\in\Char(G)}\Phi_\chi(x)^*\Phi_\chi(y)
$$
for all $x,y\in X$, and the sum $\sum_{\chi\in\Char(G)}\Phi_\chi$ 
converges strictly to the identity on $X$.
\end{lemma}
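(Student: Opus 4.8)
The plan is to prove everything first on $A$ regarded as a right pre-Hilbert $A^G$-module, and then to pass to the completion $X$ by continuity. On $A$ the operator $\Phi_\chi$ is an idempotent, since $\alpha_g(\Phi_\chi(a))=\chi(g)\Phi_\chi(a)$ forces $\Phi_\chi(\Phi_\chi(a))=\int_G\chi(g)^{-1}\chi(g)\Phi_\chi(a)\,dg=\Phi_\chi(a)$, and $\range\Phi_\chi=A_\chi$ by definition of the spectral subspace. The key algebraic fact is $(\Phi_\chi(a))^*=\Phi_{\chi^{-1}}(a^*)$, coming from $\overline{\chi(g)^{-1}}=\chi(g)$. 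Using this, Fubini, and the translation invariance of Haar measure on the abelian group $G$ (substituting $u=hg$ in the resulting double integral), one computes $(\Phi_\chi(a)|b)_{A^G}=(\Phi_\chi(a))^*\Phi_\chi(b)$ for all $a,b\in A$; interchanging $a$ and $b$ and taking adjoints gives $(a|\Phi_\chi(b))_{A^G}=(\Phi_\chi(a))^*\Phi_\chi(b)$ as well, so $\Phi_\chi$ is symmetric. A symmetric idempotent is a Hilbert-module contraction, because $\|\Phi_\chi(a)\|_X^2=\|(\Phi_\chi(a)|a)_{A^G}\|\le\|\Phi_\chi(a)\|_X\|a\|_X$ by Cauchy--Schwarz; hence $\Phi_\chi$ extends to an adjointable projection on $X$. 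Its range is $A_\chi$: by continuity $\Phi_\chi(X)\subseteq\overline{\Phi_\chi(A)}^X=A_\chi$, the closure being redundant since, as noted in the preceding remark, $A_\chi$ is already closed in $X$; and $\Phi_\chi$ fixes $A_\chi$ pointwise.

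Next I would record the orthogonality relations $\Phi_\chi\Phi_\psi=0$ for $\chi\ne\psi$. On $A$ this is immediate, $\Phi_\chi(\Phi_\psi(a))=\Phi_\psi(a)\int_G\chi(g)^{-1}\psi(g)\,dg=0$ since the nontrivial character $\chi^{-1}\psi$ integrates to zero over $G$, and it passes to $X$ by density. Consequently, for each finite $F\subseteq\Char(G)$ the partial sum $P_F:=\sum_{\chi\in F}\Phi_\chi$ is a self-adjoint projection on $X$, so $\|P_F\|\le 1$. On the algebraic direct sum $\bigoplus_{\chi}A_\chi$, which is dense in $A$ and hence in $X$, the net $(P_F)$ is eventually the identity; with the uniform bound this yields $\|P_Fx-x\|_X\to 0$ for all $x\in X$. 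Since the $P_F$ are self-adjoint, approximating compact endomorphisms of $X$ by finite-rank endomorphisms upgrades this pointwise convergence to strict convergence $P_F\to\id$ in $\End_{A^G}(X)$.

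For the inner-product identity, fix $x,y\in X$. Joint continuity of the $A^G$-valued inner product gives $(x|y)_{A^G}=\lim_F(P_Fx|P_Fy)_{A^G}$, and expanding via orthogonality, $(P_Fx|P_Fy)_{A^G}=\sum_{\chi\in F}(\Phi_\chi x|\Phi_\chi y)_{A^G}$. It remains to see that $(\Phi_\chi x|\Phi_\chi y)_{A^G}=\Phi_\chi(x)^*\Phi_\chi(y)$ on all of $X$: both sides are continuous in $(x,y)$ — here $\Phi_\chi$ maps continuously into $A_\chi$, on which the $X$-norm agrees with the $C^*$-norm of $A$ — and they coincide on $A$ by the identity $(\Phi_\chi(a)|b)_{A^G}=(\Phi_\chi(a))^*\Phi_\chi(b)$ established above together with $\Phi_\chi^2=\Phi_\chi$. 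Letting $F\uparrow\Char(G)$ then gives $(x|y)_{A^G}=\sum_{\chi\in\Char(G)}\Phi_\chi(x)^*\Phi_\chi(y)$.

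I expect the step requiring the most care to be the symmetry of $\Phi_\chi$, i.e. the change of variables in the double Haar integral, rather than a genuine obstacle; conceptually the point to keep in mind is that $\sum_\chi\Phi_\chi(x)$ need not converge in the $C^*$-norm of $A$ but does converge in the weaker Hilbert-module norm of $X$, which is exactly why one completes $A$ to $X$. Beyond this the argument is just bookkeeping, following \cite{CNNR}*{Lemma 2.4} (and \cite{PR}*{Lemma 4.2}) with $\T$ replaced by the general compact abelian group $G$.
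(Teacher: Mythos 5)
Your argument is correct and is essentially the standard one the paper invokes by citing \cite{PR}*{Lemma 4.2} and \cite{CNNR}*{Lemma 2.4}: symmetry of $\Phi_\chi$ via the double Haar integral, boundedness and adjointability on $X$, mutual orthogonality, uniformly bounded partial-sum projections converging pointwise and hence strictly, and continuity to extend the inner-product identity from $A$ to $X$. No gaps; the delicate points (norm agreement on $A_\chi$, self-adjointness for the $K P_F\to K$ half of strict convergence) are all handled.
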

Let $\bs{\$}_G$ be the trivial flat complex spinor bundle over $G$, with 
Dirac operator $\D_G$. The left multiplication of $G$
on itself lifts to a strongly continuous unitary representation 
$V$ on $L^2(\bs{\$}_G)$ which makes 
$(C^\infty(G),L^2(\bs{\$}_G),\D_G)$ into a $G$-equivariant spectral triple,
which is even if and only if $\dim G$ is even, \cite{Slebarski1985}. Then 
$(C^\infty(G), (L^2(\bs{\$}_G)\wh{\otimes}\bC)_\bC,\D_G\wh{\otimes}\bc)$ 
is a $G$-equivariant unbounded Kasparov $C(G)$-$\bC$-module 
for $G$ either even or odd dimensional. 
\begin{defn}
\label{defn:pchi}
Let $X\wh{\otimes}(L^2(\bs{\$}_G)\wh{\otimes}\bC)$ be the external tensor product of $X$ and $L^2(\bs{\$}_G)\wh{\otimes}\bC$, which is a $\Z_2$-graded right Hilbert $A^G\wh{\otimes}\bC$-module. Let $E_1$ be the invariant submodule of $X\wh{\otimes}(L^2(\bs{\$}_G)\wh{\otimes}\bC)$ under the diagonal action $g\cdot(x\wh{\otimes}(s\wh{\otimes}z))=\alpha_g(x)\wh{\otimes}(V_gs\wh{\otimes}z)$. Let $V_1$ be the homomorphism from $G$ into the unitaries of $E_1$ defined by
$$
V_{1,g}(x\wh{\otimes}(s\wh{\otimes}z))=\alpha_g(x)\wh{\otimes}(s\wh{\otimes}z).
$$
For each $\chi\in\Char(G)$, let $p_\chi'\in B(L^2(\bs{\$}_G))$ be the orthogonal projection onto
$$
L^2(\bs{\$}_G)_\chi=\{s\in L^2(\bs{\$}_G):V_g(s)=\chi(g)s\},
$$
and define $p_\chi\in\End_{\bC}(L^2(\bs{\$}_G)\wh{\otimes}\bC)$ by
$
p_\chi(s\wh{\otimes}z)=p_\chi's\wh{\otimes}z.
$
\end{defn}
The following result is elementary, but will be quite useful in later calculations.
\begin{lemma}\label{lem:prodesp}
For elements of homogeneous degree, the $A^G\wh{\otimes}\bC$-valued inner product on $E_1$ can be expressed (for $x_1,\,x_2\in X$ and $s_1,\,s_2\in L^2(\bs{\$}_G)\wh{\otimes}\bC$) as 
\begin{align*}
(x_1\wh{\otimes}s_1|x_2\wh{\otimes}s_2)_{A^G\wh{\otimes}\bC}=(-1)^{\deg s_1\cdot(\deg x_1+\deg x_2)}\sum_{\chi\in\Char(G)}\Phi_\chi(x_1)^*\Phi_\chi(x_2)\wh{\otimes}(p_{\chi^{-1}} s_1|p_{\chi^{-1}} s_2)_{\bC}.
\end{align*}
\end{lemma}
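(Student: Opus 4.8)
The plan is to compute the inner product directly from the external-tensor-product structure, using that $E_1$ is the range of an averaging projection. First I would note that the diagonal action $g\cdot(x\wh\otimes s)=\alpha_g(x)\wh\otimes V_g s$ on $E:=X\wh\otimes(L^2(\bs{\$}_G)\wh\otimes\bC)$ is by degree-zero unitaries, adjointable because $G$ acts trivially on $A^G\wh\otimes\bC$; hence $P:=\int_G g\cdot\,dg$ is a self-adjoint projection of $E$ onto the invariant submodule $E_1$. Under the (harmless) convention that $x_i\wh\otimes s_i$ stands for its image $P(x_i\wh\otimes s_i)\in E_1$, the quantity to compute is $(x_1\wh\otimes s_1|P(x_2\wh\otimes s_2))_{A^G\wh\otimes\bC}$. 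Expanding $P(x_2\wh\otimes s_2)=\int_G\alpha_g(x_2)\wh\otimes V_g s_2\,dg$ (no sign, since $\alpha_g,V_g$ preserve degree) and applying the external-product inner-product formula produces the sign $(-1)^{\deg s_1(\deg x_1+\deg x_2)}$ appearing in the statement, and leaves $\int_G(x_1|\alpha_g(x_2))_{A^G}\wh\otimes(s_1|V_g s_2)_\bC\,dg$ to evaluate.

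I would evaluate the integrand factor by factor. For the first, Lemma~\ref{lem:innerproductsum} together with the identity $\Phi_\chi\circ\alpha_g=\chi(g)\Phi_\chi$ (immediate from the definition of $\Phi_\chi$ and translation-invariance of Haar measure) gives $(x_1|\alpha_g(x_2))_{A^G}=\sum_\chi\chi(g)\Phi_\chi(x_1)^*\Phi_\chi(x_2)$. For the second, the decomposition $L^2(\bs{\$}_G)=\bigoplus_\psi L^2(\bs{\$}_G)_\psi$ with $V_g$ acting as $\psi(g)$ on the $\psi$-summand, together with orthogonality of distinct summands, gives $(s_1|V_g s_2)_\bC=\sum_\psi\psi(g)(p_\psi s_1|p_\psi s_2)_\bC$. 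Substituting both and using the character orthogonality relation $\int_G\chi(g)\psi(g)\,dg=\delta_{\psi,\chi^{-1}}$ collapses the double sum onto $\psi=\chi^{-1}$, which is exactly the asserted formula; note this is where the twist $p_{\chi^{-1}}$ comes from.

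The only point needing care is the interchange of $\int_G$ with the sums over $\chi$ and $\psi$, which need not converge absolutely in general. Rather than confront this I would circumvent it: both sides of the asserted identity are bilinear and norm-continuous in $(x_1,x_2)$ and in $(s_1,s_2)$ --- continuity of the right side follows from a Cauchy--Schwarz estimate using $\sum_\chi\|p_\chi' t\|^2=\|t\|^2$ and Lemma~\ref{lem:innerproductsum} --- and since $\bigoplus_\chi A_\chi$ and $\bigoplus_\psi L^2(\bs{\$}_G)_\psi$ are dense in $X$ and $L^2(\bs{\$}_G)$, it suffices to check the identity on $x_i\in A_{\chi_i}$ and $s_i=t_i\wh\otimes z_i$ with $t_i\in L^2(\bs{\$}_G)_{\psi_i}$ and $z_i\in\bC$ homogeneous. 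There $P(x_i\wh\otimes s_i)$ equals $x_i\wh\otimes s_i$ or $0$ according as $\chi_i\psi_i=1$ or not, while $\Phi_\chi(x_i)$ and $p_{\chi^{-1}}s_i$ are supported at $\chi=\chi_i$ and $\chi=\psi_i^{-1}$ respectively; a short case-check then shows both sides vanish unless $\chi_1=\chi_2$ and $\chi_1\psi_1=\chi_2\psi_2=1$, and that both equal $(-1)^{\deg s_1(\deg x_1+\deg x_2)}x_1^*x_2\wh\otimes(s_1|s_2)_\bC$ in that remaining case. I expect this bookkeeping --- the external-product sign, the orthogonality relations, and the convergence of the right-hand side --- to be the only real content; the core manipulation is a single line.
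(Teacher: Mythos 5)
Your proof is correct, and since the paper states Lemma \ref{lem:prodesp} without proof (declaring it elementary), your argument --- averaging projection onto $E_1$, the spectral decompositions from Lemma \ref{lem:innerproductsum} and Definition \ref{defn:pchi}, character orthogonality producing the $p_{\chi^{-1}}$ twist, and a density/continuity reduction to spectrally homogeneous simple tensors --- is exactly the intended verification. Your reading of $x\wh{\otimes}s$ as $P(x\wh{\otimes}s)\in E_1$ is also the right one, as it is the interpretation consistent with how the lemma is used later (e.g.\ in Lemma \ref{lem:isometric} and the formula for $\Psi^{-1}$).
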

\begin{prop}
Define an action of $\bigoplus_{\chi\in\Char(G)}A_\chi$ on $E_1$ by
$$
\sum_{\chi\in\Char(G)}a_\chi\cdot(x\wh{\otimes} s):=\sum_{\chi\in\Char(G)}a_\chi x\wh{\otimes}\chi s,
\quad\mbox{for}\ \  \sum a_\chi\in \bigoplus_{\chi\in\Char(G)}A_\chi,\ \ x\wh{\otimes}s\in E_1.
$$
This action extends to a $\Z_2$-graded $\ast$-homomorphism 
$\phi:A\rightarrow\End_{A^G\wh{\otimes}\bC}(E_1)$ satisfying
$$
V_{1,g}(\phi(a)e)=\phi(\alpha_g(a))V_{1,g}(e),\qquad a\in A,\ e\in E_1.
$$
\end{prop}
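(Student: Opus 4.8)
The plan is to recognise the prescribed action on $E_1$ as the restriction of an external tensor product of bounded operators, and then to diagonalise that tensor product by a unitary that strips the character functions off the spinor part. This last move is what makes the continuous extension to all of $A$ possible, and I expect it to be the main obstacle.

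I would first record two ingredients. Left multiplication on $X$ is adjointable, since $(ax|y)_{A^G}=\Phi_1(x^*a^*y)=(x|a^*y)_{A^G}$ and $\Phi_1$ is positive; thus $a\mapsto L_a$ is a $\Z_2$-graded $\ast$-homomorphism $L\colon A\to\End_{A^G}(X)$ with $L_a^*=L_{a^*}$ and $\|L_a\|\le\|a\|$. For $\chi\in\Char(G)$ write $M_\chi$ for multiplication by the function $\chi$ on $L^2(\bs{\$}_G)$, extended to an even unitary on $L^2(\bs{\$}_G)\wh{\otimes}\bC$; then $M_\chi M_\psi=M_{\chi\psi}$, $M_\chi^*=M_{\chi^{-1}}$, and since $V$ is the lift of left translation, $V_gM_\chi V_g^{-1}=\chi(g)^{-1}M_\chi$, while $\alpha_gL_{a_\chi}\alpha_g^{-1}=\chi(g)L_{a_\chi}$ for $a_\chi\in A_\chi$. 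The two signs being opposite, $L_{a_\chi}\wh{\otimes}M_\chi$ commutes with the diagonal action $\alpha_g\wh{\otimes}V_g$ for all $g$, hence with the invariance projection $\int_G\alpha_g\wh{\otimes}V_g\,dg$, and so preserves $E_1$. On a homogeneous simple tensor $x\wh{\otimes}s$ with $x\in X_\psi$ and $s\in L^2(\bs{\$}_G)_{\psi^{-1}}\wh{\otimes}\bC$ it acts as $x\wh{\otimes}s\mapsto a_\chi x\wh{\otimes}\chi s$, which is the formula in the statement; since such tensors span $E_1$ densely, $\phi(a_\chi):=(L_{a_\chi}\wh{\otimes}M_\chi)|_{E_1}$ is well defined. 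Its adjoint $L_{a_\chi^*}\wh{\otimes}M_{\chi^{-1}}$ also preserves $E_1$ and restricts there to the operator assigned to $a_\chi^*\in A_{\chi^{-1}}$, so $\phi(a_\chi)^*=\phi(a_\chi^*)$; and $\phi(a_\chi)\phi(b_\psi)=\phi(a_\chi b_\psi)$ follows from $L_{a_\chi}L_{b_\psi}=L_{a_\chi b_\psi}$ and $M_\chi M_\psi=M_{\chi\psi}$, with no Koszul sign as $M_\chi$ is even. Thus $\phi$ is a $\Z_2$-graded $\ast$-homomorphism on $\bigoplus_\chi A_\chi$.

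The substantive step is the continuous extension to $A$: the estimates $\|\phi(a_\chi)\|\le\|a_\chi\|$ do not control $\|\phi(\sum_\chi a_\chi)\|$ by $\|\sum_\chi a_\chi\|$, and $\bigoplus_\chi A_\chi$ is not spectrally invariant in $A$. I would instead build a unitary $W\colon E_1\to X\wh{\otimes}(\Sigma\wh{\otimes}\bC)$ (external tensor product over $\C$, $\Sigma$ the fibre of $\bs{\$}_G$) that intertwines $\phi$ with $L\wh{\otimes}\id$. Realising $L^2(\bs{\$}_G)$ equivariantly as $L^2(G)\wh{\otimes}\Sigma$ with $G$ acting by left translation on the first leg, one has $L^2(\bs{\$}_G)_{\psi^{-1}}=\C\psi\wh{\otimes}\Sigma$, so $E_1$ is the closure of $\bigoplus_\psi X_\psi\wh{\otimes}(\C\psi\wh{\otimes}\Sigma\wh{\otimes}\bC)$; define $W$ on this dense subspace by $x\wh{\otimes}(\psi\wh{\otimes}\sigma\wh{\otimes}z)\mapsto x\wh{\otimes}(\sigma\wh{\otimes}z)$. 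Using Lemma~\ref{lem:prodesp} and $\|\psi\|_{L^2(G)}=1$, $W$ preserves the $A^G\wh{\otimes}\bC$-valued inner product on homogeneous simple tensors, and Lemma~\ref{lem:innerproductsum}, which identifies $X$ with the orthogonal completion of $\bigoplus_\psi X_\psi$, shows $W$ extends to a grading-preserving unitary of $A^G\wh{\otimes}\bC$-modules. A direct computation on homogeneous simple tensors gives $W\phi(a_\chi)W^{-1}=L_{a_\chi}\wh{\otimes}\id_{\Sigma\wh{\otimes}\bC}$, hence $W\phi(a)W^{-1}=L_a\wh{\otimes}\id$ for all $a\in\bigoplus_\chi A_\chi$ by linearity. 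So on $\bigoplus_\chi A_\chi$ the map $\phi$ is the composite of the $\Z_2$-graded $\ast$-homomorphisms $a\mapsto L_a$, $T\mapsto T\wh{\otimes}\id_{\Sigma\wh{\otimes}\bC}$, and conjugation by $W$, each of which is defined on the relevant full $C^*$-algebra; therefore $\phi$ extends to a $\Z_2$-graded $\ast$-homomorphism $\phi\colon A\to\End_{A^G\wh{\otimes}\bC}(E_1)$ with $\|\phi(a)\|=\|L_a\|\le\|a\|$.

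Finally, $V_{1,g}$ acts on $X\wh{\otimes}(L^2(\bs{\$}_G)\wh{\otimes}\bC)$ through $\alpha_g$ on the first leg only, so for $a_\chi\in A_\chi$ and a homogeneous simple tensor $e=x\wh{\otimes}s$ both $V_{1,g}(\phi(a_\chi)e)$ and $\phi(\alpha_g(a_\chi))V_{1,g}(e)$ equal $\chi(g)\,a_\chi\alpha_g(x)\wh{\otimes}\chi s$, using $\alpha_g(a_\chi)=\chi(g)a_\chi$, multiplicativity of $\alpha_g$, and linearity of $\phi$. By linearity, density of homogeneous simple tensors, and boundedness of $\phi(a)$, $V_{1,g}$ and $\alpha_g$, the covariance identity $V_{1,g}(\phi(a)e)=\phi(\alpha_g(a))V_{1,g}(e)$ follows for all $a\in A$ and $e\in E_1$.
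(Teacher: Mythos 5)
Your proposal is correct, and it reaches the conclusion by a noticeably different mechanism than the paper's own proof. The paper works entirely inside $E_1$: it checks boundedness of the action of a single $a_\chi$ on $X$ via Lemma \ref{lem:innerproductsum}, verifies the graded adjoint identity $\big(x_1\wh{\otimes}s_1\big|a_\chi\cdot(x_2\wh{\otimes}s_2)\big)=\big(a_\chi^*\cdot(x_1\wh{\otimes}s_1)\big|x_2\wh{\otimes}s_2\big)$ by direct computation with the inner product formula of Lemma \ref{lem:prodesp}, and then simply asserts that the resulting $\ast$-homomorphism on $\oplus_\chi A_\chi$ extends to $A$ (this is the step inherited from the $G=\T$ construction in the cited literature). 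You instead realise $\phi(a_\chi)$ as the restriction to $E_1$ of the ambient adjointable operator $L_{a_\chi}\wh{\otimes}M_\chi$, which commutes with the averaging projection because the two covariance phases cancel, and then you supply the extension argument explicitly: the unitary $W$ built from the equivariant trivialisation $L^2(\bs{\$}_G)\cong L^2(G)\wh{\otimes}\Sigma$ (with $\|\chi\|_{L^2(G)}=1$ for the normalised Haar measure) intertwines $\phi$ with $L\wh{\otimes}\mathrm{id}$, so $\|\phi(a)\|=\|L_a\|\le\|a\|$ on the dense $\ast$-subalgebra and the extension to $A$ is automatic. What each approach buys: the paper's computation is self-contained in $E_1$ and exhibits the Koszul signs explicitly, but leaves the norm control on finite sums $\sum_\chi a_\chi$ (which is genuinely needed, since a $\ast$-homomorphism on a dense $\ast$-subalgebra need not be contractive for the $C^*$-norm) to the reader or to the reference; your unitary $W$ settles exactly that point, and as a by-product reproves adjointability and equivariance with no sign bookkeeping, at the modest cost of invoking the invariant global frame for $\bs{\$}_G$ -- an ingredient the paper itself uses repeatedly later, so nothing outside its framework. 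Your covariance check and the identification $L^2(\bs{\$}_G)_{\psi^{-1}}=\C\psi\wh{\otimes}\Sigma$ are both correct as stated.
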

\begin{proof}
Suppose $a_\chi\in A_\chi$ and $x=\sum_{\nu\in\Char(G)}x_\nu\in X$, where $x_\nu\in A_\nu$ for all $\nu\in\Char(G)$. Then
$$\|a_\chi x\|^2=\sum_{\phi\in\Char(G)}\|a_\chi x_\nu\|^2\leq\|a_\chi\|^2\|x\|^2$$
by Lemma \ref{lem:innerproductsum}, so $a_\chi x$ is a well-defined element of $x$.

Since $\alpha_g(a_\chi^*)=\alpha_g(a_\chi)^*=\ol{\chi(g)}a_\chi^*=\chi^{-1}(g)a_\chi^*$, it follows that $a_\chi^*\in A_{\chi^{-1}}$. Hence if $a_\chi\in A_\chi$ and $x_i\wh{\otimes} s_i\in E_1$, $i=1,2$, each of homogeneous degree, then
\begin{align*}
\big(x_1\wh{\otimes} s_1\big|a_\chi\cdot(x_2\wh{\otimes} s_2)\big)_{A^G\wh{\otimes}\bC}&=\big(x_1\wh{\otimes} s_1\big|a_\chi x_2\wh{\otimes} \chi s_2\big)_{A^G\wh{\otimes}\bC}\\
&=(-1)^{\deg s_1\cdot(\deg x_1+\deg a_\chi+\deg x_2)}(x_1|a_\chi x_2)_{A^G}\wh{\otimes}(s_1|\chi s_2)_{\bC}\\
&=(-1)^{\deg s_1\cdot(\deg x_1+\deg a_\chi+\deg x_2)}(a_\chi^*x_1|x_2)_{A^G}\wh{\otimes}(\chi^{-1} s_1|s_2)_{\bC}\\
&=\big(a_\chi^*x_1\wh{\otimes}\chi^{-1} s_1\big|x_2\wh{\otimes}s_2\big)_{A^G\wh{\otimes}\bC}=\big(a_\chi^*\cdot(x_1\wh{\otimes} s_1)\big|x_2\wh{\otimes} s_2\big)_{A^G\wh{\otimes}\bC}.
\end{align*}
So the action of $\bigoplus_{\chi}A_\chi$ on $E_1$ defines a 
$\ast$-homomorphism 
$\bigoplus_{\chi}A_\chi\rightarrow\End_{A^G\wh{\otimes}\bC}(E_1)$, 
which extends to a $\ast$-homomorphism 
$\phi:A\rightarrow\End_{A^G\wh{\otimes}\bC}(E_1)$. 
That $\phi$ is $\Z_2$-graded and equivariant is obvious.
\end{proof}
\begin{defn}
Let $\D_G:\dom(\D_G)\subset L^2(\bs{\$}_G)\rightarrow L^2(\bs{\$}_G)$ 
be the spin Dirac operator on $G$, and let $\bc$ be the generator of 
$\bC$. Define a closed operator 
$\D_1:\dom(\D_1)\subset E_1\rightarrow E_1$ 
initially on the linear span of elements of the form 
$x\wh{\otimes}(s\wh{\otimes} z)$, where $x\in X$, 
$s\in\dom(\D_G)$ and $z\in\bC$ are of homogeneous degree, by
$$
\D_1(x\wh{\otimes}(s\wh{\otimes}z)):=(-1)^{\deg x}x\wh{\otimes}(\D_Gs\wh{\otimes}\bc z),
$$
and then take the operator closure. Since $\D_G$ is equivariant, this is well-defined.
\end{defn}
\begin{prop}
\label{prop:SSA3}
The triple $(\oplus_\chi A_\chi,(E_1)_{A^G\wh{\otimes}\bC},\D_1)$ is an 
unbounded equivariant Kasparov $A$-$A^G\wh{\otimes}\bC$-module 
if and only if the action of $G$ on $A$ satisfies the spectral subspace assumption.
When the action of $G$ on $A$ satisfies the spectral subspace condition, 
we call 
the Kasparov module $(\oplus_\chi A_\chi,(E_1)_{A^G\wh{\otimes}\bC},\D_1)$ the \emph{left-hand module}.
\end{prop}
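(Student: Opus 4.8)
The plan is to verify conditions (1)--(4) of Definition \ref{defn:kasmodule} for the triple $(\oplus_\chi A_\chi, (E_1)_{A^G\wh{\otimes}\bC}, \D_1)$, and to identify precisely which of them requires the spectral subspace assumption (SSA). Conditions (1) and (4) are essentially formal: condition (1) follows from the definition of $V_1$, the equivariance of $\phi$ established in the preceding proposition, and Lemma \ref{lem:prodesp} together with the fact that the $G$-action on $A^G\wh{\otimes}\bC$ is trivial; condition (4) follows because $V_{1,g}$ acts only on the $X$-leg while $\D_1$ acts (up to the grading sign) only on the $L^2(\bs{\$}_G)\wh{\otimes}\bC$-leg, and $\D_G$ is $G$-equivariant by the cited result of \cite{Slebarski1985}, so $V_{1,g}$ preserves $\dom(\D_1)$ and commutes with $\D_1$. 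Self-adjointness and regularity of $\D_1$ on $E_1$ follow from self-adjointness and regularity of $\D_G$ on $L^2(\bs{\$}_G)$ after tensoring with $X$ and restricting to the invariant submodule, using that $E_1$ is a complemented (invariant) submodule of the external tensor product and $\D_1$ is the restriction of $\id_X\wh{\otimes}(\D_G\wh{\otimes}\bc)$ up to the grading automorphism.

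Next I would handle condition (2). For $a_\chi\in A_\chi$ one computes directly that $\phi(a_\chi)$ maps $x\wh{\otimes}(s\wh{\otimes}z)$ to $a_\chi x\wh{\otimes}(\chi s\wh{\otimes}z)$, and since $\chi\in C^\infty(G)$ preserves $\dom(\D_G)$, we get $\phi(a_\chi)\dom(\D_1)\subset\dom(\D_1)$, with
$$
[\D_1,\phi(a_\chi)](x\wh{\otimes}(s\wh{\otimes}z)) = (-1)^{\deg x} a_\chi x\wh{\otimes}\big([\D_G,\chi]s\wh{\otimes}\bc z\big),
$$
and $[\D_G,\chi]=c(d\chi)$ is bounded because $G$ is compact and $\chi$ is smooth. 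So the graded commutator is bounded on the dense domain, hence extends boundedly, for each $a\in\oplus_\chi A_\chi$. This step does not need SSA.

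The crux is condition (3): $\phi(a)(1+\D_1^2)^{-1/2}$ must be a compact endomorphism of $E_1$ for all $a\in\oplus_\chi A_\chi$. By linearity it suffices to treat $a=a_\chi\in A_\chi$. Here $(1+\D_1^2)^{-1/2}$ is the restriction to $E_1$ of $\id_X\wh{\otimes}(1+\D_G^2)^{-1/2}$, and $(1+\D_G^2)^{-1/2}$ is a compact operator on $L^2(\bs{\$}_G)$ since $G$ is a closed manifold; decomposing spectrally, $(1+\D_G^2)^{-1/2} = \sum_{\chi} (1+\D_G^2)^{-1/2} p_\chi'$ in an appropriate sense, with the $\chi$-summands decaying. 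The issue is that tensoring a compact operator on a Hilbert space with the identity on a Hilbert module $X$ does \emph{not} in general yield a compact endomorphism; one needs the ``other leg'' to be hit by something compact too. This is exactly where $\phi(a_\chi)$ enters: $\phi(a_\chi)$ involves left multiplication by $a_\chi$ on $X$, and the SSA is precisely the condition ensuring that $\Theta_{x,x}=x(x|{-})_{A^G}$-type operators built from the spectral subspaces $A_\chi$ generate compact endomorphisms of $X$ — equivalently, that $\ol{A_\chi A_\chi^*}$ being complemented in $A^G$ lets one write left multiplication by elements of $A_\chi$ in terms of finite-rank-type operators on $X\wh{\otimes}L^2(\bs{\$}_G)_\chi$, restricted to $E_1$. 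Concretely, I would use Lemma \ref{lem:innerproductsum} to expand $\phi(a_\chi)(1+\D_1^2)^{-1/2}$ as a norm-convergent sum over $\nu\in\Char(G)$ of operators supported on $X_\nu\wh{\otimes}L^2(\bs{\$}_G)_{\nu}$, show each such operator is a finite-rank endomorphism of $E_1$ \emph{using} that SSA makes $a_\chi$ act as a compact endomorphism between the relevant submodules (this is the content of \cite{CNNR}, where SSA is shown equivalent to the relevant module being a direct sum / the $\Phi_\chi$ being suitably compact), and use the decay of $(1+\D_G^2)^{-1/2}p_\nu'$ to get norm convergence. For the converse direction — that without SSA condition (3) fails — I would argue that compactness of $\phi(a_\chi)(1+\D_1^2)^{-1/2}$ for all $\chi$ forces the module $X$ to be suitably ``full'' on each spectral subspace, which reverse-engineers to the complementation statement; this is the argument of \cite{CNNR}*{Lemma 2.4} adapted from $G=\T$ to general compact abelian $G$.

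The main obstacle I expect is precisely this compactness argument in condition (3): carefully setting up the norm-convergent decomposition over characters, controlling the tails using the growth of the Dirac spectrum on $G$ (which behaves like the lattice $\Char(G)\cong\Z^{\dim G}$), and invoking the SSA in the exact form needed to conclude that each finite piece is a genuine compact endomorphism of the submodule $E_1$ rather than merely of the ambient external tensor product. The ``if and only if'' also requires care in the reverse direction; I would lean on the fact, established in \cite{CNNR} for the circle and extendable verbatim, that SSA is equivalent to each $\Phi_\chi$ being an adjointable projection onto a complemented submodule whose range is a (full corner) Hilbert $\ol{A_\chi A_\chi^*}$-module — i.e. Lemma \ref{lem:innerproductsum} already encodes the ``if'' direction, and one shows the inner-product-sum formula forces complementation if condition (3) is to hold.
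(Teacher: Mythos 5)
Your route is essentially the paper's: the paper proves Proposition \ref{prop:SSA3} by citing \cite{CNNR}*{Proposition 2.9} (and the lemmas preceding it) for $G=\T$ and observing that the general compact abelian case needs only minor modifications as in \cite{CGRS2}*{Chapter 5}. Your verification of conditions (1), (2), (4) of Definition \ref{defn:kasmodule} and your identification of the compactness condition (3) as the only place the SSA enters is exactly the content being delegated there, and your mechanism is right in spirit: on $E_1\cong\ol{\bigoplus}_\nu A_\nu\wh{\otimes}(L^2(\bs{\$}_G)_{\nu^{-1}}\wh{\otimes}\bC)$ the resolvent acts blockwise with norms decaying as $|\nu|\to\infty$, $\phi(a_\chi)$ sends the $\nu$-block to the $\chi\nu$-block by left multiplication, and everything reduces to left multiplication $A_\nu\to A_{\chi\nu}$ being a compact module map (compact, not finite rank, in general), with norm convergence of the block sum supplied by the spectral growth of $\D_G$.

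One claim in your closing paragraph is wrong and should be corrected, though it does not derail the argument since you delegate the key equivalence to \cite{CNNR} anyway. The SSA is \emph{not} equivalent to the $\Phi_\chi$ being adjointable projections onto complemented submodules of $X$: Lemma \ref{lem:innerproductsum} establishes that unconditionally, so it cannot ``encode the if direction'' (otherwise the proposition would hold without hypothesis). The SSA is the complementation of the ideal $\ol{A_\chi A_\chi^*}$ inside $A^G$, and its role is the following: writing $A^G=\ol{A_\nu A_\nu^*}\oplus J$ with $J A_\nu=\{0\}$, every element of $A^G$ acts on $A_\nu$ through $\mathcal{K}(A_\nu)=\ol{A_\nu A_\nu^*}$, hence compactly, and then $a_\chi:A_\nu\to A_{\chi\nu}$ is compact because $a_\chi^*a_\chi\in A^G$ acts compactly on $A_\nu$. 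Conversely, if condition (3) holds then in particular each $a\in A^G$ acts compactly on each $A_\nu$, so $a=c+(a-c)$ with $c\in\ol{A_\nu A_\nu^*}$ and $a-c$ annihilating $A_\nu$, giving $A^G=\ol{A_\nu A_\nu^*}\oplus\mathrm{Ann}\big(\ol{A_\nu A_\nu^*}\big)$, which is the SSA. With this correction your outline is sound and coincides with the argument the paper cites.
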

\begin{proof}
See \cite{CNNR}*{Proposition 2.9} and the preceding lemmas for a proof
when $G=\T$. 
The general case requires only minor modifications, as in \cite{CGRS2}*{Chapter 5}.
\end{proof}
We henceforth assume that the action of $G$ on $A$ satisfies the spectral subspace assumption.
\subsection{The middle module.}
Recall that $G$ is a compact abelian Lie group, 
equipped with the 
trivial spinor bundle $\bs{\$}_G$, and $(\A,\h,\D)$ is 
an even $G$-equivariant spectral triple for a $\Z_2$-graded separable $C^*$-algebra $A$.
We will now construct the middle module, whose job is to correct for 
the spinor bundle dimensions between the left hand module and $(\A,\h,\D)$. 

Let $\Gamma(\bs{\$}_G)$ denote the continuous sections of 
$\bs{\$}_G$, which is a right Hilbert $C(G)$-module 
with the pointwise inner product on $\bs{\$}_G$. We also let 
$\Gamma(\Cl(G))$ denote the $C^*$-algebra of continuous sections 
of the Clifford bundle over $G$. 
Let $\rho:\Gamma(\Cl(G))\rightarrow\End_{C(G)}(\Gamma(\bs{\$}_G))$ be the 
Clifford representation, which is a $\ast$-homomorphism. 
When $G$ is even dimensional, $\rho$ is a 
$\Z_2$-graded $\ast$-homomorphism, 
but this is not the case when $G$ is odd dimensional. 
Both $\Gamma(\Cl(G))$ and $\Gamma(\bs{\$}_G)$ carry an action of $G$, 
and we denote their respective fixed point sets by $\Gamma(\Cl(G))^G$ 
and $\Gamma(\bs{\$}_G)^G$. The fixed point algebra 
$\Gamma(\Cl(G))^G$ is a finite dimensional $C^*$-algebra, with
$$
\Gamma(\Cl(G))^G\cong\Cl_n\cong
\left\{\begin{array}{ll}M_{2^{\dim G/2}}(\C)&\text{ if }G\text{ is even dimensional}\\M_{2^{(\dim G-1)/2}}(\C)\oplus M_{2^{(\dim G-1)/2}}(\C)&\text{ if }G\text{ is odd dimensional.}\end{array}\right.
$$
The fixed sections $\Gamma(\bs{\$}_G)^G$ form a finite dimensional vector space, with 
$$
\Gamma(\bs{\$}_G)^G\cong\left\{\begin{array}{ll}\C^{2^{\dim G/2}}&\text{ if }G\text{ is even dimensional}\\
\C^{2^{(\dim G-1)/2}}&\text{ if }G\text{ is odd dimensional.}\end{array}\right.
$$
Let $\bc$ be the generator of the $C^*$-algebra $\bC$, as in Definition \ref{defn:bc}. The 
$\Z_2$-graded $\ast$-homo\-morphism 
$\wt{\rho}:\Gamma(\Cl(G))^G\rightarrow\End_{\bC}(\Gamma(\bs{\$}_G)^G\wh{\otimes}\bC)$ 
defined on elements of homogeneous degree by 
\begin{equation}
\wt{\rho}(s)(w\wh{\otimes}z)=\rho(s)w\wh{\otimes}\bc^{\deg s}z,
\label{eq:cliff-iso}
\end{equation}
is an isomorphism.

The isomorphism \eqref{eq:cliff-iso} implies that 
$\Gamma(\bs{\$}_G)^G\wh{\otimes}\bC$ is a 
$\Z_2$-graded Morita equivalence bimodule between 
$\Gamma(\Cl(G))^G$ and $\bC$, where the left inner product is defined by
$$
\wt{\rho}\big(\,\!_{\Gamma(\Cl(G))^G}(w_1|w_2)\big)w_3=w_1(w_2|w_3)_{\bC}.
$$
Hence the conjugate module $(\Gamma(\bs{\$}_G)^G\wh{\otimes}\bC)^*$, \cite{RW}*{p. 49} is a 
$\Z_2$-graded Morita equivalence bimodule between $\bC$ and $\Gamma(\Cl(G))^G$.

The fixed point algebra $A^G$ is a $\Z_2$-graded right Hilbert 
module over itself, and left multiplication on itself defines a 
$\Z_2$-graded $\ast$-homomorphism $A^G\rightarrow \End_{A^G}(A^G)$. 

The external tensor product $A^G\wh{\otimes}(\Gamma(\bs{\$}_G)^G\wh{\otimes}\bC)^*$ 
is a $\Z_2$-graded right Hilbert $A^G\wh{\otimes}\Gamma(\Cl(G))^G$-module, 
which carries a representation 
$A^G\wh{\otimes}\bC\rightarrow\End_{A^G\wh{\otimes}\Gamma(\Cl(G))^G}(A^G\wh{\otimes}(\Gamma(\bs{\$}_G)^G\wh{\otimes}\bC)^*)$. 
Since $A^G\wh{\otimes}(\Gamma(\bs{\$}_G)^G\wh{\otimes}\bC)^*$ 
is a Morita equivalence bimodule, the triple 
$$
(A^G\wh{\otimes}\bC,(A^G\wh{\otimes}(\Gamma(\bs{\$}_G)^G\wh{\otimes}\bC)^*)_{A^G\wh{\otimes}\Gamma(\Cl(G))^G},0)
$$
is an (unbounded) equivariant Kasparov 
$A^G\wh{\otimes}\bC$-$A^G\wh{\otimes}\Gamma(\Cl(G))^G$-module, 
where the $C^*$-algebras and the Hilbert module carry the trivial action by $G$. We
call this module the \emph{middle module}.
\subsection{The right-hand module.}
To define the right-hand module we require greater compatibility between the action 
$\alpha$ of $G$ on $A$
and $\A\subset A$ than we have assumed so far. We say that $\A$ is \emph{$\alpha$-compatible} if
$$
\A_\chi:=\A\cap A_\chi\mbox{ is dense in }A_\chi\mbox{ for all }\chi\in\Char(G).
$$
Compatibility is implied by $\alpha$ restricting to a continuous action on $\A$ for some 
finer complete topology on $\A$.
\begin{defn}\label{defn:hchi}
For each $\chi\in\Char(G)$, let $\h_\chi=\{\xi\in\h:V_g\xi=\chi(g)\xi\}$ be the 
spectral subspace corresponding to $\chi$,  
and define an operator $\D_\chi:\dom(\D)\cap\h_\chi\subset\h_\chi\rightarrow\h_\chi$ by 
$\D_\chi\xi:=\D\xi$. The Hilbert space $\h_\chi$ inherits the $\Z_2$-grading of $\h$.
\end{defn}
\begin{lemma}\label{lem:fixedtriple}
Suppose that $\A$ is $\alpha$-compatible.
Let $\A^G$ be the fixed point algebra of $\A$. Then for each 
$\chi\in\Char(G)$, $(\A^G,\h_\chi,\D_\chi)$ is an even 
equivariant spectral triple for $A^G$, where $\h_\chi$ inherits the action of $G$ on $\h$.
\end{lemma}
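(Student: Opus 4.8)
The plan is to verify the four axioms in Definition~\ref{defn:kasmodule} for the triple $(\A^G,\h_\chi,\D_\chi)$, taking as input the fact that $(\A,\h,\D)$ is itself a $G$-equivariant spectral triple and that $\A$ is $\alpha$-compatible. The representation of $A^G$ on $\h_\chi$ is just the restriction of the representation on $\h$: the key preliminary observation is that since $a\in A^G$ commutes with the action, $a$ maps each spectral subspace $\h_\nu$ into itself, and in particular $a\cdot\h_\chi\subseteq\h_\chi$; likewise the $\Z_2$-grading operator on $\h$ commutes with $V$, so it restricts to a grading of $\h_\chi$ for which the representation is even. Thus we get a genuine $\Z_2$-graded $\ast$-homomorphism $A^G\to\End(\h_\chi)=B(\h_\chi)$, and since $\A^G$ is dense in $A^G$ (this uses $\alpha$-compatibility, or even just continuity of $\Phi_1$) we have the required dense subalgebra.

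Next I would treat $\D_\chi$. The spectral projection $P_\chi$ onto $\h_\chi$ is $P_\chi=\int_G\chi^{-1}(g)V_g\,dg$, and since $[\D,V_g]=0$ (axiom (4) for the original triple) one has $P_\chi\dom(\D)\subseteq\dom(\D)$ and $P_\chi$ commutes with $\D$ on $\dom(\D)$. From this it follows in the standard way that $\D_\chi:=\D|_{\dom(\D)\cap\h_\chi}$ is self-adjoint on $\h_\chi$: it is symmetric, and $(\D\pm i)$ being surjective on $\h$ implies $(\D_\chi\pm i)$ is surjective on $\h_\chi$ because $(\D\pm i)^{-1}$ commutes with $P_\chi$. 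Oddness of $\D_\chi$ with respect to the restricted grading is inherited from oddness of $\D$. For axiom (2), if $a\in\A^G$ then $a\in\A$, so $a\dom(\D)\subseteq\dom(\D)$; intersecting with $\h_\chi$ (which $a$ preserves) gives $a(\dom(\D)\cap\h_\chi)\subseteq\dom(\D)\cap\h_\chi$, and $[\D_\chi,a]_\pm=[\D,a]_\pm|_{\h_\chi}$ is bounded since $[\D,a]_\pm$ is.

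For axiom (3), the compactness of $a(1+\D_\chi^2)^{-1/2}$ on $\h_\chi$ for $a\in\A^G$: since $P_\chi$ commutes with $\D$, one has $(1+\D_\chi^2)^{-1/2}=(1+\D^2)^{-1/2}P_\chi=P_\chi(1+\D^2)^{-1/2}$ as operators on $\h_\chi\subseteq\h$, hence $a(1+\D_\chi^2)^{-1/2}=\big(a(1+\D^2)^{-1/2}\big)P_\chi$ is the restriction of a compact operator on $\h$ to the (co)invariant subspace $\h_\chi$, so it is compact on $\h_\chi$. Axiom (4) is handled by noting that $G$ acts on $\h_\chi$ by the scalar $\chi(g)$, so $V_g|_{\h_\chi}=\chi(g)\,\mathrm{id}$ clearly preserves $\dom(\D_\chi)$ and commutes with $\D_\chi$; the compatibility conditions in (1) for the $\C$-trivially-graded-coefficient case reduce to $V_g$ being unitary and implementing the (trivial, since $G$ acts trivially on $\C$) action, together with $V_g a V_g^{-1}=\alpha_g(a)=a$ for $a\in A^G$, which holds. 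The only genuinely delicate point is the self-adjointness and regularity of $\D_\chi$ — i.e.\ confirming that restriction to a spectral subspace does not destroy essential self-adjointness of the operator closure — but this follows cleanly from the commutation of $\D$ with the spectral projection $P_\chi$, so I expect no real obstacle; the rest is routine bookkeeping with the grading and the equivariance identities.
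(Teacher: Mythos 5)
Your proposal is correct, and its overall shape is the same as the paper's: restrict the representation, the grading and $\D$ to $\h_\chi$, and verify the axioms of Definition~\ref{defn:kasmodule} using the fact that everything built from $\D$ by functional calculus commutes with the $G$-action, hence with the projection $P_\chi$; $\alpha$-compatibility enters exactly where you put it, namely to guarantee that $\A^G$ is dense in $A^G$ (your parenthetical alternative via continuity of $\Phi_1$ alone would not do, since $\Phi_1$ need not map $\A$ into $\A$, but the compatibility hypothesis is the right and sufficient reason). The one step where you diverge is the self-adjointness of $\D_\chi$: you use the basic deficiency/resolvent criterion, observing that $(\D\pm i)^{-1}$ commutes with $P_\chi$ so that $\D_\chi\pm i$ is surjective on $\h_\chi$, whereas the paper passes to the bounded transform, notes that $F_\chi:=\D(1+\D^2)^{-1/2}|_{\h_\chi}=\D_\chi(1+\D_\chi^2)^{-1/2}$ is bounded and self-adjoint, and invokes \cite{Lance}*{Theorem 10.4} to recover self-adjointness of $\D_\chi=F_\chi(1-F_\chi^2)^{-1/2}$. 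Your route is the more elementary Hilbert-space argument and has the minor advantage of establishing self-adjointness before any identification of $(1+\D_\chi^2)^{-1/2}$ with a restriction; the paper's route is the one that generalises verbatim to Hilbert modules (where regularity, not just self-adjointness, is the issue), which is why the authors phrase it through the bounded transform. Both are complete proofs of the lemma.
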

\begin{proof}
Since $G$ acts on $\h$ unitarily, there is an orthogonal 
decomposition $\h=\bigoplus_{\chi\in\Char(G)}\h_\chi$. 
The density of $\dom(\D)$ in $\h$ thus implies that 
$\dom(\D_\chi)$ is dense in $\h_\chi$ for all $\chi\in\Char(G)$.

The operator $(1+\D^2)^{-1/2}\in B(\h)$ is self-adjoint, and since 
$\D$ commutes with the action of $G$, so too does $(1+\D^2)^{-1/2}$. 
Hence $(1+\D^2)^{-1/2}|_{\h_\chi}$ is a bounded self-adjoint operator 
on $\h_\chi$, and $(1+\D^2)^{-1/2}|_{\h_\chi}=(1+\D_\chi^2)^{-1/2}$ 
for all $\chi\in\Char(G)$. Hence
$$
F_\chi:=\D(1+\D^2)^{-1/2}|_{\h_\chi}=\D_\chi(1+\D_\chi^2)^{-1/2}
$$
is also a bounded self-adjoint operator on $\h_\chi$. Since 
$\D_\chi=F_\chi(1-F_\chi^2)^{-1/2}$, it follows from 
\cite{Lance}*{Theorem 10.4} that $\D_\chi$ is a self-adjoint operator on $\h_\chi$.

Since $[\D_\chi,a]=[\D,a]|_{\h_\chi}$ and 
$a(1+\D_\chi^2)^{-1/2}=a(1+\D^2)^{-1/2}|_{\h_\chi}$ for all 
$a\in\A^G$, it follows that $(\A^G,\h_\chi,\D_\chi)$ satisfies 
the conditions of Definition \ref{defn:kasmodule}, and hence 
$(\A^G,\h_\chi,\D_\chi)$ is an even equivariant spectral triple.
\end{proof}
We wish to use the operator $\D_\zeta$ to construct our final Kasparov module, 
for some fixed $\zeta\in\Char(G)$. However, the middle module is an 
unbounded Kasparov $A^G$-$A^G\wh{\otimes}\Gamma(\Cl(G))^G$-module, 
whereas $(\A^G,\h_\zeta,\D_\zeta)$ is an unbounded Kasparov $A^G$-$\C$-module. 
Hence we need a representation of $\Gamma(\Cl(G))^G$ on $\h_\zeta$, 
which will define an action of $A^G\wh{\otimes}\Gamma(\Cl(G))^G$ on $\h_\zeta$. 
The conditions we impose below on the action and the character $\zeta$
ensure that we obtain an even 
spectral triple for $A^G\wh{\otimes}\Gamma(\Cl(G))^G$, and 
in addition that
Kucerovsky's connection criteria is satisfied (Proposition \ref{prop:connection}).

Simple examples show that $\h_\chi$ may be trivial for any given $\chi\in\Char(G)$, 
including the trivial character $\chi(g)=1$. 
We therefore impose the condition $\ol{A\h_\zeta}=\h$ 
on the character $\zeta$ in order to construct the right-hand module. 
Choosing $\zeta$ in this way allows us to recover the original Hilbert space $\h$ from the three modules. 
\begin{rem}
Even if $\ol{A\h_\chi}=\h$ for all $\chi\in\Char(G)$, 
the positivity criterion may be satisfied for some choices of $\zeta$ but not for others. For an
example see
Section \ref{sec:dirac-ess2}.
\end{rem}
\begin{defn}
\label{defn:theta}
Suppose that $\A$ is $\alpha$-compatible.
Let $\zeta\in\Char(G)$ be such that $\ol{A\h_\zeta}=\h$, and let $\eta:\Gamma(\Cl(G))^G\rightarrow B(\h)$ be a unital, equivariant $\Z_2$-graded $\ast$-homomorphism such that\\
1) $[\eta(s),a]_\pm=0$ for all $s\in\Gamma(\Cl(G))^G$ and $a\in A^G$, and\\
2) $a\eta(s)\cdot\dom(\D_\zeta)\subset\dom(\D)$ and $[\D,\eta(s)]_\pm aP_\zeta$ 
is bounded on $\h$ for all $a\in\oplus_\chi\A_\chi$ and \\$s\in\Gamma(\Cl(G))^G$, where 
$P_\zeta\in B(\h)$ is the orthogonal projection onto $\h_\zeta$. 
\end{defn}

We define a $\Z_2$-graded $\ast$-homomorphism 
$A^G\wh{\otimes}\Gamma(\Cl(G))^G\rightarrow B(\h_\zeta)$ 
by $(a\wh{\otimes}s)\cdot\xi:=a\eta(s)\xi$. 
If $\A$ is $\alpha$-compatible, the conditions on $\eta$ and Lemma \ref{lem:fixedtriple} 
ensure that $(\A^G\wh{\otimes}\Gamma(\Cl(G))^G,\h_\zeta,\D_\zeta)$ 
is an even equivariant spectral triple for $A^G$, which we call the \emph{right-hand module}.
\begin{rem}
Condition 2) of Definition \ref{defn:theta} is stronger than necessary to 
ensure that we obtain an equivariant spectral triple for 
$A^G\wh{\otimes}\Gamma(\Cl(G))^G$, but this stronger 
condition is sufficient to prove that Kucerovsky's connection criteria is satisfied.
\end{rem}
\section{The Kasparov product of the left-hand, middle and right-hand modules.}
Recall that $G$ is a compact abelian Lie group, equipped with the normalised Haar measure and a trivial spinor bundle $\bs{\$}_G$, and $(\A,\h,\D)$ is an even $G$-equivariant spectral triple for a $\Z_2$-graded separable $C^*$-algebra $A$. Let $\zeta\in\Char(G)$ and $\eta:\Gamma(\Cl(G))^G\rightarrow B(\h)$ satisfy the conditions of Definition \ref{defn:theta}, so in particular $\A$ is $\alpha$-compatible.

The next result can be proved with a straightforward application of 
Kucerovsky's criteria,  \cite{Kucerovsky}*{Theorem 13}. 
\begin{prop}
\label{prop:leftmiddle}
The Kasparov product of the left-hand and middle modules is 
represented by 
$(\oplus_\chi A_\chi,(E_1\wh{\otimes}_{A^G\wh{\otimes}\bC}(A^G\wh{\otimes}(\Gamma(\bs{\$}_G)^G\wh{\otimes}\bC)^*))_{A^G\wh{\otimes}\Gamma(\Cl(G))^G},\D_1\wh{\otimes}1)$.
\end{prop}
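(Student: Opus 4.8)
The plan is to verify Kucerovsky's hypotheses (\cite{Kucerovsky}*{Theorem 13}) for the triple
$$
\bigl(\oplus_\chi A_\chi,\ (E_1\wh{\otimes}_{A^G\wh{\otimes}\bC}(A^G\wh{\otimes}(\Gamma(\bs{\$}_G)^G\wh{\otimes}\bC)^*))_{A^G\wh{\otimes}\Gamma(\Cl(G))^G},\ \D_1\wh{\otimes}1\bigr),
$$
showing it is an unbounded representative of the internal product of the left-hand module $(\oplus_\chi A_\chi,(E_1)_{A^G\wh{\otimes}\bC},\D_1)$ with the middle module $(A^G\wh{\otimes}\bC,((A^G\wh{\otimes}(\Gamma(\bs{\$}_G)^G\wh{\otimes}\bC)^*))_{A^G\wh{\otimes}\Gamma(\Cl(G))^G},0)$. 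First I would observe that, since the middle module has zero operator and is a Morita equivalence bimodule, the tensor product Hilbert module $E_1\wh{\otimes}_{A^G\wh{\otimes}\bC}F$ (writing $F$ for the conjugate module) is unitarily isomorphic to $E_1$ with its coefficients induced up along the Morita equivalence $A^G\wh{\otimes}\bC\to A^G\wh{\otimes}\Gamma(\Cl(G))^G$; in particular it is countably generated and $\Z_2$-graded, the left action of $\oplus_\chi A_\chi$ (coming from the left module) is well-defined and graded, and $V_1\wh{\otimes}1$ gives the required $G$-representation. The operator $\D_1\wh{\otimes}1$ is odd, regular and self-adjoint on the tensor product because $\D_1$ is and tensoring with the (bounded, zero-operator) finite-dimensional factor does not disturb regularity or self-adjointness.

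Then I would check the three Kucerovsky conditions. (i) \emph{Domain/connection condition:} for $e$ in a dense submodule of $F$, the map $T_e:E_1\to E_1\wh{\otimes}F$, $\xi\mapsto\xi\wh{\otimes}e$, must send $\dom(\D_1)$ into $\dom(\D_1\wh{\otimes}1)$ with $\bigl[\binom{0\ \ T_e^*}{T_e\ \ 0},\binom{\D_1\ 0}{0\ \D_1\wh{\otimes}1}\bigr]$ bounded — but here $\D_1\wh{\otimes}1$ acts as $\D_1$ on the first tensor leg and as $0$ on the conjugate-module leg, so the graded commutator with $T_e$ vanishes identically once $e$ is smooth enough for the Morita leg (which is finite-dimensional, so every $e$ is fine). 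Thus the connection condition holds trivially, reflecting that the middle module carries no homological information. (ii) \emph{Commutator (boundedness) condition:} for $a\in\oplus_\chi A_\chi$, one needs $[\D_1\wh{\otimes}1,\phi(a)]$ bounded on $\dom(\D_1\wh{\otimes}1)$; this reduces to the boundedness of $[\D_1,\phi(a)]$ on $E_1$, which is part of Proposition \ref{prop:SSA3} (the left-hand module being a Kasparov module under the SSA). (iii) \emph{Positivity condition:} for $a$ in the dense subalgebra, $\phi(a)^*[\D_1\oplus(\D_1\wh{\otimes}1),\,T_e\text{-type operator}]\phi(a)\geq -C$ modulo compacts — again, since the relevant commutator in (i) is literally zero, the positivity condition is vacuous.

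The only genuinely non-routine point is bookkeeping the $\Z_2$-gradings and the Koszul signs: the conjugate module $F=(\Gamma(\bs{\$}_G)^G\wh{\otimes}\bC)^*$ carries a grading, $\D_1$ is odd, and one must confirm that the isomorphism $E_1\wh{\otimes}_{A^G\wh{\otimes}\bC}F\cong E_1$ (as graded modules with $A^G\wh{\otimes}\Gamma(\Cl(G))^G$-coefficients) intertwines $\D_1\wh{\otimes}1$ with $\D_1$ correctly, including the sign $(-1)^{\deg x}$ built into the definition of $\D_1$ on $X\wh{\otimes}(L^2(\bs{\$}_G)\wh{\otimes}\bC)$, and that the graded left action $a_\chi\cdot(x\wh{\otimes}s\wh{\otimes}\cdot)$ from the left-hand module is still a graded $\ast$-homomorphism into $\End_{A^G\wh{\otimes}\Gamma(\Cl(G))^G}$ of the new module. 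I expect this to be the main obstacle in the write-up, though it is a careful-rather-than-deep task; once the identifications are in place, Kucerovsky's theorem applies and identifies the stated triple with the Kasparov product, since the product class is uniquely determined and our candidate satisfies all three criteria. Finally I would note that the resulting triple is again an unbounded equivariant Kasparov module (Definition \ref{defn:kasmodule}): conditions (1)–(4) there follow from those for the left-hand module together with triviality of the middle operator.
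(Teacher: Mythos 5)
Your overall strategy is exactly the paper's: the paper gives no written proof of this proposition beyond the remark that it follows from a straightforward application of Kucerovsky's criteria, and that is what you attempt. However, as written you apply the criteria backwards, and this affects what you actually verify. In Kucerovsky's Theorem 13 the creation operators are indexed by elements $e$ of the \emph{left} module $E_1$, and $T_e$ maps the \emph{right} module $F:=A^G\wh{\otimes}(\Gamma(\bs{\$}_G)^G\wh{\otimes}\bC)^*$ (carrying the operator $0$) into the product module, the relevant graded commutator being that of $\left(\begin{smallmatrix}\D_1\wh{\otimes}1&0\\0&0\end{smallmatrix}\right)$ with $\left(\begin{smallmatrix}0&T_e\\T_e^*&0\end{smallmatrix}\right)$; this is precisely the pattern the paper itself uses later in Proposition \ref{prop:connection}. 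You instead take $e\in F$ and $T_e:E_1\rightarrow E_1\wh{\otimes}F$, $\xi\mapsto\xi\wh{\otimes}e$, with the diagonal $(\D_1,\D_1\wh{\otimes}1)$; that is not the hypothesis of the theorem, and your assertion that ``the graded commutator vanishes identically'' is an artifact of this swap. In the correct formulation the commutator is bounded but not zero: for $e$ in a suitable dense submodule of $\dom(\D_1)$ (e.g.\ the span of $a_\chi\wh{\otimes}(s\wh{\otimes}z)$ with $s$ smooth) the off-diagonal entries are $T_{\D_1e}$ and, up to sign, $x\wh{\otimes}f\mapsto\phi_F\big((\D_1e|x)_{A^G\wh{\otimes}\bC}\big)f$, each of norm at most $\|\D_1e\|$. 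Likewise your justification of positivity (``vacuous because the commutator in (i) is literally zero'') addresses the wrong quantity: Kucerovsky's positivity condition concerns the anticommutator of the candidate operator with $\D_1\wh{\otimes}1$, and it holds here simply because the candidate operator \emph{is} $\D_1\wh{\otimes}1$, so the anticommutator is $2(\D_1\wh{\otimes}1)^2\geq0$; the domain condition is trivial for the same reason.

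With the criteria stated correctly the verification is indeed routine, and the remaining points you raise are the right ones: your condition (ii) is really part of checking that the candidate is itself an unbounded equivariant Kasparov module (a standing hypothesis of Kucerovsky's theorem rather than one of its three conditions), and it holds because $[\D_1\wh{\otimes}1,\phi(a)\wh{\otimes}1]_\pm=[\D_1,\phi(a)]_\pm\wh{\otimes}1$ is bounded by Proposition \ref{prop:SSA3}, while $\phi(a)(1+(\D_1\wh{\otimes}1)^2)^{-1/2}=\big(\phi(a)(1+\D_1^2)^{-1/2}\big)\wh{\otimes}1$ is compact because the left action of $A^G\wh{\otimes}\bC$ on the Morita equivalence bimodule is by compact endomorphisms. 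Your attention to the grading and Koszul signs, including the sign $(-1)^{\deg x}$ built into $\D_1$, is appropriate but is bookkeeping rather than a gap in the argument.
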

To determine whether the Kasparov product of the left-hand, middle and right-hand modules is represented by 
$(\A,\h,\D)$, we first construct an isomorphism 
$$
\Psi:(E_1\wh{\otimes}_{A^G\wh{\otimes}\bC}(A^G\wh{\otimes}(\Gamma(\bs{\$}_G)^G\wh{\otimes}\bC)^*))\wh{\otimes}_{A^G\wh{\otimes}\Gamma(\Cl(G))^G}\h_\zeta\to \h,
$$ 
which will allow us to use Kucerovsky's criteria,  \cite{Kucerovsky}*{Theorem 13}. 
We would like to define the map $\Psi$ on elements of homogeneous degree by
\begin{align}
\Psi\Big(\big((y\wh{\otimes}u)\wh{\otimes}(a\wh{\otimes}\ol{w})\big)\wh{\otimes}\xi\Big)
:=(-1)^{\deg u\cdot\deg a}\sum_{\chi\in\Char(G)}\Phi_\chi(y)a\eta\big(\,\!_{\Gamma(\Cl(G))^G}(\chi^{-1} p_{\chi^{-1}} u|w)\big)\xi,
\label{eq:sigh}
\end{align}
where $p_\chi\in\End_{\bC}(L^2(\bs{\$}_G)\wh{\otimes}\bC)$ and $\Phi_\chi\in\End_{A^G}(X)$ are the spectral subspace projections of Definition \ref{defn:pchi} and Lemma \ref{lem:innerproductsum} respectively. 

To see that $\Psi$ is well-defined, even on homogeneous elements, we need to know that the sum
over characters converges. This is established by the following lemma.
\begin{lemma}
\label{lem:isometric}
Let $\big((y_i\wh{\otimes}u_i)\wh{\otimes}(a_i\wh{\otimes}\ol{w_i})\big)\wh{\otimes}\xi_i\in(E_1\wh{\otimes}_{A^G\wh{\otimes}\bC}(A^G\wh{\otimes}(\Gamma(\bs{\$}_G)^G\wh{\otimes}\bC)^*))\wh{\otimes}_{A^G\wh{\otimes}\Gamma(\Cl(G))^G}\h_\zeta$ for $i=1,2$. Then
\begin{align*}
&\Ideal{\Psi\Big(\big((y_1\wh{\otimes}u_1)\wh{\otimes}(a_1\wh{\otimes}\ol{w_1})\big)\wh{\otimes}\xi_1\Big),\Psi\Big(\big((y_2\wh{\otimes}u_2)\wh{\otimes}(a_2\wh{\otimes}\ol{w_2})\big)\wh{\otimes}\xi_2\Big)}\\
&\qquad\qquad=\Ideal{\big((y_1\wh{\otimes}u_1)\wh{\otimes}(a_1\wh{\otimes}\ol{w_1})\big)\wh{\otimes}\xi_1,\big((y_2\wh{\otimes}u_2)\wh{\otimes}(a_2\wh{\otimes}\ol{w_2})\big)\wh{\otimes}\xi_2}
\end{align*}
and hence $\Psi$ is a well-defined isometry.
\end{lemma}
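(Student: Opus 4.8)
The plan is to reduce the claimed equality of inner products to a term-by-term identification of the interior tensor product inner products, computed character by character, and then to deduce convergence and well-definedness from that identification. First I would expand the right-hand side using the standard formula for the inner product on an interior tensor product: since $\xi_1,\xi_2\in\h_\zeta$, one has $\Ideal{z_1\wh\otimes\xi_1,z_2\wh\otimes\xi_2}=\Ideal{\xi_1,(z_1|z_2)_{A^G\wh\otimes\Gamma(\Cl(G))^G}\cdot\xi_2}$ where $z_i=(y_i\wh\otimes u_i)\wh\otimes(a_i\wh\otimes\ol{w_i})$, and the action of $A^G\wh\otimes\Gamma(\Cl(G))^G$ on $\h_\zeta$ is $(a\wh\otimes s)\cdot\xi=a\eta(s)\xi$. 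Next I would compute $(z_1|z_2)$ in two stages: using the definition of the conjugate module $(\Gamma(\bs\$_G)^G\wh\otimes\bC)^*$, the inner product $\Ideal{a_1\wh\otimes\ol{w_1},\,a_2\wh\otimes\ol{w_2}}_{A^G\wh\otimes\Gamma(\Cl(G))^G}$ brings in $a_1^*a_2\wh\otimes\,\!_{\Gamma(\Cl(G))^G}(w_1|w_2)$ together with Koszul signs; then the outer interior inner product over $A^G\wh\otimes\bC$ is evaluated by Lemma \ref{lem:prodesp}, which produces the sum $\sum_\chi\Phi_\chi(y_1)^*\Phi_\chi(y_2)\wh\otimes(p_{\chi^{-1}}u_1|p_{\chi^{-1}}u_2)_\bC$. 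Feeding this back through the $\wt\rho$-compatibility of the Morita inner products (the identity $\wt\rho(\,\!_{\Gamma(\Cl(G))^G}(w_1|w_2))w_3=w_1(w_2|w_3)_\bC$, equivalently its adjoint form) converts the $\bC$-valued pairing $(p_{\chi^{-1}}u_i|w_i)$ appearing in $\Psi$ into the $\Gamma(\Cl(G))^G$-valued pairing, and after collecting signs one should land exactly on the right-hand side of the claimed equality.

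In parallel I would expand the left-hand side directly from the definition \eqref{eq:sigh} of $\Psi$: it is a double sum $\sum_{\chi,\nu}$ of terms $\Ideal{\Phi_\chi(y_1)a_1\eta(\cdots_\chi)\xi_1,\Phi_\nu(y_2)a_2\eta(\cdots_\nu)\xi_2}_\h$. The key observation that collapses the double sum to a single one is that $\Phi_\chi(y_1)a_1\in A_\chi\cdot A^G\subset A_\chi$ maps $\h_\zeta$ into $\h_{\chi\zeta}$ (and likewise with $\nu$), and distinct spectral subspaces of $\h$ are orthogonal, so only the diagonal $\chi=\nu$ survives. On the diagonal, $\eta(s)^*=\eta(s^*)$ by the $\ast$-homomorphism property, condition 1) of Definition \ref{defn:theta} lets $\eta$ commute past $A^G$, and $\Phi_\chi(y_1)^*\Phi_\chi(y_2)\in A^G$; together with the defining relation of the left Morita inner product these reassemble the diagonal term into the same expression obtained from the right-hand side. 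Once the two sides are shown to agree termwise on homogeneous elements, positivity of the inner product on the interior tensor product gives finiteness of $\sum_\chi\|\Phi_\chi(y)a\eta(\cdots)\xi\|^2$, hence norm-convergence of the partial sums defining $\Psi$, so $\Psi$ is well-defined; and the established equality of inner products is precisely the statement that $\Psi$ is isometric. By sesquilinearity and density of homogeneous elements this extends to all of the domain.

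The main obstacle I anticipate is purely bookkeeping rather than conceptual: keeping every Koszul sign straight through three nested interior/external tensor products and the conjugate-module flip, since $\deg u$, $\deg a$, $\deg w$, $\deg y$ and the grading of $\bc$ all enter, and the sign $(-1)^{\deg u\cdot\deg a}$ in \eqref{eq:sigh} was presumably chosen precisely to make these cancel. The second delicate point is the justification that the $\chi=\nu$ reduction is legitimate at the level of the (a priori only formally convergent) sums — I would handle this by first working with finite partial sums $\sum_{\chi\in F}$, establishing the identity for each finite $F\subset\Char(G)$, and only then passing to the limit using Lemma \ref{lem:innerproductsum} (strict convergence of $\sum_\chi\Phi_\chi$) and boundedness of the operators involved. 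Everything else — self-adjointness of $\eta(s)$, the Morita relations, the inner product formula of Lemma \ref{lem:prodesp} — is quoted from earlier in the paper.
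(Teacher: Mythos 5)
Your proposal is correct and follows essentially the same route as the paper: expand the balanced tensor-product inner product via Lemma \ref{lem:prodesp} and the conjugate-module structure, convert the $\bC$-valued pairings to $\Gamma(\Cl(G))^G$-valued ones via the Morita relation (the paper's Equation \eqref{eq:carthage}), and match with the expansion of $\Ideal{\Psi\cdot,\Psi\cdot}$, with convergence and well-definedness then following from the termwise identity. The only difference is presentational: you make explicit the orthogonality of the spectral subspaces $\h_{\chi\zeta}$ that collapses the double sum to its diagonal, a step the paper uses implicitly in its final equality.
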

\begin{proof}
Suppose that both elements are of homogeneous degree. Then using Lemma \ref{lem:prodesp},
\begin{align*}
&\Ideal{\big((y_1\wh{\otimes}u_1)\wh{\otimes}(a_1\wh{\otimes}\ol{w_1})\big)\wh{\otimes}\xi_1,\big((y_2\wh{\otimes}u_2)\wh{\otimes}(a_2\wh{\otimes}\ol{w_2})\big)\wh{\otimes}\xi_2}\\
&=(-1)^{\deg u_1\cdot(\deg y_1+\deg y_2)+(\deg u_1+\deg u_2)\cdot\deg a_2+\deg w_1\cdot(\deg a_1+\deg y_1+\deg y_2+\deg a_2)}\\
&\qquad\times\sum_{\chi\in\Char(G)}\Ideal{\xi_1,a_1^*\Phi_\chi(y_1)^*\Phi_\chi(y_2)a_2\eta\big(\,\!_{\Gamma(\Cl(G))^G}(w_1|w_2(p_{\chi^{-1}} u_2|p_{\chi^{-1}} u_1)_{\bC})\big)\xi_2}\\
&=(-1)^{\deg u_1\cdot\deg a_1+\deg u_2\cdot\deg a_2}\\
&\times\sum_{\chi\in\Char(G)}\Ideal{\Phi_\chi(y_1)a_1\eta\big(\,\!_{\Gamma(\Cl(G))^G}(\chi^{-1} p_{\chi^{-1}} u_1|w_1)\big)\xi_1,\Phi_\chi(y_2)a_2\eta\big(\,\!_{\Gamma(\Cl(G))^G}(\chi^{-1} p_{\chi^{-1}} u_2|w_2)\big)\xi_2}\\
&=\Ideal{\Psi\Big(\big((y_1\wh{\otimes}u_1)\wh{\otimes}(a_1\wh{\otimes}\ol{w_1})\big)\wh{\otimes}\xi_1\Big),\Psi\Big(\big((y_2\wh{\otimes}u_2)\wh{\otimes}(a_2\wh{\otimes}\ol{w_2})\big)\wh{\otimes}\xi_2\Big)}.
\end{align*}
The penultimate line follows from
\begin{align}\label{eq:carthage}
\,\!_{\Gamma(\Cl(G))^G}(w_1|\chi^{-1} p_{\chi^{-1}} u_1)\,\!_{\Gamma(\Cl(G))^G}(\chi^{-1} p_{\chi^{-1}} u_2|w_2)=\,\!_{\Gamma(\Cl(G))^G}(w_1|w_2(p_{\chi^{-1}} u_2|p_{\chi^{-1}} u_1)_{\bC}),
\end{align}
which in turn follows from $(\chi^{-1} p_{\chi^{-1}} u_2|\chi^{-1} p_{\chi^{-1}} u_1)_{\bC}=(p_{\chi^{-1}} u_2|p_{\chi^{-1}} u_1)_{\bC}$.

We have already established that the sum 
$\sum_{\chi\in\Char(G)}\Phi_\chi(y)a\eta\big(\,\!_{\Gamma(\Cl(G))^G}(\chi^{-1} p_{\chi^{-1}} u|w)\big)\xi$ 
converges. It only remains to check that $\Psi$ is 
well-defined with respect to the balanced 
tensor products, which is a straightforward exercise.
\end{proof}
\begin{prop}
The map $\Psi$ is a unitary, equivariant, $\Z_2$-graded, $A$-linear isomorphism. The inverse
$$
\Psi^{-1}:\h\rightarrow(E_1\wh{\otimes}_{A^G\wh{\otimes}\bC}(A^G\wh{\otimes}(\Gamma(\bs{\$}_G)^G\wh{\otimes}\bC)^*))\wh{\otimes}_{A^G\wh{\otimes}\Gamma(\Cl(G))^G}\h_\zeta
$$
is defined as follows. Let $(x_j)_{j=1}^n$ be a $G$-invariant global 
orthonormal frame for $\bs{\$}_G$,  and let $(\phi_\ell)_{\ell=1}^\infty$ be an approximate identity for $A^G$ of homogeneous degree zero. For $\xi\in\h$, choose sequences $(a_k)_{k=1}^\infty\subset A$ and $(\xi_k)_{k=1}^\infty\subset\h_\zeta$ such that $a_k\xi_k\rightarrow\xi$ as $k\rightarrow\infty$. Then
$$
\Psi^{-1}(\xi):=\sum_{\chi\in\Char(G)}\sum_{j=1}^{n}\lim_{k\rightarrow\infty}\lim_{\ell\rightarrow\infty}\big(\big(\Phi_\chi(a_k)\wh{\otimes}(\chi x_j\wh{\otimes}1)\big)\wh{\otimes}(\phi_\ell\wh{\otimes}\ol{x_j\wh{\otimes}1})\big)\wh{\otimes}\xi_k.
$$
\end{prop}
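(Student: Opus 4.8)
The plan is to verify in turn that $\Psi$ is surjective, that the stated formula for $\Psi^{-1}$ makes sense and is inverse to $\Psi$, and that all the structure maps are respected; isometry is already in hand from Lemma \ref{lem:isometric}, so once surjectivity is established $\Psi$ is automatically unitary. First I would address well-definedness of the candidate inverse: given $\xi\in\h$, the hypothesis $\ol{A\h_\zeta}=\h$ provides $(a_k)\subset A$ and $(\xi_k)\subset\h_\zeta$ with $a_k\xi_k\to\xi$, and since $\bigoplus_\chi A_\chi$ is dense in $A$ we may as well take each $a_k$ to be a finite sum of spectral elements, so that $\Phi_\chi(a_k)$ is nonzero for only finitely many $\chi$ and the outer sum over $\Char(G)$ is finite for each fixed $k$. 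The frame $(x_j)_{j=1}^n$ exists because $\bs{\$}_G$ is the \emph{trivial} flat spinor bundle, and $G$-invariance of the frame is arranged by averaging. The approximate-identity limit in $\ell$ is needed only to land the element $\Phi_\chi(a_k)$ — a priori in $X$, not in $A^G$ — inside the balanced tensor product correctly; one checks using the balancing relations that $(\Phi_\chi(a_k)\wh{\otimes}(\chi x_j\wh{\otimes}1))\wh{\otimes}(\phi_\ell\wh{\otimes}\ol{x_j\wh{\otimes}1})$ stabilises appropriately. The key computation is that applying $\Psi$ to the $k$-th partial sum (after the $\ell$-limit) returns $\sum_\chi\sum_j \Phi_\chi(a_k)\,\eta\big(\,\!_{\Gamma(\Cl(G))^G}(\chi^{-1}p_{\chi^{-1}}(\chi x_j)|x_j)\big)\xi_k$, and here the frame identity $\sum_j \rho\big(\,\!_{\Gamma(\Cl(G))^G}(x_j|x_j)\big)=\id$ together with $\sum_\chi \Phi_\chi = \id$ (Lemma \ref{lem:innerproductsum}) and $\eta$ being unital collapses the double sum to $a_k\xi_k$. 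Hence $\Psi$ applied to the proposed $\Psi^{-1}(\xi)$ gives $\lim_k a_k\xi_k=\xi$, which simultaneously proves surjectivity of $\Psi$, convergence of the defining double limit for $\Psi^{-1}$ (the image sequence is Cauchy in $\h$, and $\Psi$ is isometric, so the preimages are Cauchy), and that $\Psi^{-1}$ is a genuine two-sided inverse.

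Next I would check independence of the choices. The value $\Psi^{-1}(\xi)$ must not depend on the sequences $(a_k),(\xi_k)$, on the approximate identity $(\phi_\ell)$, or on the invariant frame $(x_j)$: independence of the first two follows because $\Psi$ is an isometry and the value is characterised by $\Psi(\Psi^{-1}(\xi))=\xi$; independence of $(\phi_\ell)$ follows from the standard fact that two approximate identities for $A^G$ give the same limit in a Hilbert module; independence of the frame follows because any two invariant orthonormal frames differ by a $G$-invariant unitary-valued function, i.e.\ by an element of $\Gamma(\Cl(G))^G$ (or more precisely the unitary group thereof), which is absorbed by the balancing over $A^G\wh{\otimes}\Gamma(\Cl(G))^G$.

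Finally, the algebraic properties. Equivariance: $\Psi$ intertwines the diagonal action on the source with $V$ on $\h$ — this is a direct check on homogeneous elements using $V_{1,g}$, the trivial action on the middle and on $\h_\zeta$ (recall $\xi_k\in\h_\zeta$ so $V_g\xi_k=\zeta(g)\xi_k$), the equivariance of $\eta$ and of $\rho$, and the phase bookkeeping from \eqref{eq:sigh}. $\Z_2$-gradedness is the statement that $\Psi$ preserves degree, visible from the sign conventions in the inner-product formulas of Lemma \ref{lem:prodesp} and the definition of $\Psi$. $A$-linearity: for $b=\sum_\nu b_\nu\in\bigoplus_\nu A_\nu$, one verifies $\Psi(b\cdot(\,\cdot\,))=b\,\Psi(\,\cdot\,)$ using the module action $b_\nu\cdot(y\wh{\otimes}u)=b_\nu y\wh{\otimes}\nu u$ from the left-hand module, the identity $\Phi_\chi(b_\nu y)=b_\nu\Phi_{\nu^{-1}\chi}(y)$, and a reindexing $\chi\mapsto\nu\chi$ of the character sum; the Clifford arguments $\chi^{-1}p_{\chi^{-1}}$ transform correctly under this shift because $\nu u$ lies in the shifted spectral subspace.

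\textbf{Main obstacle.} I expect the genuinely delicate point to be the convergence and well-definedness of the iterated limit defining $\Psi^{-1}$ — in particular disentangling the order of the $k$- and $\ell$-limits, confirming that the $\ell$-limit (over the approximate identity) exists before the $k$-limit is taken, and that the resulting element genuinely lies in the balanced tensor product rather than merely in an algebraic tensor product. Everything else is bookkeeping with signs, spectral-subspace projections, and the frame identity $\sum_j{}_{\Gamma(\Cl(G))^G}(x_j|x_j)=1$; the substantive input is that $\ol{A\h_\zeta}=\h$, which is exactly what makes $\Psi$ surjective and is the reason this particular $\zeta$ was singled out.
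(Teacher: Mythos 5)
Your proposal is correct and follows essentially the same route as the paper's proof: isometry (hence injectivity) is taken from Lemma \ref{lem:isometric}, $A$-linearity is checked by expanding $b=\sum_\nu\Phi_\nu(b)$ and reindexing the character sum, and the key step is applying $\Psi$ to the partial sums of the candidate inverse, where the frame identity $\sum_j{}_{\Gamma(\Cl(G))^G}(x_j\wh{\otimes}1|x_j\wh{\otimes}1)=1$, unitality of $\eta$ and the approximate identity collapse each $\chi$-term to $P_{\chi\zeta}\xi$, with existence of the limits and convergence of the $\chi$-sum deduced from the isometry and $\sum_\chi P_{\chi\zeta}\xi=\xi$, giving surjectivity and the two-sided inverse. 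Your additional remarks (reduction to finite spectral sums, independence of the choices of sequences, approximate identity and frame) are harmless refinements of the same argument rather than a different method.
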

\begin{proof}
It is immediate that $\Psi$ is equivariant 
and $\Z_2$-graded, and $\Psi$ is an isometry by Lemma \ref{lem:isometric}. 
So it remains  to show that (i) 
$\Psi$ is $A$-linear, and (ii)  $\Psi^{-1}$ is an inverse for $\Psi$.

(i) Let $b\in A$. Then
\begin{align*}
&\Psi\Big(b\cdot\big((y\wh{\otimes}u)\wh{\otimes}(a\wh{\otimes}\ol{w})\big)\wh{\otimes}\xi\Big)=\sum_{\mu\in\Char(G)}\Psi\Big(\big((\Phi_\mu(b)y\wh{\otimes}\mu u)\wh{\otimes}(a\wh{\otimes}\ol{w})\big)\wh{\otimes}\xi\Big)\\
&=(-1)^{\deg u\cdot\deg a}\sum_{\chi,\mu\in\Char(G)}\Phi_\chi(\Phi_\mu(b)y)a\eta\big(\,\!_{\Gamma(\Cl(G))^G}(\chi^{-1} p_{\chi^{-1}}\mu u|w)\big)\xi\\
&=(-1)^{\deg u\cdot\deg a}\sum_{\chi,\mu}\Phi_\mu(b)\Phi_{\chi}(y)a\eta\big(\,\!_{\Gamma(\Cl(G))^G}(\chi^{-1} p_{\chi^{-1}}u|w)\big)\xi=b\Psi\Big(\big((y\wh{\otimes}u)\wh{\otimes}(a\wh{\otimes}\ol{w})\big)\wh{\otimes}\xi\Big),
\end{align*}
so $\Psi$ is $A$-linear.

(ii) We first check that $\Psi^{-1}$ is well-defined, 
which means checking that the limits exist and that the sum converges. 
Suppose $\xi\in\h$, and choose sequences $(a_k)_{k=1}^\infty\subset A$ 
and $(\xi_k)_{k=1}^\infty\subset\h_\zeta$ such that $a_k\xi_k\rightarrow\xi$ 
as $k\rightarrow\infty$, which exist since $\ol{A\h_\zeta}=\h$. 
Since $\sum_{j=1}^n\,\!_{\Gamma(\Cl(G))^G}(x_j\wh{\otimes}1|x_j\wh{\otimes}1)=1$,
\begin{align*}
\Psi\left(\sum_{j=1}^{n}\big(\big(\Phi_\chi(a_k)\wh{\otimes}(\chi x_j\wh{\otimes}1)\big)\wh{\otimes}(\phi_\ell\wh{\otimes}\ol{x_j\wh{\otimes}1})\big)\wh{\otimes}\xi_k\right)&=\sum_{j=1}^{n}\Phi_\chi (a_k)\phi_\ell\eta\big(\,\!_{\Gamma(\Cl(G))^G}(x_j\wh{\otimes}1|x_j\wh{\otimes}1)\big)\xi_k\\
&=\Phi_\chi(a_k)\phi_\ell\xi_k=P_{\chi\zeta}(a_k\phi_\ell\xi_k),
\end{align*}
where $P_{\chi\zeta}\in B(\h)$ is the orthogonal projection onto $\h_{\chi\zeta}$, and
$$\lim_{k\rightarrow\infty}\lim_{\ell\rightarrow\infty}P_{\chi\zeta}(a_k\phi_\ell\xi_k)=\lim_{k\rightarrow\infty}P_{\chi\zeta}(a_k\xi_k)=P_{\chi\zeta}\xi.$$
Since $\Psi$ is an isometry, this establishes that the limits exist.
Moreover,
$$\sum_{\chi\in\Char(G)}P_{\chi\zeta}\xi=\sum_{\chi\in\Char(G)}P_\chi\xi=\xi,$$
so the sum converges. This calculation also shows that $\Psi^{-1}$ is a right-inverse for $\Psi$, so  
that $\Psi$ is surjective. Since $\Psi$ is injective, it follows that $\Psi$ is invertible with inverse $\Psi^{-1}$.
\end{proof}
Now that we have the isomorphism $\Psi$, we can use Kucerovsky's criteria, 
\cite{Kucerovsky}*{Theorem 13}, to determine if $(\A,\h,\D)$ represents 
the Kasparov product of the left-hand, middle and right-hand modules. 
More precisely, $(\A,\h,\D)$ is unitarily equivalent as an unbounded equivariant Kasparov module to 
$(\A,(E_1\wh{\otimes}_{A^G\wh{\otimes}\bC}(A^G\wh{\otimes}(\Gamma(\bs{\$}_G)^G\wh{\otimes}\bC)^*))\wh{\otimes}_{A^G\wh{\otimes}\Gamma(\Cl(G))^G}\h_\zeta,\Psi^{-1}\circ\D\circ\Psi)$, 
and Kucerovsky's criteria may now be applied to determine whether factorisation has been achieved.

\begin{thm}[The criterion for factorisation]
\label{thm:main}
Let $\zeta\in\Char(G)$ and $\eta:\Gamma(\Cl(G))^G\rightarrow B(\h)$ satisfy the conditions of Definition \ref{defn:theta}, so in particular $\A$ is $\alpha$-compatible. 
Let $(x_j)_{j=1}^{n}$ be a $G$-invariant global orthonormal frame for $\bs{\$}_G$, and for each $\chi\in\Char(G)$, let $P_\chi\in B(\h)$ be the orthogonal projection onto $\h_\chi$. 
If there is some $R\in\mathbb{R}$ such that
\begin{align}
&\sum_{j=1}^{n}\Big(\Ideal{\D\xi,\eta\big(\,\!_{\Gamma(\Cl(G))^G}(\chi^{-1}\D_G(\chi x_j)\wh{\otimes}\bc|x_j\wh{\otimes}1)\big)P_{\chi\zeta}\xi}\nonumber\\
&\qquad+\Ideal{\eta\big(\,\!_{\Gamma(\Cl(G))^G}(\chi^{-1}\D_G(\chi x_j)\wh{\otimes}\bc|x_j\wh{\otimes}1)\big)P_{\chi\zeta}\xi,\D\xi}\Big)\geq R\|\xi\|^2
\label{eq:pos}
\end{align}
for all $\chi\in\Char(G)$, $\xi\in\dom(\D)$, then $(\A,\h,\D)$ represents the Kasparov product of left-hand, middle and right-hand modules.
\end{thm}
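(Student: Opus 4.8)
The plan is to verify the hypotheses of Kucerovsky's criterion \cite{Kucerovsky}*{Theorem 13} for the internal Kasparov product of the module of Proposition \ref{prop:leftmiddle} --- the product of the left-hand and middle modules, whose operator is $\D_1\wh{\otimes}1$ --- with the right-hand module $(\A^G\wh{\otimes}\Gamma(\Cl(G))^G,\h_\zeta,\D_\zeta)$. By the paragraph immediately preceding the statement, $(\A,\h,\D)$ is unitarily equivalent, via $\Psi$, to the candidate product triple $(\A,(E_1\wh{\otimes}_{A^G\wh{\otimes}\bC}(A^G\wh{\otimes}(\Gamma(\bs{\$}_G)^G\wh{\otimes}\bC)^*))\wh{\otimes}_{A^G\wh{\otimes}\Gamma(\Cl(G))^G}\h_\zeta,\Psi^{-1}\D\Psi)$, so it suffices to check, after conjugating all operators back to $\h$ by $\Psi$, the three hypotheses of that theorem: the connection condition, the domain condition $\dom(\D)\subseteq\dom((\D_1\wh{\otimes}1)\wh{\otimes}1)$, and the semiboundedness condition. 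The point of transporting everything to $\h$ is that each of these becomes a concrete statement about $\D$, the spectral projections $P_\chi$, the character $\zeta$ and the representation $\eta$; we shall see that the semiboundedness condition is precisely \eqref{eq:pos}.

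For the connection and domain conditions I would first compute, using \eqref{eq:sigh} and the orthogonality of the spectral subspaces of $X$ and of $L^2(\bs{\$}_G)$, the creation operators $T_\xi\colon\h_\zeta\to\h$ for $\xi$ running through the elements $(b\wh{\otimes}(\chi x_j\wh{\otimes}1))\wh{\otimes}(c\wh{\otimes}\ol{x_k\wh{\otimes}1})$ with $b\in\A_\chi$ and $c\in\A^G$ --- which, by $\alpha$-compatibility and the fact that $\Gamma(\bs{\$}_G)^G\wh{\otimes}\bC$ is a cyclic $\Gamma(\Cl(G))^G$-module, span a dense subspace. Each $T_\xi$ turns out to be (up to a sign) the operator $\psi\mapsto bc\,\eta\big(\,\!_{\Gamma(\Cl(G))^G}(x_j\wh{\otimes}1|x_k\wh{\otimes}1)\big)\psi$, that is, an operator of the form ``$a\,\eta(s)$'' with $a=bc\in\A_\chi\subseteq\oplus_\chi\A_\chi$ and $s\in\Gamma(\Cl(G))^G$, so that condition 2) of Definition \ref{defn:theta} (together with the boundedness of $[\D,a]_\pm$ for $a\in\A$ and the graded commutation of $\eta(\Gamma(\Cl(G))^G)$ with $A^G$ from condition 1)) gives the connection condition, exactly as in the remark following Definition \ref{defn:theta} and the case $G=\T$ of \cite{CNNR} (cf.\ Proposition \ref{prop:connection}). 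For the domain condition, I would use the explicit forms of $\D_1$ and $\Psi^{-1}$ together with the flatness of the frame $(x_j)_{j=1}^n$ --- which makes $\chi^{-1}\D_G(\chi x_j)$ equal to Clifford multiplication by a fixed vector (depending linearly on the weight of $\chi$) applied to $x_j$, and makes the sum over $j$ collapse via $\sum_j\,\!_{\Gamma(\Cl(G))^G}(x_j\wh{\otimes}1|x_j\wh{\otimes}1)=1$ --- to identify the operator $(\D_1\wh{\otimes}1)\wh{\otimes}1$, transported by $\Psi$, with the self-adjoint ``vertical'' operator
\[
\D_v:=\sum_{\chi\in\Char(G)}\sum_{j=1}^{n}\eta\big(\,\!_{\Gamma(\Cl(G))^G}(\chi^{-1}\D_G(\chi x_j)\wh{\otimes}\bc|x_j\wh{\otimes}1)\big)P_{\chi\zeta}
\]
on $\h$, which restricts on each $\h_{\chi\zeta}$ to a bounded Clifford operator of norm comparable to the weight of $\chi$. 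The required inclusion $\dom(\D)\subseteq\dom(\D_v)$ is then extracted from condition 2) of Definition \ref{defn:theta} (now used for general $a\in\oplus_\chi\A_\chi$, not just along $\h_\zeta$) together with $\h_{\chi\zeta}=\ol{\A_\chi\h_\zeta}$, the latter following from $\ol{A\h_\zeta}=\h$ and $\alpha$-compatibility. I expect this domination of $\D_v$ by $\D$ --- i.e.\ making precise the sense in which the vertical part of $\D$ is controlled by $\D$ --- to be the main obstacle; it is the analogue of the delicate step in \cite{CNNR}, and the only place where condition 2) of Definition \ref{defn:theta} is genuinely needed in full strength.

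It remains to identify the semiboundedness condition. Kucerovsky's condition asks for a constant $C$ with $\Ideal{\D_v\xi,\D\xi}+\Ideal{\D\xi,\D_v\xi}\geq-C\|\xi\|^2$ for all $\xi\in\dom(\D)$. Since $\D$, $\D_v$ and every $P_{\chi\zeta}$ commute with the action of $G$, and $\D_v$ is block-diagonal with respect to $\h=\bigoplus_{\chi\in\Char(G)}\h_{\chi\zeta}$, this quadratic form is the orthogonal sum over $\chi\in\Char(G)$ of the forms $\xi\mapsto\Ideal{\D_v P_{\chi\zeta}\xi,\D P_{\chi\zeta}\xi}+\Ideal{\D P_{\chi\zeta}\xi,\D_v P_{\chi\zeta}\xi}$; and since $\D P_{\chi\zeta}=P_{\chi\zeta}\D$ and $\D_v P_{\chi\zeta}$ is, by the definition of $\D_v$, the sum over $j$ of $\eta\big(\,\!_{\Gamma(\Cl(G))^G}(\chi^{-1}\D_G(\chi x_j)\wh{\otimes}\bc|x_j\wh{\otimes}1)\big)P_{\chi\zeta}$, the $\chi$-summand is exactly the left-hand side of \eqref{eq:pos}, and it depends only on $P_{\chi\zeta}\xi$. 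Consequently the whole form is bounded below by $-C\|\xi\|^2$ for all $\xi\in\dom(\D)$ if and only if \eqref{eq:pos} holds with $R=-C$. Thus \eqref{eq:pos} is precisely Kucerovsky's semiboundedness hypothesis.

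Granting \eqref{eq:pos}, all three hypotheses of \cite{Kucerovsky}*{Theorem 13} hold, so that theorem shows that the candidate triple --- hence $(\A,\h,\D)$ --- represents the Kasparov product of the module of Proposition \ref{prop:leftmiddle} with the right-hand module, which by associativity of the Kasparov product and Proposition \ref{prop:leftmiddle} is the iterated product of the left-hand, middle and right-hand modules. This is the assertion of the theorem.
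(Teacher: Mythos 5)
Your overall route is the paper's: conjugate by $\Psi$, apply Kucerovsky's Theorem 13 to the product of the left-plus-middle module with the right-hand module, obtain the connection condition from the creation operators together with condition 2) of Definition \ref{defn:theta} (this is Proposition \ref{prop:connection}), identify $\Psi\circ(\D_1\wh{\otimes}1)\wh{\otimes}1\circ\Psi^{-1}$ with the operator $\D_v:=\sum_{\chi\in\Char(G)}\sum_{j=1}^n\eta\big(\,\!_{\Gamma(\Cl(G))^G}(\chi^{-1}\D_G(\chi x_j)\wh{\otimes}\bc|x_j\wh{\otimes}1)\big)P_{\chi\zeta}$ as in Lemma \ref{lem:d111}, and convert \eqref{eq:pos}, applied to $P_{\chi\zeta}\xi$ and summed over $\chi$, into Kucerovsky's semiboundedness. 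Those parts agree with the paper.

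The genuine gap is the domain condition. You take it in the strong form $\dom(\D)\subseteq\dom(\D_v)$ and propose to extract it from condition 2) of Definition \ref{defn:theta} together with $\h_{\chi\zeta}=\ol{\A_\chi\h_\zeta}$. That cannot work as stated: condition 2) gives, for each fixed $a\in\oplus_\chi\A_\chi$ and $s\in\Gamma(\Cl(G))^G$, boundedness of $[\D,\eta(s)]_\pm aP_\zeta$ with a constant depending on $a$, and provides no uniformity over characters, whereas $\dom(\D)\subseteq\dom(\D_v)$ amounts to a relative bound $\|\D_v\xi\|\leq C(\|\D\xi\|+\|\xi\|)$ for an operator whose restriction to $\h_{\chi\zeta}$ has norm growing with the weight of $\chi$. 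Such a vertical ellipticity estimate holds in the geometric examples but is not a consequence of the hypotheses of the theorem, and the paper never proves it; the step you yourself flag as ``the main obstacle'' is exactly where the argument as proposed fails. What the paper verifies instead (Proposition \ref{prop:domain}) is a much weaker domain criterion: for $\mu\neq0$ the resolvent $(i\mu+\D)^{-1}$ maps $C_c^\infty(\D_v)\h$ into $\dom(\D_v)$. This follows softly from Lemma \ref{lem:d111}: by compactness of the resolvent of $\D_G$, vectors in $C_c^\infty(\D_v)\h$ have only finitely many non-zero spectral components; $(i\mu+\D)^{-1}$ is equivariant, hence preserves each $\h_\chi$ and therefore this finiteness; and $\D_v$ is bounded on each $\h_{\chi\zeta}$. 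Replacing your domain step by this argument (and, correspondingly, formulating the positivity on vectors where the pairing with $\D_v$ is defined --- which is what the per-character condition \eqref{eq:pos}, applied to $P_{\chi\zeta}\xi$, accomplishes --- rather than asserting an equivalence of quadratic-form bounds on all of $\dom(\D)$) is what closes the proof.
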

\begin{rem}
Although  \cite{Kucerovsky}*{Theorem 13} is stated for the non-equivariant case, 
it requires no modification in the equivariant case, \cite{Kucerovsky2000}.
\end{rem}
Theorem \ref{thm:main} is proved by showing that Kucerovsky's domain and connection
conditions hold under the existing assumptions. The remaining positivity 
condition is precisely condition \eqref{eq:pos}. 
\begin{prop}[The connection criterion]
\label{prop:connection}
For each $e\in E_1\wh{\otimes}_{A^G\wh{\otimes}\bC}(A^G\wh{\otimes}(\Gamma(\bs{\$}_G)^G\wh{\otimes}\bC)^*)$, let $T_e:\h_\zeta\rightarrow(E_1\wh{\otimes}_{A^G\wh{\otimes}\bC}(A^G\wh{\otimes}(\Gamma(\bs{\$}_G)^G\wh{\otimes}\bC)^*))\wh{\otimes}_{A^G\wh{\otimes}\Gamma(\Cl(G))^G}\h_\zeta$
be the creation operator.
The graded commutators
$$\left[\begin{pmatrix}\Psi^{-1}\circ\D\circ\Psi&0\\0&\D_\zeta\end{pmatrix},\begin{pmatrix}0&T_e\\T_e^*&0\end{pmatrix}\right]_\pm$$
are bounded for all $e\in Y$, where 
$Y\subset E_1\wh{\otimes}_{A^G\wh{\otimes}\bC}(A^G\wh{\otimes}(\Gamma(\bs{\$}_G)^G\wh{\otimes}\bC)^*)$ 
is the dense subspace
\begin{align*}
Y&:=\Span\{(z\wh{\otimes}s)\wh{\otimes}(a\wh{\otimes}\ol{w})\in E_1\wh{\otimes}_{A^G\wh{\otimes}\bC}(A^G\wh{\otimes}(\Gamma(\bs{\$}_G)^G\wh{\otimes}\bC)^*):z\in \oplus_\chi \A_\chi,\ a\in\A^G\}.
\end{align*}
\end{prop}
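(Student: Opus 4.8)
The plan is to reduce the boundedness of the graded commutator to explicit computations on the dense subspace $Y$, using the creation operator $T_e$ on a homogeneous generator $e = (z\wh{\otimes}s)\wh{\otimes}(a\wh{\otimes}\ol{w})$ with $z\in\oplus_\chi\A_\chi$, $s\in\Gamma(\bs{\$}_G)^G\wh{\otimes}\bC$ (only the $\D_G$-smooth vectors are needed, and the frame elements $x_j\wh{\otimes}1$ span these), $a\in\A^G$. The off-diagonal block structure means the graded commutator has entries $(\Psi^{-1}\D\Psi)\,T_e - (-1)^{\deg e}T_e\,\D_\zeta$ and its adjoint, so it suffices to bound the first entry on $\dom(\D_\zeta)$ and use self-adjointness for the other. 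First I would transport everything through the unitary $\Psi$ of the previous proposition, so that the target space is simply $\h$ and $\Psi^{-1}\D\Psi$ becomes $\D$; then $\Psi\circ T_e:\h_\zeta\to\h$ is computed directly from formula \eqref{eq:sigh}, yielding on $\xi\in\h_\zeta$ a finite sum over $\chi$ of terms $(-1)^{\deg s\cdot\deg a}\Phi_\chi(z)\,a\,\eta\big(\,\!_{\Gamma(\Cl(G))^G}(\chi^{-1}p_{\chi^{-1}}s|w)\big)\xi$, where only finitely many $\chi$ contribute because $z\in\oplus_\chi\A_\chi$. So $\Psi\circ T_e$ is, up to signs, a finite sum of operators of the form $\xi\mapsto b\,\eta(s')\xi$ with $b\in\A$ and $s'\in\Gamma(\Cl(G))^G$.

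The core step is then the commutator estimate. Writing $b=\Phi_\chi(z)a\in\oplus_\nu\A_\nu$ and $s'$ the relevant Clifford element, we must bound $\D\big(b\,\eta(s')\xi\big) - (-1)^{\deg e}\,b\,\eta(s')\,\D_\zeta\xi$ on $\dom(\D_\zeta)$. Insert $P_\zeta$ (since $\xi\in\h_\zeta$): condition 2) of Definition \ref{defn:theta} guarantees $b\,\eta(s')\cdot\dom(\D_\zeta)\subset\dom(\D)$, so the first term is well-defined, and it also guarantees $[\D,\eta(s')]_\pm b P_\zeta$ is bounded on $\h$. Expanding via the graded Leibniz rule,
\begin{align*}
\D b\eta(s')\xi &= [\D,b]_\pm\,\eta(s')\xi + (-1)^{\deg b}b\,[\D,\eta(s')]_\pm\,\xi + (-1)^{\deg b+\deg s'}b\,\eta(s')\,\D_\zeta\xi,
\end{align*}
and the third term matches $(-1)^{\deg e}b\eta(s')\D_\zeta\xi$ once one checks the parity bookkeeping $\deg e \equiv \deg b + \deg s' \pmod 2$ against $\deg z + \deg s + \deg a + \deg w$. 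The first term is bounded because $[\D,b]_\pm$ is bounded for $b\in\A$ (Definition \ref{defn:kasmodule}(2)) and $\eta(s')$ is bounded; the second term is bounded because $b[\D,\eta(s')]_\pm P_\zeta = (-1)^{(\deg b)(\deg s')}[\D,\eta(s')]_\pm b P_\zeta + [b,[\D,\eta(s')]_\pm]_\pm P_\zeta$—here the first summand is bounded by condition 2), and for the second one uses $[\eta(s'),b]_\pm=0$ (condition 1), valid since $b\in A^G$ only up to the spectral-subspace twist, so one must instead invoke condition 1) in the form $[\eta(s'),a]_\pm=0$ for $a\in A^G$ together with the fact that the commutator $[\D,\eta(s')]_\pm$ can be manipulated using condition 2) applied with varying $a$). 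Summing over the finitely many $\chi$ and over $j$ then gives a bounded operator; taking adjoints handles the lower-left block, and boundedness of a graded commutator of self-adjoint-ish blocks follows.

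The main obstacle will be the bookkeeping around the spectral-subspace twist: unlike the classical even/odd unbounded Kasparov picture, the relevant module element $b=\Phi_\chi(z)a$ lives in $\A_{\chi\nu}$ rather than $\A^G$, so condition 1) of Definition \ref{defn:theta} (which is stated for $a\in A^G$) does not directly give $[\eta(s'),b]_\pm=0$. The resolution is that $\eta(s')$ need only be commuted past $a\in\A^G$, while the $\Phi_\chi(z)$ factor is absorbed into the $\D_G$-derivative structure on the $G$-side; concretely, one rewrites $b\,\eta(s')$ so that the Clifford element sits adjacent to $a$, using the explicit form $\,\!_{\Gamma(\Cl(G))^G}(\chi^{-1}p_{\chi^{-1}}s|w)$ and the equivariance of $\eta$ to move the character $\chi$ through. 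I would isolate this in a short lemma computing $\eta\big(\,\!_{\Gamma(\Cl(G))^G}(\chi^{-1}\D_G(\chi x_j)\wh{\otimes}\bc|x_j\wh{\otimes}1)\big)$ as the "defect" operator—precisely the operator appearing in \eqref{eq:pos}—so that the connection computation here and the positivity computation in Theorem \ref{thm:main} share the same building block. Everything else is routine: finiteness of the $\chi$-sum, density of $Y$, and the parity signs.
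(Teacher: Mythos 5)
Your overall architecture is viable, and your treatment of the lower-left corner is a genuine shortcut compared with the paper: the paper computes $\D_\zeta\circ T_e^*-(-1)^{\deg e}T_e^*\circ\Psi^{-1}\circ\D\circ\Psi$ explicitly, using approximating sequences $c_k^\chi\sigma_k^\chi\to\D\big(\Phi_\chi(y)b\eta(\cdot)\xi\big)$ and the isometry property of $\Psi$, whereas you bound only the upper-right entry and appeal to self-adjointness. That shortcut is correct, but it must be spelled out: once the upper entry is everywhere defined on $\dom(\D_\zeta)$ and bounded there --- which consumes the domain part of Definition \ref{defn:theta}(2), $a\eta(s)\cdot\dom(\D_\zeta)\subset\dom(\D)$, to ensure $\Psi T_e$ maps $\dom(\D_\zeta)$ into $\dom(\D)$ --- a short adjoint argument with the self-adjoint operators $\D_\zeta$ and $\Psi^{-1}\circ\D\circ\Psi$ shows that $T_e^*$ maps $\dom(\Psi^{-1}\circ\D\circ\Psi)$ into $\dom(\D_\zeta)$ and that the lower entry equals, up to sign, the adjoint of (the closure of) the upper entry, hence is bounded. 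As written you merely assert this. Also, the restriction of $s$ to ``$\D_G$-smooth'' or frame vectors is unnecessary and changes the subspace $Y$: in all estimates $s$ enters only through the fixed element $\,\!_{\Gamma(\Cl(G))^G}(\chi^{-1}p_{\chi^{-1}}s|w)$ of the finite-dimensional algebra $\Gamma(\Cl(G))^G$, so arbitrary $s\in L^2(\bs{\$}_G)\wh{\otimes}\bC$ is fine, exactly as in the paper.

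The genuine gap is in the key estimate for the upper entry. After reducing to bounding $[\D,\Phi_\chi(z)a\,\eta(s')]_\pm P_\zeta$ with $s'=\,\!_{\Gamma(\Cl(G))^G}(\chi^{-1}p_{\chi^{-1}}s|w)$, your Leibniz split leaves the term $b[\D,\eta(s')]_\pm P_\zeta$ with $b=\Phi_\chi(z)a$ on the \emph{left}, and neither of your remedies applies: Definition \ref{defn:theta}(1) gives $[\eta(s'),a]_\pm=0$ only for $a\in A^G$, and $b\notin A^G$; the second-order commutator $[b,[\D,\eta(s')]_\pm]_\pm P_\zeta$ is controlled by no hypothesis; and ``absorbing $\Phi_\chi(z)$ into the $\D_G$-derivative'' or ``moving $\chi$ through by equivariance of $\eta$'' has no content here, since $\chi$ occurs only inside the fixed Clifford element $s'$. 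The repair is elementary and is what the paper tacitly uses when asserting $\big\|[\D,\Phi_\chi(z)a\eta(s')]_\pm P_\zeta\big\|<\infty$: commute $\eta(s')$ past $a$ alone (condition (1), since $a\in\A^G$), so $b\eta(s')=\pm\Phi_\chi(z)\eta(s')a$, and apply the graded Leibniz rule to this ordering,
\begin{align*}
[\D,\Phi_\chi(z)\eta(s')a]_\pm P_\zeta=[\D,\Phi_\chi(z)]_\pm\eta(s')aP_\zeta\pm\Phi_\chi(z)[\D,\eta(s')]_\pm aP_\zeta\pm\Phi_\chi(z)\eta(s')[\D,a]_\pm P_\zeta,
\end{align*}
where the first and third terms are bounded because $\Phi_\chi(z),a\in\A$ and $\eta(s')$ is bounded, and the middle term is bounded by condition (2) of Definition \ref{defn:theta} (the algebra element now sits on the correct side of the commutator, and $\A^G=\A_1\subset\oplus_\chi\A_\chi$); the domain identities needed for this expansion on $\dom(\D_\zeta)$ follow from conditions (1), (2) and Definition \ref{defn:kasmodule}(2). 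With this substitution, and finiteness of the $\chi$-sum coming from $z\in\oplus_\chi\A_\chi$ exactly as in the paper, your argument closes.
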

\begin{proof}
Let $e=(z\wh{\otimes}s)\wh{\otimes}(a\wh{\otimes}\ol{w})\in Y$, 
$\big((y\wh{\otimes}t)\wh{\otimes}(b\wh{\otimes}\ol{v})\big)\wh{\otimes}\xi\in\dom(\Psi^{-1}\circ\D\circ\Psi)$ 
and 
$\psi\in\dom(\D_\zeta)$, each of homogeneous degree. Then the upper entry of the column vector 
$$
\left[\begin{pmatrix}\Psi^{-1}\circ\D\circ\Psi&0\\0&\D_\zeta\end{pmatrix},\begin{pmatrix}0&T_e\\T_e^*&0\end{pmatrix}\right]_\pm\begin{pmatrix}\big((y\wh{\otimes}t)\wh{\otimes}(b\wh{\otimes}\ol{v})\big)\wh{\otimes}\xi\\\psi\end{pmatrix}
$$ 
is
\begin{align*}
&\Psi^{-1}\circ\D\circ\Psi\circ T_e\psi-(-1)^{\deg z+\deg s+\deg a+\deg w}T_e\circ\D_\zeta\psi\\
&=\Psi^{-1}\circ\D\circ\Psi\Big(\big((z\wh{\otimes}s)\wh{\otimes}(a\wh{\otimes}\ol{w})\big)\wh{\otimes}\psi\Big)-(-1)^{\deg z+\deg s+\deg a+\deg w}\big((z\wh{\otimes}s)\wh{\otimes}(a\wh{\otimes}\ol{w})\big)\wh{\otimes}\D_\zeta\psi\\
&=(-1)^{\deg s\cdot\deg a}\Psi^{-1}\circ\D\sum_{\chi\in\Char(G)}\Phi_\chi(z) a\eta\big(\,\!_{\Gamma(\Cl(G))^G}(\chi^{-1} p_{\chi^{-1}} s|w)\big)\psi\\
&\quad-(-1)^{\deg z+\deg s+\deg a+\deg w+\deg s\cdot\deg a}\Psi^{-1}\sum_{\chi\in\Char(G)}\Phi_\chi(y)a\eta\big(\,\!_{\Gamma(\Cl(G))^G}(\chi^{-1} p_{\chi^{-1}} s|w)\big)\D_\zeta\psi\\
&=(-1)^{\deg s\cdot\deg a}\Psi^{-1}\sum_{\chi\in\Char(G)}[\D,\Phi_\chi(z)a\eta\big(\,\!_{\Gamma(\Cl(G))^G}(\chi^{-1} p_{\chi^{-1}} s|w)\big)]_\pm \psi,
\end{align*}
and we estimate
\begin{align*}
&\bigg\|(-1)^{\deg s\cdot\deg a}\Psi^{-1}\sum_{\chi\in\Char(G)}[\D,\Phi_\chi(z)a\eta\big(\,\!_{\Gamma(\Cl(G))^G}(\chi^{-1} p_{\chi^{-1}} s|w)\big)]_\pm \psi\bigg\|^2\\
&=\sum_{\chi\in\Char(G)}\big\|[\D,\Phi_\chi(z)a\eta\big(\,\!_{\Gamma(\Cl(G))^G}(\chi^{-1} p_{\chi^{-1}} s|w)\big)]_\pm P_\zeta\psi\big\|^2\\
&\leq\|\psi\|^2\sum_{\chi\in\Char(G)}\big\|[\D,\Phi_\chi(z)a\eta\big(\,\!_{\Gamma(\Cl(G))^G}(\chi^{-1} p_{\chi^{-1}} s|w)\big)]_\pm P_\zeta\big\|^2,
\end{align*}
where the sum converges since $z\in \oplus_\chi\A_\chi$. 
Hence the upper entry is a bounded function of $\psi$.
For the lower entry we have
\begin{align*}
&\D_\zeta\circ T_e^*\big(\big((y\wh{\otimes}t)\wh{\otimes}(b\wh{\otimes}v)\big)\wh{\otimes}\xi\big)=\D_\zeta\big(\big((z\wh{\otimes} s)\wh{\otimes}(a\wh{\otimes}\ol{w})\big|(y\wh{\otimes} t)\wh{\otimes}(b\wh{\otimes}v)\big)_{A^G\wh{\otimes}\Gamma(\Cl(G))^G}\xi\big)\\
&=(-1)^{\deg s\cdot(\deg z+\deg y)+\deg w\cdot(\deg a+\deg z+\deg y+\deg b)+\deg b\cdot(\deg s+\deg t)}\\
&\quad\qquad\qquad\times\sum_{\chi\in\Char(G)}\D_\zeta\Big(a^*\Phi_\chi(z)^*\Phi_\chi(y)b\eta\big(\,\!_{\Gamma(\Cl(G))^G}(\chi^{-1} p_{\chi^{-1}}s|w)\,\!_{\Gamma(\Cl(G))^G}(\chi^{-1} p_{\chi^{-1}}t|v)\big)\xi\Big),
\end{align*}
using Lemma \ref{lem:prodesp} and Equation \eqref{eq:carthage}. 
Let $(x_j)_{j=1}^{n}$ be a $G$-invariant, global orthonormal frame 
for $\bs{\$}_G$, and let $(\phi_\ell)_{\ell=1}^\infty$ be an approximate 
identity for $A^G$ of homogeneous degree zero. For each 
$\chi\in\Char(G)$, let $(c_k^\chi)_{k=1}^\infty\subset A$ and 
$(\sigma_k^\chi)_{k=1}^\infty\subset\h_\zeta$ be sequences such that
$$
\lim_{k\rightarrow\infty}c_k^\chi \sigma_k^\chi
=\D\big(\Phi_\chi(y) b\eta\big(\,\!_{\Gamma(\Cl(G))^G}(\chi^{-1} p_{\chi^{-1}} t|v)\big)\xi\big).
$$
Then
\begin{align*}
&T_e^*\circ\Psi^{-1}\circ\D\circ\Psi\big(\big((y\wh{\otimes}t)\wh{\otimes}(b\wh{\otimes}v)\big)\wh{\otimes}\xi\big)=(-1)^{\deg t\cdot\deg b+\deg s\cdot\deg z+\deg w\cdot(\deg a+\deg z)}\\
&\quad\qquad\qquad\times\sum_{\chi,\nu\in\Char(G)}\sum_{j=1}^{n}\lim_{k\rightarrow\infty}a^*\Phi_\nu(z)^*\eta\big(\,\!_{\Gamma(\Cl(G))^G}(w|(x_j\wh{\otimes}1)\cdot(\nu x_j\wh{\otimes}1|p_{\nu^{-1}} s)_{\bC})\big)\Phi_\nu(c_k^\chi)\sigma_k^\chi\\
&=(-1)^{\deg t\cdot\deg b+\deg s\cdot\deg z+\deg w\cdot(\deg a+\deg z)}\times\\
&\sum_{\chi}\sum_{j=1}^{n}a^*\Phi_\chi(z)^*\eta\big(\,\!_{\Gamma(\Cl(G))^G}(w|(x_j\wh{\otimes}1)\cdot(\chi x_j\wh{\otimes}1|p_{\chi^{-1}} s)_{\bC})\big)\D\Big(\Phi_\chi(y) b\eta\big(\,\!_{\Gamma(\Cl(G))^G}(\chi^{-1} p_{\chi^{-1}} t|v)\big)\xi\Big)
\end{align*}
where we have used
\begin{align*}
\lim_{k\rightarrow\infty}\Phi_\nu(c_k^\chi)\sigma^\chi_k&=\lim_{k\rightarrow\infty}P_{\nu\zeta}c^\chi_k\sigma^\chi_k=P_{\nu\zeta}\D\big(\Phi_\chi(y) b\eta\big(\,\!_{\Gamma(\Cl(G))^G}(\chi^{-1} p_{\chi^{-1}} t|v)\big)\xi\big)\\
&=\delta_{\nu,\chi}\D\big(\Phi_\chi(y) b\eta\big(\,\!_{\Gamma(\Cl(G))^G}(\chi^{-1} p_{\chi^{-1}} t|v)\big)\xi\big).
\end{align*}
Since $\chi^{-1} p_{\chi^{-1}} s=\sum_{j=1}^{n}(x_j\wh{\otimes}1)\cdot(\chi x_j\wh{\otimes}1|p_{\chi^{-1}} s)_{\bC}$,
\begin{align*}
&T_e^*\circ\Psi^{-1}\circ\D\circ\Psi\big(\big((y\wh{\otimes}t)\wh{\otimes}(b\wh{\otimes}v)\big)\wh{\otimes}\xi\big)=(-1)^{\deg t\cdot\deg b+\deg s\cdot\deg z+\deg w\cdot(\deg a+\deg z)}\\
&\quad\qquad\times\sum_{\chi\in\Char(G)}a^*\Phi_\chi(z)^*\eta\big(\,\!_{\Gamma(\Cl(G))^G}(w|\chi^{-1} p_{\chi^{-1}} s)\big)\D\Big(\Phi_\chi(y) b\eta\big(\,\!_{\Gamma(\Cl(G))^G}(\chi^{-1} p_{\chi^{-1}} t|v)\big)\xi\Big).
\end{align*}
Hence the lower entry is
\begin{align*}
&\D_\zeta\circ T_e^*\big(\big((y\wh{\otimes}t)\wh{\otimes}(b\wh{\otimes}v)\big)\wh{\otimes}\xi\big)-(-1)^{\deg z+\deg s+\deg a+\deg w}T_e^*\circ\Psi^{-1}\circ\D\circ\Psi\big(\big((y\wh{\otimes}t)\wh{\otimes}(b\wh{\otimes}v)\big)\wh{\otimes}\xi\big)\\
&=(-1)^{\deg s\cdot(\deg z+\deg y)+\deg w\cdot(\deg a+\deg z+\deg y+\deg b)+\deg b\cdot(\deg s+\deg t)}\\
&\quad\qquad\qquad\times\sum_{\chi\in\Char(G)}\D_\zeta\Big(a^*\Phi_\chi(z)^*\Phi_\chi(y)b\eta\big(\,\!_{\Gamma(\Cl(G))^G}(\chi^{-1} p_{\chi^{-1}}s|w)\,\!_{\Gamma(\Cl(G))^G}(\chi^{-1} p_{\chi^{-1}}t|v)\big)\xi\Big)\\
&\quad-(-1)^{\deg z+\deg s+\deg a+\deg w+\deg t\cdot\deg b+\deg s\cdot\deg z+\deg w\cdot(\deg a+\deg z)}\\
&\qquad\qquad\times\sum_{\chi\in\Char(G)}a^*\Phi_\chi(z)^*\eta\big(\,\!_{\Gamma(\Cl(G))^G}(w|\chi^{-1} p_{\chi^{-1}} s)\big)\D\Big(\Phi_\chi(y) b\eta\big(\,\!_{\Gamma(\Cl(G))^G}(\chi^{-1} p_{\chi^{-1}} t|v)\big)\xi\Big)\\
&=(-1)^{\deg t\cdot\deg b+\deg s\cdot\deg z+\deg w\cdot(\deg a+\deg z)}\\
&\qquad\qquad\quad\times\sum_{\chi\in\Char(G)}[\D,a^*\Phi_\chi(z)^*\eta\big(\,\!_{\Gamma(\Cl(G))^G}(w|\chi p_{\chi} s)\big)]_\pm\Phi_\chi(y) b\eta\big(\,\!_{\Gamma(\Cl(G))^G}(\chi p_{\chi} t|v)\big)\xi.
\end{align*}
Since $\Psi$ is an isometry, the sum $\sum_{\chi\in\Char(G)}\Phi_\chi(y)a\eta\big(\,\!_{\Gamma(\Cl(G))^G}(\chi^{-1} p_{\chi^{-1}} t|w)\big)\xi$ converges, so 
\begin{align*}
&\sum_{\chi\in\Char(G)}[\D,a^*\Phi_\chi(z)^*\eta\big(\,\!_{\Gamma(\Cl(G))^G}(w|\chi^{-1} p_{\chi^{-1}} s)\big)]_\pm\Phi_\chi(y) b\eta\big(\,\!_{\Gamma(\Cl(G))^G}(\chi^{-1} p_{\chi^{-1}} t|v)\big)\xi\\
&=\left(\sum_{\nu}P_\zeta[\D,a^*\Phi_\nu(z)^*\eta\big(\,\!_{\Gamma(\Cl(G))^G}(w|\nu^{-1} p_{\nu^{-1}} s)\big)]_\pm\right)\left(\sum_{\chi}\Phi_\chi(y) b\eta\big(\,\!_{\Gamma(\Cl(G))^G}(\chi^{-1} p_{\chi^{-1}} t|v)\big)\xi\right).
\end{align*}
Thus we can estimate the lower entry by
\begin{align*}
&\bigg\|\sum_{\chi\in\Char(G)}[\D,a^*\Phi_\chi(z)^*\eta\big(\,\!_{\Gamma(\Cl(G))^G}(w|\chi^{-1} p_{\chi^{-1}} s)\big)]_\pm\Phi_\chi(y) b\eta\big(\,\!_{\Gamma(\Cl(G))^G}(\chi^{-1} p_{\chi^{-1}} t|v)\big)\xi\bigg\|^2\\
&\leq\bigg\|\sum_{\nu}P_\zeta[\D,a^*\Phi_\nu(z)^*\eta\big(\,\!_{\Gamma(\Cl(G))^G}(w|\nu^{-1} p_{\nu^{-1}} s)\big)]_\pm\bigg\|^2\sum_{\chi}\big\|\Phi_\chi(y) b\eta\big(\,\!_{\Gamma(\Cl(G))^G}(\chi^{-1} p_{\chi^{-1}} t|v)\big)\xi\big\|^2\\
&=\bigg\|\sum_{\nu\in\Char(G)}P_\zeta[\D,a^*\Phi_\nu(z)^*\eta\big(\,\!_{\Gamma(\Cl(G))^G}(w|\nu^{-1} p_{\nu^{-1}} s)\big)]_\pm\bigg\|^2\big\|\big((y\wh{\otimes}t)\wh{\otimes}(b\wh{\otimes}\ol{v})\big)\wh{\otimes}\xi\big\|^2,
\end{align*}
since $\Psi$ is an isometry. We note that 
$\sum_{\nu\in\Char(G)}P_\zeta[\D,a^*\Phi_\nu(z)^*\eta\big(\,\!_{\Gamma(\Cl(G))^G}(w|\nu^{-1} p_{\nu^{-1}} s)\big)]_\pm$ 
is a finite sum of bounded operators and hence is bounded. 
Therefore the lower entry is a bounded function of 
$\big((y\wh{\otimes}t)\wh{\otimes}(b\wh{\otimes}\ol{v})\big)\wh{\otimes}\xi$.
\end{proof}
\begin{lemma}\label{lem:d111}
Let $(x_j)_{j=1}^{n}$ be a $G$-invariant global orthonormal frame 
for $\bs{\$}_G$, let $\D_G$ be the Dirac operator on $\bs{\$}_G$, 
and let $P_\chi\in B(\h)$ be the projection onto $\h_\chi$ for 
$\chi\in\Char(G)$. Then
$$
\Psi\circ(\D_1\wh{\otimes}1)\wh{\otimes}1\circ\Psi^{-1}=\sum_{\chi\in\Char(G)}\sum_{j=1}^{n}\eta\big(\,\!_{\Gamma(\Cl(G))^G}(\chi^{-1}\D_G(\chi x_j)\wh{\otimes}\bc|x_j\wh{\otimes}1)\big)P_{\chi\zeta}.
$$
\end{lemma}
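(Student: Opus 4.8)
The plan is to compute $\Psi\circ((\D_1\wh{\otimes}1)\wh{\otimes}1)\circ\Psi^{-1}$ directly from the explicit formulas for $\Psi$ (formula \eqref{eq:sigh}) and for $\Psi^{-1}$ (the preceding proposition), performing all manipulations on the finite‑sum approximants to $\Psi^{-1}(\xi)$ and only passing to the limits at the end. So, for $\xi\in\h$ in the domain of the closed operator in question, choose $(a_k)\subset A$ and $(\xi_k)\subset\h_\zeta$ with $a_k\xi_k\to\xi$, and a degree‑zero approximate identity $(\phi_\ell)$ for $A^G$, and write $e_{k,\ell}$ for the corresponding finite sum so that $\Psi^{-1}(\xi)=\lim_k\lim_\ell e_{k,\ell}$. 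Applying $(\D_1\wh{\otimes}1)\wh{\otimes}1$ term by term, the definition of $\D_1$ replaces each $\chi x_j\wh{\otimes}1$ by $\D_G(\chi x_j)\wh{\otimes}\bc$ (up to the Koszul sign $(-1)^{\deg\Phi_\chi(a_k)}$), and then one applies $\Psi$.

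The application of $\Psi$ is where the two key simplifications occur. First, the sum over characters in \eqref{eq:sigh} collapses: since $\Phi_\chi(a_k)\in A_\chi$ and the characters of $G$ are orthonormal with respect to Haar measure, $\Phi_\mu\circ\Phi_\chi=\delta_{\mu\chi}\Phi_\chi$, so only the $\mu=\chi$ term survives. Second, because $\D_G$ is equivariant and $\chi x_j\in L^2(\bs{\$}_G)_{\chi^{-1}}$, we also have $\D_G(\chi x_j)\in L^2(\bs{\$}_G)_{\chi^{-1}}$; hence $p_{\chi^{-1}}$ acts as the identity on $\D_G(\chi x_j)\wh{\otimes}\bc$, and $\chi^{-1}(\D_G(\chi x_j)\wh{\otimes}\bc)=(\chi^{-1}\D_G(\chi x_j))\wh{\otimes}\bc$ lies in $\Gamma(\bs{\$}_G)^G\wh{\otimes}\bC$ — which is exactly what makes $\,\!_{\Gamma(\Cl(G))^G}(\chi^{-1}\D_G(\chi x_j)\wh{\otimes}\bc|x_j\wh{\otimes}1)$ a well‑defined element of $\Gamma(\Cl(G))^G$, and hence what allows $\eta(\,\cdot\,)$ of the statement to appear. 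After these reductions $\Psi\big(((\D_1\wh{\otimes}1)\wh{\otimes}1)e_{k,\ell}\big)$ is a finite sum of terms $\pm\,\Phi_\chi(a_k)\phi_\ell\,\eta\big(\,\!_{\Gamma(\Cl(G))^G}(\chi^{-1}\D_G(\chi x_j)\wh{\otimes}\bc|x_j\wh{\otimes}1)\big)\xi_k$.

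It then remains to pass to the limits. The $\ell$‑limit uses the approximate‑identity estimates from the preceding proposition: $\Phi_\chi(a_k)\phi_\ell\to\Phi_\chi(a_k)$ in $A$ (using $\Phi_\chi(a_k)^*\Phi_\chi(a_k)\in A^G$), and $\phi_\ell\psi\to\psi$ for $\psi\in\h_\zeta$ (projecting the density $\ol{A\h_\zeta}=\h$ onto $\h_\zeta$ gives $\ol{A^G\h_\zeta}=\h_\zeta$). For the $k$‑limit one uses equivariance of $\eta$ (so $\eta$ of a fixed Clifford element preserves every spectral subspace, in particular $\h_\zeta$), the identity $P_{\chi\zeta}(a_k\eta(s)\xi_k)=\Phi_\chi(a_k)\eta(s)\xi_k$ for $\xi_k\in\h_\zeta$, and condition (1) of Definition \ref{defn:theta} (so $\eta(s)$ graded‑commutes with $A^G$), to rewrite the summands in a form to which $a_k\xi_k\to\xi$ applies, identifying the limit as $\sum_\chi\sum_j\eta\big(\,\!_{\Gamma(\Cl(G))^G}(\chi^{-1}\D_G(\chi x_j)\wh{\otimes}\bc|x_j\wh{\otimes}1)\big)P_{\chi\zeta}\xi$. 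Since $\Psi$ is unitary and $(\D_1\wh{\otimes}1)\wh{\otimes}1$ is closed, the resulting series over the character group converges for such $\xi$ and the double limit may be interchanged with the operator; it then suffices to have the identity on a core such as $\Span\{a\psi:a\in\oplus_\chi\A_\chi,\ \psi\in\dom(\D_\zeta)\}$, after which closedness of both sides gives the lemma.

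I expect the main obstacle to be this last point: justifying the interchange of the closed unbounded operator $(\D_1\wh{\otimes}1)\wh{\otimes}1$ with the iterated limits defining $\Psi^{-1}(\xi)$, and the convergence of the sum over the (infinite) character group. The way around it is to work throughout with the finite‑sum approximants $e_{k,\ell}$ and to transfer convergence between the product module and $\h$ using the isometry of $\Psi$ together with closedness of $(\D_1\wh{\otimes}1)\wh{\otimes}1$; the remaining care is in the $\Z_2$‑graded sign bookkeeping and in the repeated use of Definition \ref{defn:theta}(1) to commute $\eta$ past the fixed‑point algebra.
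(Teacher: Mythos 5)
Your computation is exactly the paper's proof of this lemma, which is a short direct calculation: expand $\Psi^{-1}(\xi)$ via the chosen sequences $(a_k)$, $(\xi_k)$ and the approximate identity $(\phi_\ell)$, apply $(\D_1\wh{\otimes}1)\wh{\otimes}1$ termwise (picking up the sign $(-1)^{\deg a_k}$ and replacing $\chi x_j\wh{\otimes}1$ by $\D_G(\chi x_j)\wh{\otimes}\bc$), apply $\Psi$ using the collapse of the character sum and the fact that equivariance of $\D_G$ makes $p_{\chi^{-1}}$ act as the identity and $\chi^{-1}\D_G(\chi x_j)\wh{\otimes}\bc$ invariant, and finally identify the limits via $\Phi_\chi(a_k)\xi_k=P_{\chi\zeta}(a_k\xi_k)\to P_{\chi\zeta}\xi$ and the isometry of $\Psi$. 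Your additional remarks on interchanging the closed operator with the iterated limits and working on a core only make explicit what the paper leaves implicit, so the proposal is correct and takes essentially the same route.
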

\begin{proof}
Let $\bc$ be the generator of $\bC$, let 
$\xi\in\dom(\Psi\circ(\D_1\wh{\otimes}1)\wh{\otimes}1\circ\Psi^{-1})$, 
and choose sequences $(a_k)_{k=1}^\infty\subset A$ and 
$(\xi_k)_{k=1}^\infty\subset \h_\zeta$ such that $a_k\xi_k\rightarrow\xi$ 
as $k\rightarrow\infty$. Then
\begin{align*}
&\Psi\circ(\D_1\wh{\otimes}1)\wh{\otimes}1\circ\Psi^{-1}\xi\\
&=\Psi\sum_{\chi\in\Char(G)}\sum_{j=1}^{n}\lim_{k\rightarrow\infty}\lim_{\ell\rightarrow\infty}(-1)^{\deg a_k}\big(\big(\Phi_\chi(a_k)\wh{\otimes}(\D_G(\chi x_j)\wh{\otimes}\bc)\big)\wh{\otimes}(\phi_\ell\wh{\otimes}\ol{x_j\wh{\otimes}1})\big)\wh{\otimes}\xi_k\\
&=\sum_{\chi\in\Char(G)}\sum_{j=1}^{n}\lim_{k\rightarrow\infty}\eta\big(\,\!_{\Gamma(\Cl(G))^G}(\chi^{-1}\D_G(\chi x_j)\wh{\otimes}\bc|x_j\wh{\otimes}1)\big)\Phi_\chi(a_k)\xi_k\\
&=\sum_{\chi\in\Char(G)}\sum_{j=1}^{n}\eta\big(\,\!_{\Gamma(\Cl(G))^G}(\chi^{-1} \D_G(\chi^{-1}x_j)\wh{\otimes}\bc|x_j\wh{\otimes}1)\big)P_{\chi\zeta}\xi.&&\qedhere
\end{align*}
\end{proof}
\begin{prop}[The domain criterion]\label{prop:domain}
For all $\mu\in\mathbb{R}\take\{0\}$, the resolvent $(i\mu+\D)^{-1}$ maps the 
submodule $C_c^\infty(\Psi\circ(\D_1\wh{\otimes}1)\wh{\otimes}1\circ\Psi^{-1})\h$ 
into $\dom(\Psi\circ(\D_1\wh{\otimes}1)\wh{\otimes}1\circ\Psi^{-1})$.
\end{prop}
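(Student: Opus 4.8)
Write $\D_\Psi:=\Psi\circ(\D_1\wh{\otimes}1)\wh{\otimes}1\circ\Psi^{-1}$, a self-adjoint operator on $\h$. By Lemma \ref{lem:d111},
$$\D_\Psi=\sum_{\chi\in\Char(G)}\sum_{j=1}^{n}\eta\big(\,\!_{\Gamma(\Cl(G))^G}(\chi^{-1}\D_G(\chi x_j)\wh{\otimes}\bc|x_j\wh{\otimes}1)\big)P_{\chi\zeta}.$$
The plan is to read off from this formula that $\D_\Psi$ is diagonal along the $G$-spectral subspaces of $\h$, with bounded blocks separated by a growing spectral gap; then every $f\in C_c^\infty(\R)$ applied to $\D_\Psi$ is supported on only finitely many spectral subspaces, $(i\mu+\D)^{-1}$ preserves that finite sum because it is $G$-equivariant, and $\D_\Psi$ is bounded there, which is the assertion. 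For the diagonal structure: since $\eta$ is equivariant and $\Gamma(\Cl(G))^G$ is the fixed-point algebra, each $\eta(s)$ commutes with the $G$-representation on $\h$ and hence with every spectral projection; so the $\chi$-th summand preserves $\h_{\chi\zeta}$ and annihilates $\h_{\nu\zeta}$ for $\nu\ne\chi$. Thus $\D_\Psi$ restricts to a bounded operator $B_\chi\in B(\h_{\chi\zeta})$ on each $\h_{\chi\zeta}$, and with respect to $\h=\bigoplus_{\chi}\h_{\chi\zeta}$ we have $\D_\Psi=\bigoplus_\chi B_\chi$; in particular every subspace contained in a finite orthogonal sum of the $\h_{\chi\zeta}$ lies in $\dom\D_\Psi$, on which $\D_\Psi$ is bounded.

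Next I would compute the gap. Squaring the defining formula for $\D_1$ and using $\bc^2=1$ gives $\D_1^2=1\wh{\otimes}(\D_G^2\wh{\otimes}1)$ on $E_1$, so after conjugating through the Morita equivalence and $\Psi$ one obtains $B_\chi^2=m_\chi^2\,\id_{\h_{\chi\zeta}}$, where $m_\chi^2\ge 0$ is the (scalar) value of the Laplacian $\D_G^2$ on the spectral subspace $L^2(\bs{\$}_G)_\chi$. Equivalently, $\D_G$ acts on $L^2(\bs{\$}_G)_\chi\cong\bs{\$}_{G,e}$ by a self-adjoint constant $\gamma_\chi$ with $\gamma_\chi^2=m_\chi^2$; identifying $\gamma_\chi$ with an element of $\Gamma(\Cl(G))^G$ and using $\sum_j\,\!_{\Gamma(\Cl(G))^G}(x_j\wh{\otimes}1|x_j\wh{\otimes}1)=1$ together with $\eta$ unital gives $B_\chi=\eta(\gamma_\chi)P_{\chi\zeta}$, whence $B_\chi^2=\eta(\gamma_\chi^2)P_{\chi\zeta}=m_\chi^2P_{\chi\zeta}$. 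Since the weights of $G$ form a lattice of rank $\dim G$ and every character of $G$ occurs in $L^2(G)$, the set $\{\chi\in\Char(G):m_\chi\le M\}$ is finite for every $M>0$.

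To conclude, fix $f\in C_c^\infty(\R)$ with $\supp f\subseteq[-M,M]$. From $B_\chi^2=m_\chi^2\,\id$ we get $\Spec(B_\chi)\subseteq\{-m_\chi,m_\chi\}$, so $f(\D_\Psi)$ annihilates $\h_{\chi\zeta}$ whenever $m_\chi>M$; hence $f(\D_\Psi)\h\subseteq\h_F:=\bigoplus_{\chi:\,m_\chi\le M}\h_{\chi\zeta}$, a finite orthogonal sum of spectral subspaces, so every element of $C_c^\infty(\D_\Psi)\h$ lies in some such $\h_F$. Now $[\D,V_g]=0$ for all $g\in G$ by Definition \ref{defn:kasmodule}(4), so $(i\mu+\D)^{-1}$ commutes with every spectral projection and therefore maps $\h_F$ into $\h_F$; since $\h_F\subseteq\dom\D_\Psi$, this gives $(i\mu+\D)^{-1}C_c^\infty(\D_\Psi)\h\subseteq\dom\D_\Psi$, as required.

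The one delicate point is the computation of $\D_\Psi$ and of $B_\chi^2=m_\chi^2\,\id$: tracking the Clifford and $\Z_2$-degree bookkeeping through the Morita equivalence and through $\Psi$, and in particular checking that $\Psi$ intertwines the $G$-actions and gradings so that the ``$\chi$-slice'' of the triple tensor product is carried exactly onto $\h_{\chi\zeta}$, is where the work lies; the rest is routine operator bookkeeping. It is worth noting that this is precisely why the domain condition is phrased as a statement about $(i\mu+\D)^{-1}$ on the core $C_c^\infty(\D_\Psi)\h$ rather than as $\dom\D\subseteq\dom\D_\Psi$, which would be harder to verify and need not hold in general.
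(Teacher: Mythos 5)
Your argument is correct and takes essentially the same route as the paper: Lemma \ref{lem:d111} gives the block-diagonal form of $\Psi\circ(\D_1\wh{\otimes}1)\wh{\otimes}1\circ\Psi^{-1}$ along the spectral subspaces $\h_{\chi\zeta}$, the growth of the $\D_G$-eigenvalues (the paper invokes compactness of the resolvent of $\D_G$ where you compute $B_\chi^2=m_\chi^2\,\id$) forces elements of $C_c^\infty(\Psi\circ(\D_1\wh{\otimes}1)\wh{\otimes}1\circ\Psi^{-1})\h$ to have only finitely many nonzero spectral components, and equivariance of $(i\mu+\D)^{-1}$ preserves this finite support, giving membership in the domain. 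The paper's proof is simply a condensed version of your spectral-gap bookkeeping.
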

\begin{proof}
By Lemma \ref{lem:d111} and the compactness of $(1+\D_G)^{-1/2}$, 
if $\xi\in C_c^\infty(\Psi\circ(\D_1\wh{\otimes}1)\wh{\otimes}1\circ\Psi^{-1})\h$, 
then $P_\chi\xi=0$ for all but finitely many $\chi\in\Char(G)$. Since $(i\mu+\D)^{-1}$ 
commutes with the action of $G$, it preserves $\h_\chi$ for all $\chi\in\Char(G)$. 
Hence if $\xi\in C_c^\infty(\Psi\circ(\D_1\wh{\otimes}1)\wh{\otimes}1\circ\Psi^{-1})\h$, 
then $P_\chi(i\mu+\D)^{-1}\xi=0$ for all but finitely many $\chi\in\Char(G)$. Lemma \ref{lem:d111} 
then implies that $(i\mu+\D)^{-1}\in\dom(\Psi\circ(\D_1\wh{\otimes}1)\wh{\otimes}1\circ\Psi^{-1})$. \end{proof}
Since the connection and domain criteria of \cite{Kucerovsky}*{Theorem 13} are satisfied 
(Propositions \ref{prop:connection} and \ref{prop:domain} respectively), 
Theorem \ref{thm:main} is proved by combining the remaining positivity condition with Lemma \ref{lem:d111}.
\section{Factorisation for an odd spectral triple.}\label{sect:oddtriple}
Recall that $G$ is a compact abelian Lie group, equipped with the normalised Haar measure and a trivial spinor bundle $\bs{\$}_G$. However, suppose that rather than an \emph{even} $G$-equivariant spectral triple, we instead have an \emph{odd} $G$-equivariant spectral triple $(\A,\h,\D)$.

The $K$-homology class of an odd spectral triple is defined by associating to it an even spectral triple.  
Let $\gamma=\left(\begin{smallmatrix}1&0\\0&-1\end{smallmatrix}\right)\in B(\C^2)$, 
and equip $\C^2$ with the $\Z_2$-grading defined by $\gamma$.
Let $c$ be the generator of the Clifford algebra $\Cl_1$, and define a 
$\Z_2$-graded $\ast$-homomorphism $\Cl_1\rightarrow B(\C^2)$ by 
$c\mapsto\left(\begin{smallmatrix}0&1\\1&0\end{smallmatrix}\right)$. 
Equip $A\wh{\otimes}\Cl_1$ and $\h\wh{\otimes}\C^2$ with the obvious actions by $G$. 
Let $\omega=\left(\begin{smallmatrix}0&-i\\i&0\end{smallmatrix}\right)\in B(\C^2)$. 
Then $(\A\wh{\otimes}\Cl_1,\h\wh{\otimes}\C^2,\D\wh{\otimes}\omega)$ is an even 
$G$-equivariant spectral triple. The class of $(\A,\h,\D)$ in odd $K$-homology is defined to be 
$[(\A\wh{\otimes}\Cl_1,\h\wh{\otimes}\C^2,\D\wh{\otimes}\omega)]\in KK_G(A\wh{\otimes}\Cl_1,\C)=KK^1_G(A,\C)$, 
\cite{Connes}*{Prop. IV.A.13}.

We make the following definition analogously to Definition \ref{defn:theta}.
\begin{defn}\label{defn:thetaodd}
Let $(\A,\h,\D)$ be an odd, $G$-equivariant spectral triple for a trivially $\Z_2$-graded separable $C^*$-algebra $A$, and 
suppose that $\A$ is $\alpha$-compatible. Let $\zeta\in\Char(G)$ satisfy $\ol{A\h_\zeta}=\h$, 
and let $\eta:\Gamma(\Cl(G))^G\rightarrow B(\h)$ be a unital, 
equivariant $\ast$-homomorphism such that\\
1) $[\eta(s),a]=0$ for all $s\in\Gamma(\Cl(G))^G$ and $a\in A^G$, and\\
2) $a\eta(s)\cdot\dom(\D_\zeta)\subset\dom(\D)$ and 
$(\D \eta(s)-(-1)^{\deg s}\eta(s)\D) aP_\zeta$ is bounded on 
$\h$ for every $a\in \oplus_\chi\A_\chi$, $s\in\Gamma(\Cl(G))^G$, where 
$P_\zeta\in B(\h)$ is the orthogonal projection onto $\h_\zeta$. 

We define a $\Z_2$-graded $\ast$-homomorphism 
$\wt{\eta}:\Gamma(\Cl(G))^G\rightarrow B(\h\wh{\otimes}\C^2)$
 by $\wt{\eta}(s)=\eta(s)\wh{\otimes}\omega^{\deg s}$, where
$(\eta(s)\wh{\otimes}\omega^{\deg s})(\xi\wh{\otimes}v)=\eta(s)\xi\wh{\otimes}\omega^{\deg s}v.$
\end{defn}
It is easy to see that the pair $(\zeta,\wt{\eta})$ satisfy 
the conditions of Definition \ref{defn:theta} for the even 
$G$-equivariant spectral triple $(\A\wh{\otimes}\Cl_1,\h\wh{\otimes}\C^2,\D\wh{\otimes}\omega)$.

The next result follows easily from Theorem \ref{thm:main} 
applied to the even $G$-equivariant spectral triple 
$(\A\wh{\otimes}\Cl_1,\h\wh{\otimes}\C^2,\D\wh{\otimes}\omega)$.
\begin{thm}
\label{thm:odd}
Let $(\A,\h,\D)$ be an odd, $G$-equivariant spectral 
triple for a trivially $\Z_2$-graded 
$C^*$-algebra $A$, and let $\zeta\in\Char(G)$ and 
$\eta:\Gamma(\Cl(G))^G\rightarrow B(\h)$ be as in 
Definition \ref{defn:thetaodd}, so in particular $\A$ is $\alpha$-compatible. Let $(x_j)_{j=1}^{n}$ 
be a $G$-invariant global orthonormal frame for 
$\bs{\$}_G$. If there is some $R\in\mathbb{R}$ such that
\begin{align*}
&\sum_{j=1}^{n}\Big(\Ideal{\D\xi,\eta\big(\,\!_{\Gamma(\Cl(G))^G}(\chi^{-1}\D_G(\chi x_j)
\wh{\otimes}\bc|x_j\wh{\otimes}1)\big)P_{\chi\zeta}\xi}\\
&\qquad+\Ideal{\eta\big(\,\!_{\Gamma(\Cl(G))^G}(\chi^{-1}\D_G(\chi x_j)\wh{\otimes}\bc|x_j
\wh{\otimes}1)\big)P_{\chi\zeta}\xi,\D\xi}\Big)\geq R\|\xi\|^2
\end{align*}
for all $\chi\in\Char(G)$, $\xi\in\dom(\D)$, then the odd spectral triple $(\A,\h,\D)$ 
represents the Kasparov product of the left-hand, middle and 
right-hand modules for $(\A\wh{\otimes}\Cl_1,\h\wh{\otimes}\C^2,\D\wh{\otimes}\omega)$.
\end{thm}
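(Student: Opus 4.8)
The plan is to deduce the statement from Theorem~\ref{thm:main} applied to the associated even $G$-equivariant spectral triple $(\A\wh{\otimes}\Cl_1,\h\wh{\otimes}\C^2,\D\wh{\otimes}\omega)$, whose $KK_G$-class is by definition the odd $K$-homology class of $(\A,\h,\D)$. First I would record (as already noted just above the statement) that the pair $(\zeta,\wt{\eta})$ satisfies the conditions of Definition~\ref{defn:theta} for this even triple. Here $(A\wh{\otimes}\Cl_1)^G=A^G\wh{\otimes}\Cl_1$ and $(\h\wh{\otimes}\C^2)_\chi=\h_\chi\wh{\otimes}\C^2$ (as $\C^2$ carries the trivial $G$-action), so $\ol{(A\wh{\otimes}\Cl_1)(\h\wh{\otimes}\C^2)_\zeta}=\h\wh{\otimes}\C^2$ follows from $\ol{A\h_\zeta}=\h$; condition~1) of Definition~\ref{defn:theta} holds because $[\eta(s),a]=0$ by condition~1) of Definition~\ref{defn:thetaodd} while $\omega^{\deg s}$ graded-commutes in $B(\C^2)$ with the image of the generator of $\Cl_1$; and the boundedness clause of condition~2), namely boundedness of $[\D\wh{\otimes}\omega,\wt{\eta}(s)]_\pm\,aP_\zeta$, unwinds under $\wt{\eta}(s)=\eta(s)\wh{\otimes}\omega^{\deg s}$ to the boundedness of $\big(\D\eta(s)-(-1)^{\deg s}\eta(s)\D\big)aP_\zeta$ required by condition~2) of Definition~\ref{defn:thetaodd}.

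Granting this, Theorem~\ref{thm:main} applies to $(\A\wh{\otimes}\Cl_1,\h\wh{\otimes}\C^2,\D\wh{\otimes}\omega)$, and it remains to check that its positivity hypothesis \eqref{eq:pos} for this triple is equivalent to the one in the present statement. For each $j$ set $s_j:=\,\!_{\Gamma(\Cl(G))^G}(\chi^{-1}\D_G(\chi x_j)\wh{\otimes}\bc|x_j\wh{\otimes}1)\in\Gamma(\Cl(G))^G$, the element occurring in \eqref{eq:pos}. A degree count gives $\deg s_j=1$ in both cases: if $\dim G$ is even then $\bs{\$}_G$ is $\Z_2$-graded, $\D_G$ raises degree by one and $\deg\bc=0$, so $\chi^{-1}\D_G(\chi x_j)\wh{\otimes}\bc$ has degree $\deg x_j+1$ and $x_j\wh{\otimes}1$ has degree $\deg x_j$; if $\dim G$ is odd then $\bs{\$}_G$ is trivially graded and $\deg\bc=1$, so $\chi^{-1}\D_G(\chi x_j)\wh{\otimes}\bc$ has degree $1$ and $x_j\wh{\otimes}1$ has degree $0$. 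In either case the left inner product of the two arguments has degree $1$, hence $\wt{\eta}(s_j)=\eta(s_j)\wh{\otimes}\omega$.

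To finish, write a general $\Xi\in\dom(\D\wh{\otimes}\omega)=\dom(\D)\wh{\otimes}\C^2$ as $\Xi=\xi_1\wh{\otimes}e_1+\xi_2\wh{\otimes}e_2$ in the standard basis of $\C^2$. Using that $\omega$ is a self-adjoint unitary, that the $\h$-factor is trivially graded, and that $\langle e_1,e_2\rangle=0$, one computes
\begin{align*}
\Ideal{(\D\wh{\otimes}\omega)\Xi,\wt{\eta}(s_j)(P_{\chi\zeta}\wh{\otimes}1)\Xi}=\sum_{p=1}^{2}\Ideal{\D\xi_p,\eta(s_j)P_{\chi\zeta}\xi_p},
\end{align*}
the two factors of $\omega$ cancelling in the $\C^2$-inner product. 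Adding the conjugate term and summing over $j$, condition~\eqref{eq:pos} for the even triple becomes $\sum_j\sum_{p=1}^{2}\big(\Ideal{\D\xi_p,\eta(s_j)P_{\chi\zeta}\xi_p}+\Ideal{\eta(s_j)P_{\chi\zeta}\xi_p,\D\xi_p}\big)\geq R(\|\xi_1\|^2+\|\xi_2\|^2)$ for all $\chi\in\Char(G)$ and all $\xi_1,\xi_2\in\dom(\D)$; setting $\xi_2=0$ recovers the hypothesis of the present theorem, and conversely applying that hypothesis to $\xi_1$ and to $\xi_2$ separately and adding yields the inequality for arbitrary $\Xi$. Thus the two positivity conditions coincide, and Theorem~\ref{thm:main} gives that $(\A\wh{\otimes}\Cl_1,\h\wh{\otimes}\C^2,\D\wh{\otimes}\omega)$ --- hence $(\A,\h,\D)$ in odd $K$-homology --- represents the Kasparov product of the left-hand, middle and right-hand modules. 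The only points that genuinely need care are the $\Z_2$-graded sign bookkeeping behind the first paragraph and the degree count $\deg s_j=1$; after those the argument is purely formal.
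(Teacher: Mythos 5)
Your proposal is correct and follows exactly the paper's route: the paper also deduces Theorem \ref{thm:odd} by applying Theorem \ref{thm:main} to the doubled even triple $(\A\wh{\otimes}\Cl_1,\h\wh{\otimes}\C^2,\D\wh{\otimes}\omega)$, after noting that $(\zeta,\wt{\eta})$ satisfies Definition \ref{defn:theta}. The paper leaves these verifications (the graded-commutator unwinding, the degree count $\deg s_j=1$, and the reduction of \eqref{eq:pos} to the stated inequality) as ``easy''; your write-up simply supplies them, correctly.
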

\section{The $\theta$-deformation of a $\mathbb{T}^n$-equivariant spectral triple and factorisation.}
Given a $\mathbb{T}^n$-equivariant spectral triple 
$(\A,\h,\D)$ and a skew-symmetric matrix $\theta\in M_n(\R)$, 
one can construct the $\theta$-deformed $\mathbb{T}^n$-equivariant spectral triple 
$(\A_\theta,\h_\theta,\D_\theta)$. We show that if 
factorisation is achieved for $(\A,\h,\D)$, then it is also achieved for $(\A_\theta,\h_\theta,\D_\theta)$.

We first recall the construction of a $\theta$-deformed 
$\mathbb{T}^n$-equivariant spectral triple, \cites{ConnesLandi,Rieffel}. 
\begin{defn}
Let $\theta\in M_n(\R)$ be a skew-symmetric matrix. 
The \emph{noncommutative torus} $C(\mathbb{T}^n)_\theta$ 
is the universal $C^*$-algebra generated by $n$ unitaries $U_1,\ldots,U_n$ 
subject to the commutation relations $U_jU_k=e^{2\pi i\theta_{jk}}U_kU_j$ for $j,k=1,\ldots,n$. 
\end{defn}
The noncommutative torus $C(\mathbb{T}^n)_\theta$ carries an action 
by the $n$-torus $\mathbb{T}^n$, which is given by $t\cdot U_j=e^{2\pi it^j}U_j$, where 
$t=(t^1,\ldots,t^n)\in\mathbb{T}^n$ are the standard torus coordinates.
\begin{defn}
Let $A$ be a $\Z_2$-graded $C^*$-algebra with an action 
$\alpha$ by $\mathbb{T}^n$. 
Let $\theta\in M_n(\R)$ be a skew-symmetric matrix, 
and equip the tensor product $A\wh{\otimes}C(\mathbb{T}^n)_\theta$ 
with the diagonal action 
$t\cdot (a\wh{\otimes}b)=\alpha_t(a)\wh{\otimes}(t\cdot b)$ by 
$\mathbb{T}^n$. The \emph{$\theta$-deformation} of $A$ is the invariant sub-$C^*$-algebra 
$A_\theta:=(A\wh{\otimes}C(\mathbb{T}^n)_\theta)^{\mathbb{T}^n}$.

The $\theta$-deformation $A_\theta$ carries an action $\alpha^{(\theta)}$ by $\mathbb{T}^n$, 
given by $\alpha^{(\theta)}_t(a\wh{\otimes} b)=\alpha_t(a)\wh{\otimes}b.$ 
\end{defn}
\begin{defn}
Let $\h=\h^0\oplus\h^1$ be a $\Z_2$-graded Hilbert space with a strongly continuous 
unitary representation $V:\mathbb{T}^n\rightarrow U(\h)$ such that $V_t\cdot\h^j\subset\h^j$
 for $t\in\mathbb{T}^n$, $j\in\Z_2$. Let $\theta\in M_n(\R)$ be a skew-symmetric matrix. 
 Viewing $C(\mathbb{T}^n)_\theta$ as a right Hilbert module over itself, form the 
 $\Z_2$-graded right Hilbert $C(\mathbb{T}^n)_\theta$-module 
 $\h\wh{\otimes}C(\mathbb{T}^n)_\theta$. This module carries an action by 
 $\mathbb{T}^n$, given by
  $t\cdot (\xi\wh{\otimes}b)=V_t\xi\wh{\otimes}(t\cdot b).$ 
The \emph{$\theta$-deformation} of $\h$ is the $\Z_2$-graded Hilbert space 
$\h_\theta:=(\h\wh{\otimes}C(\mathbb{T}^n)_\theta)^{\mathbb{T}^n}.$ 
We define a unitary represenation 
$V^{(\theta)}:\mathbb{T}^n\rightarrow U(\h_\theta)$ by 
$V^{(\theta)}_t(\xi\wh{\otimes}b)=V_t\xi\wh{\otimes}b.$
\end{defn}
We can now define the $\theta$-deformed $\mathbb{T}^n$-equivariant 
spectral triple $(\A_\theta,\h_\theta,\D_\theta)$.
\begin{defn}
Suppose that $\A$ is $\alpha$-compatible.
Let $(\A,\h,\D)$ be a $\mathbb{T}^n$-equivariant spectral triple, and let $\theta\in M_n(\R)$ be skew-symmetric. Represent $A_\theta$ on $\h_\theta$ by 
$(a\wh{\otimes} b)(\xi\wh{\otimes}c)=a\xi\wh{\otimes}bc$ (for $a\in A$, $b\in C(\mathbb{T}^n)_\theta$), 
and setting $U^k:=U_1^{k_1}\cdots U_n^{k_n}$ for $k\in\Z^n$, let 
$$
\A_\theta=\Span\{a_k\wh{\otimes}U^{-k}\in A_\theta:a_k\in\A\cap A_k,\ k\in\Z^n\}
$$
which is a dense sub-$\ast$-algebra of $A_\theta$ compatible with $\alpha^{(\theta)}$, 
and define an operator 
$\D_\theta$ on $\h_\theta$ by $\D_\theta(\xi\wh{\otimes}b)=\D\xi\wh{\otimes}b$ for $\xi\in\dom(\D).$ 
Then $(\A_\theta,\h_\theta,\D_\theta)$ is a $\mathbb{T}^n$-equivariant spectral triple for $A_\theta$, 
which we call the \emph{$\theta$-deformation} of $(\A,\h,\D)$.
\end{defn}
\begin{prop}
Let $A$ be a $C^*$-algebra with an action by $\mathbb{T}^n$, and let $\theta\in M_n(\R)$ be skew-symmetric. Then $A_\theta$ satisfies the spectral subspace assumption if and only if $A$ does.
\end{prop}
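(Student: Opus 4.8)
The plan is to show that $\theta$-deformation changes neither the fixed point algebra nor the ideals $\ol{A_kA_k^*}$ in any essential way: there is a canonical $\ast$-isomorphism $(A_\theta)^{\mathbb{T}^n}\cong A^G$ under which $\ol{(A_\theta)_k(A_\theta)_k^*}$ corresponds to $\ol{A_kA_k^*}$ for every $k\in\Z^n$, and then the statement follows because being a complemented ideal is an intrinsic property of the pair (ideal, ambient $C^*$-algebra), hence preserved by $\ast$-isomorphisms.

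First I would identify the spectral subspaces of $A_\theta$ for the action $\alpha^{(\theta)}$. Identifying $\Char(\mathbb{T}^n)$ with $\Z^n$ and writing $A_k$ for the $k$-th spectral subspace of $A$, decompose $A\wh{\otimes}C(\mathbb{T}^n)_\theta$ under the product action of $\mathbb{T}^n\times\mathbb{T}^n$: since the $l$-spectral subspace of $C(\mathbb{T}^n)_\theta$ under its dual action is the line $\C U^l$, the $(j,l)$-spectral subspace of $A\wh{\otimes}C(\mathbb{T}^n)_\theta$ is $A_j\wh{\otimes}\C U^l$, and the diagonal-invariant subalgebra is $A_\theta=\ol{\bigoplus_{k\in\Z^n}A_k\wh{\otimes}\C U^{-k}}$. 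On the summand $A_k\wh{\otimes}\C U^{-k}$ the deformed action $\alpha^{(\theta)}_t(a\wh{\otimes}U^{-k})=\alpha_t(a)\wh{\otimes}U^{-k}$ acts through the character $k$, so $(A_\theta)_k=A_k\wh{\otimes}\C U^{-k}$. In particular $(A_\theta)^{\mathbb{T}^n}=(A_\theta)_0=A^G\wh{\otimes}\C 1$, and the map $a\mapsto a\wh{\otimes}1$ is a unital $\ast$-isomorphism $A^G\to(A_\theta)^{\mathbb{T}^n}$ (it is clearly a $\ast$-homomorphism onto $(A_\theta)_0$, and isometric because $1$ is a unit of $C(\mathbb{T}^n)_\theta$).

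Next I would compute the ideal in the spectral subspace assumption. Since $U^{-k}$ is unitary, for $a,b\in A_k$ one has $(a\wh{\otimes}U^{-k})(b\wh{\otimes}U^{-k})^*=ab^*\wh{\otimes}1$, so the linear span of $(A_\theta)_k(A_\theta)_k^*$ is $A_kA_k^*\wh{\otimes}\C 1$ and hence $\ol{(A_\theta)_k(A_\theta)_k^*}=\ol{A_kA_k^*}\wh{\otimes}\C 1$; under the $\ast$-isomorphism $(A_\theta)^{\mathbb{T}^n}\cong A^G$ above, this is precisely the ideal $\ol{A_kA_k^*}$ of $A^G$. A complement of $\ol{A_kA_k^*}$ in $A^G$ therefore maps to a complement of $\ol{(A_\theta)_k(A_\theta)_k^*}$ in $(A_\theta)^{\mathbb{T}^n}$ and conversely, and running over all $k\in\Z^n$ (i.e.\ over all characters of $\mathbb{T}^n$) gives the equivalence. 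I do not expect a real obstacle here; the only content is the spectral subspace identification of the second paragraph, and the mild bookkeeping of reordering monomials $U^{\pm k}$ using $U_jU_k=e^{2\pi i\theta_{jk}}U_kU_j$ produces only invertible scalars, which affect neither linear spans nor their norm closures, so they cause no difficulty.
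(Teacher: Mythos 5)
Your argument is correct and is essentially the paper's own proof: the paper simply notes that $\psi(a)=a\wh{\otimes}1$ is a $\ast$-isomorphism $A^{\mathbb{T}^n}\to A_\theta^{\mathbb{T}^n}$ with $\psi(\ol{A_kA_k^*})=\ol{(A_\theta)_k(A_\theta)_k^*}$ for all $k\in\Z^n$, which is exactly your route. You merely supply the details the paper leaves implicit, namely the identification $(A_\theta)_k=A_k\wh{\otimes}\C U^{-k}$ and the computation $(a\wh{\otimes}U^{-k})(b\wh{\otimes}U^{-k})^*=ab^*\wh{\otimes}1$.
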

\begin{proof}
Let $\psi:A^{\mathbb{T}^n}\rightarrow A_\theta^{\mathbb{T}^n}$ be the $\ast$-isomorphism
 $\psi(a)=a\wh{\otimes}1$. Then $\psi(\ol{A_kA_k^*})=\ol{(A_\theta)_k(A_\theta)_k^*}$ 
 for all $k\in\Z^n$. 
 \end{proof}
\begin{defn}
Define a unitary isomorphism $u:\h\rightarrow\h_\theta$ 
by $u\left(\sum_{k\in\Z^n}\xi_k\right)=\sum_{k\in\Z^n}\xi_k\wh{\otimes}U^{-k}$. 
This isomorphism intertwines the actions of $\mathbb{T}^n$, 
so that $u:\h_\ell\rightarrow(\h_\theta)_\ell$ for all $\ell\in\Z^n$.

Given $\eta:\Gamma(\Cl(\mathbb{T}^n))^{\mathbb{T}^n}\rightarrow B(\h)$, define 
$\eta_\theta:\Gamma(\Cl(\mathbb{T}^n))^{\mathbb{T}^n}\rightarrow B(\h_\theta)$ by
$\eta_\theta(s)=u\circ\eta(s)\circ u^*$.
\end{defn}
\begin{prop}
The pair $(\ell,\eta_\theta)$ satisfies the conditions of 
Definition \ref{defn:theta} for $(\A_\theta,\h_\theta,\D_\theta)$ if and only if 
$(\ell,\eta)$ satisfies those conditions for $(\A,\h,\D)$. Consequently 
$(\A_\theta,\h_\theta,\D_\theta)$ factorises if and only if $(\A,\h,\D)$ does.
\end{prop}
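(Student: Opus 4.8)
The plan is to transport all the data defining the factorisation along the unitary $u\colon\h\to\h_\theta$ and the $\ast$-isomorphism $\psi\colon A^{\T^n}\to A_\theta^{\T^n}$, $\psi(a)=a\wh{\otimes}1$, and verify that each condition of Definition \ref{defn:theta} is preserved term by term. First I would record the basic compatibilities: $u$ intertwines the $\T^n$-actions, so $u(\h_k)=(\h_\theta)_k$ for all $k\in\Z^n$; the operator $\D_\theta$ is by construction $u\D u^*$ on the relevant domain, so $u$ restricts to a unitary $\h_\ell\to(\h_\theta)_\ell$ carrying $\D_\ell$ to $(\D_\theta)_\ell$; and $\A_\theta$ is $\alpha^{(\theta)}$-compatible with $(\A_\theta)_k\cong\A_k$ precisely when $\A$ is $\alpha$-compatible, which is immediate from the definition of $\A_\theta$. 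Next I would check the condition $\ol{A_\theta(\h_\theta)_\ell}=\h_\theta$. Writing a general element of $A_\theta$ as a limit of sums $\sum_k a_k\wh{\otimes}U^{-k}$ with $a_k\in A_k$, and a general element of $(\h_\theta)_\ell$ as $u\xi$ with $\xi\in\h_\ell$, one computes $(a_k\wh{\otimes}U^{-k})(u\xi)=u(a_k\xi)$ up to the usual scalar phase coming from the commutation relations in $C(\T^n)_\theta$; since the phases are unitary and $u$ is surjective, $\ol{A_\theta(\h_\theta)_\ell}=u\big(\ol{A\h_\ell}\big)=u(\h)=\h_\theta$ iff $\ol{A\h_\ell}=\h$.

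With these in hand, the bulk of the argument is the verification that $\eta_\theta(s)=u\,\eta(s)\,u^*$ satisfies conditions (1) and (2) of Definition \ref{defn:theta} for $(\A_\theta,\h_\theta,\D_\theta)$ iff $\eta$ does for $(\A,\h,\D)$. That $\eta_\theta$ is a unital, equivariant, $\Z_2$-graded $\ast$-homomorphism is formal, since conjugation by the unitary $u$ is a $\ast$-isomorphism of $B(\h)$ onto $B(\h_\theta)$ and $u$ is equivariant and even. For condition (1), for $a\in A^{\T^n}$ one has $a\wh{\otimes}1$ acting on $\h_\theta$ as $u a u^*$ — here I use that the map $A\ni a\mapsto(a\wh{\otimes}1)|_{\h_\theta}$ agrees with $u(a\cdot)u^*$ on $\h$, which is where the $\theta$-twist cancels because $a$ is $\T^n$-fixed — and hence $[\eta_\theta(s),a\wh{\otimes}1]_\pm=u[\eta(s),a]_\pm u^*$, which vanishes iff the untwisted graded commutator does. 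For condition (2), the key observation is that $u$ maps $\dom(\D_\zeta)$ onto $\dom((\D_\theta)_\zeta)$, maps $\dom(\D)$ onto $\dom(\D_\theta)$, and intertwines $\D$ with $\D_\theta$; moreover the spectral projection $P_\zeta\in B(\h)$ is carried to the corresponding projection in $B(\h_\theta)$. Hence for $a\in\oplus_k\A_k$, $a\eta_\theta(s)\cdot\dom((\D_\theta)_\zeta)\subset\dom(\D_\theta)$ and $[\D_\theta,\eta_\theta(s)]_\pm\,(a\wh{\otimes}1)\,P_\zeta^{(\theta)}=u\,[\D,\eta(s)]_\pm\,a\,P_\zeta\,u^*$ (again the twist cancels on the fixed element $s$ and is absorbed into $u$ on $a$), so one side is bounded iff the other is. I expect the main fiddly point to be keeping the bookkeeping of the $C(\T^n)_\theta$-commutation phases straight when passing between the external tensor product picture and the conjugation-by-$u$ picture; this is genuinely a routine but notation-heavy check, analogous to the computation in the preceding proposition showing $\psi(\ol{A_kA_k^*})=\ol{(A_\theta)_k(A_\theta)_k^*}$.

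Finally, I would invoke Theorem \ref{thm:main}: since $u$ is a unitary intertwining $\D$ with $\D_\theta$, carrying $P_{\chi\zeta}$ to $P_{\chi\zeta}^{(\theta)}$, and conjugating $\eta$ to $\eta_\theta$, the positivity expression \eqref{eq:pos} for $(\A_\theta,\h_\theta,\D_\theta,\zeta,\eta_\theta)$ equals that for $(\A,\h,\D,\zeta,\eta)$ evaluated at $u^*\xi$ — the Clifford-algebra factor $\,\!_{\Gamma(\Cl(\T^n))^{\T^n}}(\chi^{-1}\D_G(\chi x_j)\wh{\otimes}\bc|x_j\wh{\otimes}1)$ is untouched by the deformation since it lives on the fixed, undeformed torus $\T^n$. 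Hence inequality \eqref{eq:pos} holds for one triple with constant $R$ iff it holds for the other with the same $R$, and Theorem \ref{thm:main} then gives that $(\A_\theta,\h_\theta,\D_\theta)$ represents the Kasparov product of its left-hand, middle and right-hand modules iff $(\A,\h,\D)$ does. Combined with the earlier proposition that $A_\theta$ satisfies the spectral subspace assumption iff $A$ does, this proves the claim. $\qed$
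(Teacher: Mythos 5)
Your proposal is correct and follows essentially the same route as the paper: transport all data along the unitary $u$ and the isomorphism $\psi(a)=a\wh{\otimes}1$, note that the $\theta$-twist only produces $U(1)$ phases which are harmless for density, boundedness and the positivity estimate, and conclude via Theorem \ref{thm:main}. The only slip is notational: in checking Condition (2) the deformed algebra element should be $a_k\wh{\otimes}U^{-k}\in(\A_\theta)_k$ rather than $a\wh{\otimes}1$ (which lies in $A_\theta$ only for invariant $a$), and the intertwining identity then holds only up to a $U(1)$ phase depending on $k$, $\ell$ and $\theta$ --- exactly as recorded in the paper's proof --- which does not affect the equivalence of boundedness.
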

\begin{proof}
If $\xi\wh{\otimes}U^{-\ell}\in(\h_\theta)_\ell$ and $a\wh{\otimes}U^{-k}\in(A_\theta)_k$, then 
$(a\wh{\otimes}U^{-k})(\xi\wh{\otimes}U^{-\ell})=\lambda a\xi\wh{\otimes} U^{-k-\ell}$
for some  $\lambda\in U(1)$. Hence $\ol{A_\theta(\h_\theta)_\ell}=\h$ if and only if $\ol{A\h_\ell}=\h$. 

Recall the $\ast$-isomorphism $\psi:A^{\mathbb{T}^n}\rightarrow A_\theta^{\mathbb{T}^n}$, 
$\psi(a)=a\wh{\otimes}1$. Then $u(a\xi)=\psi(a)u(\xi)$ for all $a\in A^{\mathbb{T}^n}$, $\xi\in\h$. 
Hence $u\circ [\eta(s),a]_\pm\circ u^*=[\eta_\theta(s),\psi(a)]_\pm$
for all $s\in\Gamma(\Cl(\mathbb{T}^n))^{\mathbb{T}^n}$, $a\in A^{\mathbb{T}^n}$, so 
Condition (1) is satisfied for the $\theta$-deformation 
if and only if it is satisfied for the original spectral triple.

By construction, $\oplus_k(\A_\theta)_k=\A_\theta$. Let $a\wh{\otimes}U^{-k}\in (\A_\theta)_k$ 
and let $s\in\Gamma(\Cl(\mathbb{T}^n))^{\mathbb{T}^n}$. 
If $\xi\wh{\otimes}U^{-\ell}\in(\h_\theta)_\ell$ then 
$u^*\big((a\wh{\otimes} U^{-k})\eta_\theta(s)(\xi\wh{\otimes}U^{-\ell})\big)=\lambda a\eta(s)\xi$ 
for some $\lambda\in U(1)$. Since $\D_\theta=u\circ \D\circ u^*$, 
it follows that $a\eta(s)\cdot\dom(\D_\ell)\subset\dom(\D)$ for all 
$a\in\oplus_k\A_k$, $s\in\Gamma(\Cl(\mathbb{T}^n))^{\mathbb{T}^n}$ if and only if 
$b\eta_\theta(s)\cdot\dom((\D_\theta)_\ell)\subset\dom(\D_\theta)$ for all 
$b\in\A_\theta$, $s\in\Gamma(\Cl(\mathbb{T}^n))^{\mathbb{T}^n}$.

Let $a\wh{\otimes}U^{-k}\in(\A_\theta)_k$, and let 
$s\in\Gamma(\Cl(\mathbb{T}^n))^{\mathbb{T}^n}$. Then 
$$
u^*\circ[\D_\theta,\eta_\theta(s)]_\pm(a\wh{\otimes}U^{-k}) P_\ell\circ u=\lambda [\D,\eta(s)]_\pm a P_\ell
$$ 
for some $\lambda\in U(1)$ depending on $k$, $\ell$ and $\theta$. 
Therefore $(\ell,\eta)$ satisfies Condition (2) if and only if 
$(\ell,\eta_\theta)$ satisfies Condition (2).

Since $\D_\theta=u\circ \D\circ u^*$ and $\eta_\theta=u\circ \eta\circ u^*$, clearly the factorisation criterion (Theorems \ref{thm:main}, \ref{thm:odd}) is satisfied for $(\ell,\eta_\theta)$ and the $\theta$-deformed spectral triple if and only if it is satisfied for $(\ell,\eta)$ and the original spectral triple.
\end{proof}
\section{Factorisation of a torus-equivariant Dirac-type operator over a compact manifold.}\label{sect:manifoldfact}
Throughout this section, let $(M,g)$ be a compact 
Riemannian manifold with a smooth, free, isometric 
left action by the $n$-torus $\mathbb{T}^n$, and let 
$S$ be a (possibly $\Z_2$-graded) 
$\mathbb{T}^n$-equivariant Clifford module over 
$M$ equipped with a $\mathbb{T}^n$-invariant 
Clifford connection $\nabla^S$, \cite{BerlineGetzlerVergne}*{p.186}. 
Then $(C^\infty(M),L^2(S),\D)$ is a $\mathbb{T}^n$-equivariant 
spectral triple, where $\D$ is the associated Dirac operator on $S$. 
The spectral triple is even if $S$ is $\Z_2$-graded; otherwise it is odd.

We will show that $(C^\infty(M),L^2(S),\D)$ can always be factorised. 
We show that each  condition (the spectral subspace assumption, 
the map $\eta:\Gamma(\Cl(\mathbb{T}^n))^{\mathbb{T}^n}\rightarrow B(L^2(S))$ 
and the positivity criterion) is satisfied in turn. Compatibility of $C^\infty(M)$ with the
action is satisfied since we assume the action to be smooth.

We show that we in fact have full spectral subspaces, which is a special case of the SSA.
\begin{prop}\label{prop:fullSS}
Let $N$ be a manifold with a smooth free left action by the $n$-torus $\mathbb{T}^n$. Then 
$C_0(N)$ has full spectral subspaces; i.e. $\ol{C_0(N)_kC_0(N)_k^*}=C_0(N)^{\mathbb{T}^n}$ 
for all $k\in\Z^n$.
\end{prop}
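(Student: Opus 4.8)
The plan is to work locally and then patch. Fix $k \in \Z^n$. The statement $\ol{C_0(N)_k C_0(N)_k^*} = C_0(N)^{\T^n}$ amounts to showing that for every point $x \in N$ and every invariant function vanishing appropriately near the orbit of $x$, one can approximate $C_0(N)^{\T^n}$-elements by finite sums $\sum f_i g_i^*$ with $f_i, g_i \in C_0(N)_k$. Because the action is free, every orbit $\T^n \cdot x$ has an invariant tubular neighbourhood $U$ equivariantly diffeomorphic to $\T^n \times W$ for some slice $W$, with $\T^n$ acting by translation on the first factor; the quotient $U/\T^n \cong W$ is an open subset of $N/\T^n$. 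On such a trivialising neighbourhood, a function in $C_0(U)_k$ is exactly one of the form $(t,w)\mapsto e^{2\pi i k\cdot t}\,h(w)$ with $h \in C_0(W)$, and products $f g^*$ for two such functions give precisely $e^{2\pi i k\cdot t} h_1(w)\cdot \overline{e^{2\pi i k\cdot t} h_2(w)} = h_1(w)\overline{h_2(w)}$, i.e. an arbitrary element of $C_0(W) \cong C_0(U)^{\T^n}$ (using that $C_0(W)$ is generated as a closed span by such products, e.g. via an approximate identity $h_2$).

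The first step, then, is to prove the local statement: on each $\T^n$-trivialising open set $U$, $\ol{C_0(U)_k\,C_0(U)_k^*} = C_0(U)^{\T^n}$, which by the computation above reduces to the trivial fact that $\ol{C_0(W)\,C_0(W)^*} = C_0(W)$. The second step is the globalisation. Cover $N$ by invariant trivialising open sets $\{U_\lambda\}$ and take a $\T^n$-invariant partition of unity $\{\psi_\lambda\}$ subordinate to this cover (invariant partitions of unity exist by averaging an ordinary one over the compact group $\T^n$; note each $\psi_\lambda \in C_0(N)^{\T^n}$). Given $\phi \in C_0(N)^{\T^n}$, write $\phi = \sum_\lambda \psi_\lambda \phi$ with each $\psi_\lambda \phi$ supported in $U_\lambda$; apply the local result to approximate $\psi_\lambda \phi$ by sums $\sum_i f_i^{(\lambda)} (g_i^{(\lambda)})^*$ with $f_i^{(\lambda)}, g_i^{(\lambda)} \in C_0(U_\lambda)_k \subset C_0(N)_k$; since $\phi$ is a finite-or-norm-convergent sum of such pieces (finite when $\phi$ has compact support, which is dense), this shows $\phi \in \ol{C_0(N)_k\,C_0(N)_k^*}$.

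I expect the only genuine point requiring care — the ``main obstacle'' — is the existence and equivariance of the tubular neighbourhood structure, i.e. the slice theorem for a free action of $\T^n$: that every orbit has an invariant neighbourhood equivariantly diffeomorphic to $\T^n \times W$. This is standard (the action being free and proper, $N \to N/\T^n$ is a principal $\T^n$-bundle, hence locally trivial), but it is the geometric input that makes the local computation of spectral subspaces transparent. Everything else — the description of $C_0(U)_k$ on a trivial bundle chart, the partition-of-unity patching, and the reduction to $\ol{C_0(W)C_0(W)^*}=C_0(W)$ — is routine. One should also remark that $C_0(N)_k$ is genuinely nonzero for every $k$ precisely because the bundle is (locally) trivial with structure group all of $\T^n$, so the argument uses freeness essentially and not merely the weaker SSA.
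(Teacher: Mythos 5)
Your argument is correct, and it rests on the same geometric input as the paper's (a free and proper action makes $N\to N/\T^n$ a principal $\T^n$-bundle, so over a trivialising chart the spectral subspace consists exactly of functions $h(w)\chi_k(t)$, up to a harmless sign convention on $k$, and products $fg^*$ sweep out $C_0(W)$). Where you diverge is in the globalisation step. The paper does not patch at all: it observes that $\ol{C_0(N)_kC_0(N)_k^*}$ is a closed ideal in the commutative $C^*$-algebra $C_0(N)^{\T^n}\cong C_0(N/\T^n)$, and closed ideals there correspond to closed subsets of the quotient, so the ideal is everything as soon as for each point $x\in N/\T^n$ one exhibits a single product $ab^*$ with $a,b\in C_0(N)_k$ not vanishing on $\pi^{-1}(\{x\})$ --- which the local trivialisation provides immediately. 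Your route instead approximates an arbitrary invariant function directly, via an invariant partition of unity subordinate to a trivialising cover and the density of compactly supported invariant functions; this is correct (the supports of the averaged partition functions stay inside the invariant sets $U_\lambda$, local finiteness gives finite sums on compacta, and $C_0(U_\lambda)_k\subset C_0(N)_k$ by extension by zero), but it carries bookkeeping that the ideal-theoretic reduction makes unnecessary. What your version buys is a more constructive approximation and no appeal to the ideal/closed-subset correspondence; what the paper's buys is brevity, since pointwise nonvanishing of one product per orbit is all that has to be checked.
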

\begin{proof}
The closed ideals of $C_0(N/\mathbb{T}^n)$ are in one-to-one correspondence 
with the closed subspaces of $N/\mathbb{T}^n$, where the ideal corresponding 
to $X\subset N/\mathbb{T}^n$ is $\{f\in C_0(N/\mathbb{T}^n):f|_X=0\}$. 
Since $\ol{C_0(N)_kC_0(N)_k^*}$ is an ideal in $C_0(N)^{\mathbb{T}^n}\cong C_0(N/\mathbb{T}^n)$, 
it is enough to show that for all $x\in N/\mathbb{T}^n$, there is some $a,b\in C_0(N)_k$ 
such that $ab^*|_{\pi^{-1}(\{x\})}\neq0$, where $\pi:N\rightarrow N/\mathbb{T}^n$ is the quotient map.

Since the action of $\mathbb{T}^n$ on $N$ is proper and free, $N$ is a principal 
$\mathbb{T}^n$-bundle over $N/\mathbb{T}^n$. Let $x\in N/\mathbb{T}^n$,
 and let $U$ be a neighbourhood of $x$ such that $\pi^{-1}(U)\cong U\times \mathbb{T}^n$ 
 as $\mathbb{T}^n$-spaces. Then there is an equivariant $\ast$-isomorphism 
 $C_0(\pi^{-1}(U))\cong C_0(U)\otimes C(\mathbb{T}^n)$. Under this 
 isomorphism, functions in $C_0(\pi^{-1}(U))_{-k}\subset C_0(N)_{-k}$ have 
 the form $a\otimes\chi_k$, where $\chi_k\in C(\mathbb{T}^n)$ is the character 
 $\chi_k(t)=e^{2\pi it\cdot k}$. Let 
 $a\otimes\chi_k$, $b\otimes\chi_k\in C_0(U)\otimes C(\mathbb{T}^n)_{-k}$ 
be functions such that $a(x),b(x)\neq0$. 
Then $(a\otimes\chi_k)(b\otimes\chi_k)^*=ab^*\otimes1$, 
 and $ab^*(x)\neq0$.
\end{proof}
We require a character $\ell\in\Z^n$ and a map 
$\eta:\Gamma(\Cl(\mathbb{T}^n))^{\mathbb{T}^n}\rightarrow B(L^2(S))$ 
satisfying the conditions of Definition \ref{defn:theta} 
(or Definition \ref{defn:thetaodd} if $S$ is trivially graded). 
The following lemma shows that any $\ell\in\Z^n$ satisfies the condition 
(and indeed factorisation is achieved for any choice of $\ell$).
\begin{lemma}
\label{lem:anyellworks}
Let $N$ be a Riemannian manifold 
with a smooth free left action by the $n$-torus $\mathbb{T}^n$, 
and let $F$ be an equivariant Hermitian vector bundle over $N$. 
Then $\ol{C_0(N)L^2(F)_\ell}=L^2(F)$ for all $\ell\in\Z^n$. 
\end{lemma}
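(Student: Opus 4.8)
The plan is to reduce the claim to a local statement and then use a partition of unity. Fix $\ell \in \Z^n$. Since the $\mathbb{T}^n$-action on $N$ is smooth and free (hence proper), $N$ is a principal $\mathbb{T}^n$-bundle over $N/\mathbb{T}^n$, so $N/\mathbb{T}^n$ is covered by open sets $U$ over which the bundle trivialises equivariantly, $\pi^{-1}(U) \cong U \times \mathbb{T}^n$. Over each such $U$ we also trivialise the Hermitian bundle $F$ (shrinking $U$ if necessary), so that $L^2(F)$ restricted to $\pi^{-1}(U)$ looks like $L^2(U) \otimes L^2(\mathbb{T}^n) \otimes \C^r$ with the $\mathbb{T}^n$-action only on the middle factor by translation. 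On this model the $\ell$-spectral subspace is $L^2(U) \otimes \C\chi_\ell \otimes \C^r$, where $\chi_\ell(t) = e^{2\pi i t\cdot \ell}$. I would then observe that $C_0(\pi^{-1}(U))$ contains $C_0(U) \otimes C(\mathbb{T}^n) \otimes 1$, and that multiplication of $\C\chi_{-m} \subset C(\mathbb{T}^n)$ against $\C\chi_\ell$ produces $\C\chi_{\ell-m}$, so $C_0(\pi^{-1}(U)) \cdot \big(L^2(F)|_{\pi^{-1}(U)}\big)_\ell$ densely spans all of $L^2(F)|_{\pi^{-1}(U)}$: given $f \otimes \chi_{\ell - m} \otimes v$, write it as $(f \otimes \chi_{-m} \otimes 1)\cdot(1 \otimes \chi_\ell \otimes v)$ (with the second factor approximated by compactly supported sections), and these span a dense subspace since $\{\chi_k\}_{k \in \Z^n}$ is an orthonormal basis of $L^2(\mathbb{T}^n)$.

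To globalise, choose a locally finite cover $\{U_\alpha\}$ of $N/\mathbb{T}^n$ by such trivialising sets and a subordinate partition of unity $\{\rho_\alpha\}$ on $N/\mathbb{T}^n$; pull these back to $\mathbb{T}^n$-invariant functions $\widetilde{\rho}_\alpha = \rho_\alpha \circ \pi \in C_0(N)^{\mathbb{T}^n}$ (or $C_b$, but supported in $\pi^{-1}(U_\alpha)$). For any $\psi \in L^2(F)$ we have $\psi = \sum_\alpha \widetilde{\rho}_\alpha \psi$ with convergence in $L^2$, and each $\widetilde{\rho}_\alpha \psi$ is supported in $\pi^{-1}(U_\alpha)$, where the local statement applies: $\widetilde{\rho}_\alpha \psi \in \ol{C_0(\pi^{-1}(U_\alpha)) L^2(F|_{\pi^{-1}(U_\alpha)})_\ell}$. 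Since multiplication by $\widetilde{\rho}_\alpha$ commutes with the $\mathbb{T}^n$-action, it preserves spectral subspaces, and functions in $C_0(\pi^{-1}(U_\alpha))$ extend by zero to elements of $C_0(N)$; hence each $\widetilde{\rho}_\alpha\psi \in \ol{C_0(N) L^2(F)_\ell}$, and therefore so is $\psi$.

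The only genuinely delicate point is the bookkeeping for the $L^2$-decomposition in the non-compact case: one must make sure the local trivialisations of $F$ are compatible with the Hermitian metric up to bounded distortion so that the local $L^2$-norms are uniformly equivalent to the global one on the relevant pieces, and that the partition-of-unity sum converges in $L^2(F)$. This is routine — the metric on $N$ and the Hermitian metric on $F$ give honest $L^2$-spaces, and one may always refine the cover so that each $\pi^{-1}(U_\alpha)$ carries a trivialisation isometric on the nose — but it is where all the care goes. Everything else is the elementary Fourier-analysis observation on $\mathbb{T}^n$ that any character can be obtained by multiplying $\chi_\ell$ by another character, which is exactly the same mechanism used in the proof of Proposition \ref{prop:fullSS}.
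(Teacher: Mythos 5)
Your argument is correct, and the underlying mechanism is the same as the paper's: local triviality of the principal bundle $N\to N/\mathbb{T}^n$ plus multiplication by functions of the form $f\otimes\chi_{k-\ell}$ to shift spectral subspaces, glued by an invariant partition of unity. The execution differs in two ways that are worth noting. First, the paper never trivialises $F$ at all: it only trivialises the principal bundle, and uses that multiplying a section $\xi\in\Gamma_c(F)_k$ by a function in $C_0(N)_{\ell-k}$ lands in $\Gamma_c(F)_\ell$. This is exactly what lets it sidestep the "delicate point" you flag at the end (equivariant trivialisation of $F$, invariance of the metric and measure, uniform comparison of local and global $L^2$-norms); in your version you must invoke that an equivariant bundle over a free action descends to the quotient and that the trivialisation can be taken compatible with the Hermitian and measure data, which is true but is extra work. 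Second, the paper first decomposes $L^2(F)=\bigoplus_k L^2(F)_k$ and reduces to the exact algebraic identity $C_0(N)_{k-\ell}\Gamma_c(F)_\ell=\Gamma_c(F)_k$ for compactly supported sections, so only a finite cover of $\supp\xi$ and a finite sum $\xi=\sum_i b_i(a_i\phi_i\xi)$ are needed, with no $L^2$-limits to control; your route works directly with $L^2$ sections, a locally finite cover and a countable partition-of-unity sum, which forces the dominated-convergence and norm-comparison bookkeeping. Both are valid; the paper's choice to stay inside $\Gamma_c(F)$ and to act only by functions is what makes its proof shorter and free of analytic caveats.
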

\begin{proof}
Since $L^2(F)=\bigoplus_{k\in\Z^n}L^2(F)_k$, it is enough to show that 
$C_0(N)_{k-\ell}L^2(F)_\ell$ is dense in $L^2(F)_k$ for all $k\in\Z^n$. 
We show that $C_0(N)_{k-\ell}\Gamma_c(F)_{\ell}=\Gamma_c(F)_k$ 
for all $k\in\Z$, which since $\Gamma_c(F)$ is dense in $L^2(F)$ proves the result.

Let $\xi\in \Gamma_c(F)_k$. Since $\xi$ has compact support, 
there is a finite collection of open sets $(U_i)_{i=1}^N$ which cover 
the support of $\xi$, such that $U_i\cong\pi(U_i)\times\mathbb{T}^n$ 
as $\mathbb{T}^n$-spaces, recalling the quotient map $\pi:N\rightarrow N/\mathbb{T}^n$. 
Let $(\phi_n)_{n=1}^N$ be an invariant 
paritition of unity for $\bigcup_{i=1}^NU_i$ subordinate to $(U_i)_{i=1}^N$. 
For each $i=1,\ldots,N$, let $f_i\in C_0(\pi(U_i))$ be a function such that 
$(f_i\circ \pi)\phi_i=f_i\circ\pi$, and let $a_i,b_i\in C_0(U_i)$ be the functions 
corresponding to $f_i\otimes\chi_{k-\ell}$ and $f_i\otimes\chi_{\ell-k}$ respectively 
under the equivariant $\ast$-isomorphism $C_0(U_i)\cong C_0(\pi(U_i))\otimes C(\mathbb{T}^n)$. 
Note that $b_ia_i\phi_i=\phi_i$ and $a_i\xi\in \Gamma_c(F)_\ell$, so
$\xi=\sum_{i=1}^N\phi_i\xi=\sum_{i=1}^Nb_ia_i\phi_i\xi_i\in C_0(N)_{k-\ell}\Gamma_c(F)_\ell.$ 
\end{proof}
We will assume that $\ell\in\Z^n$ is fixed from now on. 
This choice does not affect the factorisation. This means we could choose 
$\ell=0$ for convenience, but we will leave $\ell$ arbitrary in order to 
show that factorisation is achieved for all choices of $\ell$.

Next we define the map $\eta:\Gamma(\Cl(\mathbb{T}^n))^{\mathbb{T}^n}\rightarrow B(L^2(S))$.
First recall that the fundamental vector field 
$X^{(v)}\in \Gamma^\infty(TM)$ associated to $v\in T_e\mathbb{T}^n$ is 
$X^{(v)}_x=\left.\frac{d}{dt}\exp(tv)\cdot x\right|_{t=0}$. Since the action of the $n$-torus 
$\mathbb{T}^n$ on $M$ is free, the fundamental vector field of 
a non-zero vector in $T_e\mathbb{T}^n$ is non-vanishing.
The canonical isomorphisms 
$T_e\mathbb{T}^n\cong\Gamma(T^*\mathbb{T}^n)^{\mathbb{T}^n}$ 
and  $TM\cong T^*M$, along with the 
fundamental vector field map, give us an equivariant, $\Z_2$-graded map 
$
\Gamma(T^*\mathbb{T}^n)^{\mathbb{T}^n}\rightarrow \Gamma^\infty(T^*M).
$
However, this map need not be an isometry 
and hence need not extend to a $\ast$-homomorphism 
$\Gamma(\Cl(\mathbb{T}^n))^{\mathbb{T}^n}\rightarrow\Gamma^\infty(\Cl(M))$. 
We will modify this map to obtain a $*$-homomorphism. 
For $j=1,\ldots,n$, let $X_j\in\Gamma^\infty(TM)^{\mathbb{T}^n}$ be the 
fundamental vector field associated to $\frac{\de}{\de t^j}\in T_e\mathbb{T}^n$. 
Observe that $\{X_1(x),\ldots,X_n(x)\}$ is a linearly independent set for every $x\in M$. 
For each $x\in M$, let $W(x)=(W^{jk}(x))_{j,k=1}^n\in M_n(\R)$ be the inverse 
square root of the positive-definite matrix $(g(X_j(x),X_k(x)))_{j,k=1}^n$. 
Letting $x$ vary, we obtain functions $W^{jk}\in C^\infty(M)^{\mathbb{T}^n}$ 
for $j,k=1,\ldots,n$. Let
\begin{align}\label{eq:normalisedcovectors}
v_k=\sum_{j=1}^nX^\flat_jW^{jk}\in\Gamma^\infty(T^*M)^{\mathbb{T}^n},\quad k=1,\ldots,n,
\end{align} 
where $TM\rightarrow T^*M$, $X\mapsto X^\flat$ is the canonical isomorphism. 
Then $\{v_1(x),\ldots,v_n(x)\}$ is an orthonormal set for all $x\in M$. 
We call the functions $W^{jk}\in C^\infty(M)^{\mathbb{T}^n}$, $j,k=1,\ldots,n$ 
the \emph{normalisation functions}.
\begin{defn}
\label{def:ee-ta}
The map $\Gamma(T^*\mathbb{T}^n)^{\mathbb{T}^n}\ni dt^k\mapsto -v_k=
-\sum_{j=1}^nX^\flat_jW^{jk}\in\Gamma^\infty(T^*M)^{\mathbb{T}^n}$ 
is now not only equivariant and $\Z_2$-graded (when $S$ is $\Z_2$-graded), 
but is also an isometry. It therefore extends to a unital 
$\ast$-homomorphism 
$\eta:\Gamma(\Cl(\mathbb{T}^n))^{\mathbb{T}^n}\rightarrow \Gamma^\infty(\Cl(M))\subset B(L^2(S))$.
\end{defn}
\begin{rem}
The appearance of a minus sign in the definition of 
$\eta$ arises as follows.  The torus action on sections of the Clifford module $S$ is 
$V_{\exp(tv)}u(x)=\exp(tv)\cdot u(\exp(-tv)\cdot x)$. 
So the more natural convention  to define $\eta$ is to use the vector field
$Y^{(v)}_x=\left.\frac{d}{dt}\exp(-tv)\cdot x\right|_{t=0}=-X^{(v)}_x$. 
\end{rem}
As functions are central in the endomorphisms, 
$\eta$ satisfies Condition 1) of Definition \ref{defn:theta}, 
so it remains to check Condition 2).
Since the image of $\eta$ consists of smooth sections 
of $\Cl(M)$, $\eta(s)\cdot\dom(\D)\cap L^2(S)_\ell\subset\dom(\D)$ 
for all $s\in\Gamma(\Cl(\mathbb{T}^n))^{\mathbb{T}^n}$. 
Before showing that $[\D,\eta(s)]_\pm P_\ell$ is bounded for all 
$s\in\Gamma(\Cl(\mathbb{T}^n))^{\mathbb{T}^n}$, we prove a lemma.
\begin{lemma}\label{lem:infgen}
Let $N$ be a  
Riemannian manifold, and let $G$ be a Lie group acting 
smoothly by isometries on $N$. Let $F$ be an equivariant Hermitian 
vector bundle over $N$. This defines a 
unitary representation $V:G\rightarrow U(L^2(F))$.

 Let $v\in\mathfrak{g}$, and let $X^{(v)}\in\Gamma^\infty(TN)$ 
 be the fundamental vector field associated to $v$. Define a 
 one-parameter unitary group on $L^2(F)$ by $\gamma_v(t)=V_{\exp(tv)}$. 
 Let $A$ be the infinitesimal generator of $\gamma_v$, characterised by 
 $\gamma_v(t)=e^{itA}$. Then\\
1) $A:\Gamma^\infty(F)\rightarrow\Gamma^\infty(F)$, and\\
2) $iA+\nabla_{X^{(v)}}\in\Gamma^\infty(\End(F))$ for any connection $\nabla$ on $F$.

In particular, if $N$ is compact, then $iA+\nabla_{X^{(v)}}\in B(L^2(F))$ for any connection $\nabla$. 
\end{lemma}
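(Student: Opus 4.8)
The plan is to identify the generator $A$ explicitly as (minus $i$ times) the infinitesimal action on sections induced by the lifted flow of $X^{(v)}$, and then to compare that operator with $\nabla_{X^{(v)}}$ using that both are first order and that their difference is $C^\infty(N)$-linear. To set up the geometry: since $v\in\mathfrak{g}$ generates the one-parameter subgroup $t\mapsto\exp(tv)$, the fundamental vector field $X^{(v)}$ is complete and its flow is $\Phi_t(x)=\exp(tv)\cdot x$; the equivariant Hermitian structure on $F$ provides isometric bundle maps covering each $\Phi_t$, so that $(\gamma_v(t)u)(x)=\exp(tv)\cdot u(\Phi_{-t}(x))$, which is a smooth function of $(t,x)$ jointly. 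For $u\in\Gamma^\infty(F)$ I put $\mathcal{L}u(x):=\frac{d}{dt}\big|_{t=0}\big(\exp(tv)\cdot u(\Phi_{-t}(x))\big)$; joint smoothness of the flow and of the bundle lift shows $\mathcal{L}u\in\Gamma^\infty(F)$, and $\mathcal{L}$ preserves compact supports.

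For (1), let $u$ be a compactly supported smooth section. The sections $\gamma_v(t)u$ are all supported in the compact set $K=\bigcup_{|t|\le1}\Phi_t(\supp u)$, and on $K$ the map $(t,x)\mapsto(\gamma_v(t)u)(x)$ is jointly smooth, so the difference quotients $t^{-1}(\gamma_v(t)u-u)$ converge uniformly, hence in $L^2(F)$, to $\mathcal{L}u$. Therefore every compactly supported smooth section lies in $\dom(A)$ with $iAu=\mathcal{L}u$, which is again compactly supported and smooth; this proves (1). Moreover the compactly supported smooth sections form a dense $\gamma_v(t)$-invariant subspace of $\dom(A)$, hence a core for $A$.

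Part (2) is then a symbol computation on this core. From $\gamma_v(t)(fu)=(f\circ\Phi_{-t})\,\gamma_v(t)u$ one obtains the Leibniz rule $\mathcal{L}(fu)=-(X^{(v)}f)\,u+f\,\mathcal{L}u$, while $\nabla_{X^{(v)}}(fu)=(X^{(v)}f)\,u+f\,\nabla_{X^{(v)}}u$; adding these shows that $\mathcal{L}+\nabla_{X^{(v)}}$ is $C^\infty(N)$-linear on $\Gamma^\infty(F)$, hence is given by the action of a bundle endomorphism $T$, and testing against a local smooth frame shows $T\in\Gamma^\infty(\End(F))$. Since $iA=\mathcal{L}$ on the core, $iA+\nabla_{X^{(v)}}=T\in\Gamma^\infty(\End(F))$. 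Finally, if $N$ is compact then $\Gamma^\infty(\End(F))=\Gamma(\End(F))\subset B(L^2(F))$ and $\Gamma^\infty(F)$ is a core for $A$ on which $iA+\nabla_{X^{(v)}}$ agrees with the bounded operator $T$, so $iA+\nabla_{X^{(v)}}\in B(L^2(F))$.

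The only genuinely analytic point is the interchange, in (1), of the $L^2$-derivative defining $A$ with the pointwise $t$-derivative, and I expect this to be the crux; it is handled by the uniform smoothness of $(t,x)\mapsto(\gamma_v(t)u)(x)$ over the fixed compact set $K$, which rests on completeness of $X^{(v)}$ and smoothness of the bundle lift. The remaining steps are routine bookkeeping with the Leibniz rule and joint smoothness of flows.
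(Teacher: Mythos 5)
Your proof is correct and rests on the same essential computation as the paper's: differentiate $(\gamma_v(t)u)(x)=\exp(tv)\cdot u(\exp(-tv)\cdot x)$ at $t=0$ and observe that the resulting first-order operator differs from $-\nabla_{X^{(v)}}$ only by a zeroth-order term. The packaging differs slightly: the paper works in a local trivialisation, writing $iAu=Bu-X^{(v)}(u)$ and $\nabla_{X^{(v)}}=X^{(v)}+\omega$ locally so that the sum is visibly a local endomorphism, whereas you argue globally via the $C^\infty(N)$-linearity of $\mathcal{L}+\nabla_{X^{(v)}}$; you also supply the analytic step the paper leaves implicit, namely that the pointwise derivative computes the $L^2$-generator (uniform convergence of difference quotients on compactly supported sections, plus the invariant-core argument), which is a worthwhile addition rather than a deviation.
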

\begin{proof}
Let $u\in\Gamma^\infty(F)$. Working on a local trivialisation of $F$, 
we can view $u$ as a $\C^k$-valued function on $N$. Since 
$\gamma_v(t)u(x)=\exp(tv)\cdot u(\exp(-tv)\cdot x)$, in this trivialisation,
$$
iAu(x)=\frac{d}{dt}\gamma_v(t)u(x)\Big|_{t=0}=Bu(x)-X^{(v)}_x(u),
$$
where $B\in M_k(\C)$ is the derivative at $t=0$ of the curve 
$t\mapsto\exp(tv)\in M_k(\C)$. 
This shows 1) 
and 2), 
since if $\nabla$ is a connection then locally $\nabla_{X^{(v)}}=X^{(v)}+\omega$, 
where $\omega$ is a locally-defined $M_k(\C)$-valued function on $N$.
\end{proof}
The next result shows that the pair $(\ell,\eta)$ satisfy the remaining condition (2) of 
Definition \ref{defn:theta}. 
\begin{prop} Let $\eta$ be as in Definition \ref{def:ee-ta} and $\ell\in\Z^n$. 
Then the graded commutator
$[\D,\eta(s)]_\pm P_\ell$ is bounded for all $s\in\Gamma(\Cl(\mathbb{T}^n))^{\mathbb{T}^n}$.
\end{prop}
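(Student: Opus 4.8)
The plan is to reduce the statement to the generators $dt^1,\dots,dt^n$ of $\Gamma(\Cl(\mathbb{T}^n))^{\mathbb{T}^n}\cong\Cl_n$, and then to isolate and control the single first-order term appearing in $[\D,\eta(dt^k)]_\pm$. Two elementary facts set this up. Since $\eta$ is equivariant and $\Gamma(\Cl(\mathbb{T}^n))^{\mathbb{T}^n}$ carries the trivial action, the image of $\eta$ commutes with the $\mathbb{T}^n$-representation on $L^2(S)$; hence $\eta(s)$ commutes with every spectral projection, in particular with $P_\ell$. Also, $\eta(s)$ is a smooth section of $\Cl(M)\subset\End(S)$, so it is bounded and preserves $\dom(\D)$, and $P_\ell$ preserves $\dom(\D)$ because it commutes with $\D$.

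As $\Cl_n$ is spanned by $1$ and the monomials $dt^{k_1}\cdots dt^{k_m}$, and $[\D,\cdot]_\pm$ satisfies the graded Leibniz rule on $\dom(\D)$, I would induct on word length: for $s=s_1s_2$,
\[
[\D,\eta(s)]_\pm P_\ell=[\D,\eta(s_1)]_\pm P_\ell\,\eta(s_2)+(-1)^{\deg s_1}\eta(s_1)\,[\D,\eta(s_2)]_\pm P_\ell,
\]
where I used that $\eta(s_2)$ commutes with $P_\ell$ to move the projection through it. Since each $\eta(s_i)$ is bounded, boundedness of $[\D,\eta(s)]_\pm P_\ell$ reduces to the base cases $s=1$ (trivial) and $s=dt^k$.

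For the base case, write $\eta(dt^k)=-c(v_k)$ with $v_k=\sum_j X^\flat_jW^{jk}$ as in \eqref{eq:normalisedcovectors}, and compute in a local orthonormal frame using that $\D=\sum_a c(e^a)\nabla^S_{e_a}$, that $\nabla^S$ is a Clifford connection, and that $c(e^a)c(v_k)+c(v_k)c(e^a)=2\langle e^a,v_k\rangle$ is a scalar function:
\[
[\D,\eta(dt^k)]_\pm=-\{\D,c(v_k)\}=-\sum_a c(e^a)c(\nabla_{e_a}v_k)-2\nabla^S_{v_k^\sharp}.
\]
The first term is a globally defined smooth section of $\Cl(M)$, hence bounded on $L^2(S)$ since $M$ is compact. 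For the second, $v_k^\sharp=\sum_j W^{jk}X_j$ with $W^{jk}\in C^\infty(M)^{\mathbb{T}^n}$ bounded, so it suffices to prove that $\nabla^S_{X_j}P_\ell$ is bounded for each fundamental vector field $X_j$. This is where Lemma \ref{lem:infgen} enters: applied to the compact manifold $M$, the isometric $\mathbb{T}^n$-action and $F=S$, it gives $iA_j+\nabla^S_{X_j}\in B(L^2(S))$, where $A_j$ generates the one-parameter group $t\mapsto V_{\exp(t\,\partial/\partial t^j)}$. On $L^2(S)_\ell=\ran P_\ell$ this group acts by the scalars $e^{2\pi i\ell_j t}$, so $A_jP_\ell=2\pi\ell_j P_\ell$, whence
\[
\nabla^S_{X_j}P_\ell=(iA_j+\nabla^S_{X_j})P_\ell-2\pi i\ell_j P_\ell
\]
is bounded. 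This finishes the base case and hence the proof.

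The main obstacle is really the base case: $[\D,\eta(dt^k)]_\pm$ honestly contains the first-order term $-2\nabla^S_{v_k^\sharp}$, which is unbounded on all of $L^2(S)$, so the projection $P_\ell$ cannot be dropped. What saves the argument is that $v_k^\sharp$ is a combination of the torus-orbit vector fields, so differentiation along it agrees on each isotypic subspace with a scalar up to a zeroth-order term --- exactly the content of Lemma \ref{lem:infgen}. The reduction to generators is then routine, once one notices that equivariance of $\eta$ lets $P_\ell$ commute past $\eta(s_2)$ in the Leibniz expansion.
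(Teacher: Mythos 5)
Your proof is correct and follows essentially the same route as the paper: reduce to the generators $dt^k$, compute $\{\D,c(v_k^\flat)\}$ locally as $\mp2\nabla^S_{v_k}$ plus a smooth (hence bounded) endomorphism, and then use Lemma \ref{lem:infgen} together with $A_jP_\ell=2\pi\ell_jP_\ell$ to bound $\nabla^S_{X_j}P_\ell$. The only difference is that you spell out the reduction to generators via the graded Leibniz rule and the commutation of $\eta(s)$ with $P_\ell$, which the paper leaves implicit (and your Clifford sign convention differs harmlessly from theirs).
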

\begin{proof}
For $j=1,\ldots,n$, let $X_j$ be the fundamental vector field associated to $\frac{\de}{\de t^j}$, 
and let $v_j=\sum_{k=1}^nX_kW^{kj}$ be the normalised vector field as in Equation \eqref{eq:normalisedcovectors}. 
Let $U\subset M$ 
be an open set such that $M|_U$ is parallelisable, and choose vector fields 
$(w_1,\ldots,w_{m-n})\subset\Gamma^\infty(TU)$ (where $m:=\dim M$) 
such that $(v_1,\ldots,v_n,w_1,\ldots,w_{m-n})$ is an orthonormal frame for $TU$. 
We can locally express the Dirac operator $\D$ as 
$$
\D|_U=\sum_{j=1}^nc(v_j^\flat)\nabla^S_{v_j}+\sum_{i=1}^{m-n}c(w_i^\flat)\nabla^S_{w_i},
$$
where $v\mapsto v^\flat$ is the 
isomorphism $TM\rightarrow T^*M$ determined by the Riemannian metric, 
and $c$ denotes Clifford multiplication. 

Since $\Gamma(\Cl(\mathbb{T}^n))^{\mathbb{T}^n}$ is generated by 
$(c(dt^k))_{k=1}^n$, we need only show that the anticommutator 
$\{\D,c(v_j^\flat)\}P_\ell$ is bounded for $j=1,\ldots,n$. 
Letting $\nabla^{LC}$ be the Levi-Civita connection on $T^*M$ and using the
compatibility between $\nabla^S$ and $\nabla^{LC}$, we have
\begin{align*}
\{\D,c(v_j^\flat)\}|_U
&=\sum_{i=1}^nc(v_i^\flat)c(v_j^\flat)\nabla^S_{v_i}
+\sum_{i=1}^mc(w_i^\flat)c(v_j^\flat)\nabla^S_{w_i}
+\sum_{i=1}^nc(v_i^\flat)c(\nabla^{LC}_{v_i}v_j^\flat)\\
&\quad
+\sum_{i=1}^mc(w_i^\flat)c(\nabla^{LC}_{w_i}v_j^\flat)
+\sum_{i=1}^nc(v_j^\flat)c(v_i^\flat)\nabla^S_{v_i}
+\sum_{i=1}^mc(v_j^\flat)c(w_i^\flat)\nabla^S_{w_i}\\
&=-2\nabla^S_{v_j}
+\sum_{i=1}^nc(v_i^\flat)c(\nabla^{LC}_{v_i}v_j^\flat)
+\sum_{i=1}^mc(w_i^\flat)c(\nabla^{LC}_{w_i}v_j^\flat).
\end{align*}
The second and third terms are smooth endomorphisms which 
are independent of the choice of $(f_1,\ldots,f_{m-n})$,
and so globally 
$$
\{\D,c(v_j^\flat)\}=-2\nabla^S_{v_j}+\text{bundle endomorphism}
=-2\sum_{k=1}^nW^{kj}\nabla_{X_k}^S+\text{bundle endomorphism.}
$$
Since $M$ is compact, every endomorphism is bounded,  
and so it is enough to show that $\nabla^S_{X_j}P_\ell$ is bounded. 
By Lemma \ref{lem:infgen}, $\nabla^S_{X_j}=-iA_j+\omega$ for some 
$\omega\in\Gamma^\infty(\End(S))$, where $A_j$ is the infinitesimal generator 
of the one-parameter unitary group $s\mapsto V_{\exp(s\frac{\de}{\de t^j})}\in U(L^2(S))$. Since 
$$
\exp(s\frac{\de}{\de t^j})=(0,\ldots,0,\underbrace{s}_{j\Th},0,\ldots,0),\qquad s\in\R,
$$
$V_{\exp(s\frac{\de}{\de t^j})}=\sum_{k\in\Z^n}e^{2\pi isk_j}P_k$. 
Hence $A_j=\sum_{k\in\Z^n}2\pi k_jP_k$, and thus
$$
\nabla^S_{X_j}P_\ell=-iA_jP_\ell+\omega P_{\ell}=-2\pi i\ell_jP_{\ell}+\omega P_\ell
$$
is bounded, and so we have shown that $\{\D,c(v_j^\flat)\}P_\ell$ is bounded.
\end{proof}
Now that we have a pair $(\ell,\eta)$ satisfying the conditions of Definition 
\ref{defn:theta}, it remains to check the positivity criterion. To this end we 
derive an explicit formula for $\Psi\circ(\D_1\wh{\otimes}1)\wh{\otimes}1\circ\Psi^{-1}$, 
recalling from Equation \eqref{eq:sigh} the isomorphism 
$$
\Psi:(E_1\wh{\otimes}_{C(M)^{\mathbb{T}^n}\wh{\otimes}\bC}(C(M)^{\mathbb{T}^n}\wh{\otimes}(\Gamma(\bs{\$}_{\mathbb{T}^n})^{\mathbb{T}^n}\wh{\otimes}\bC)^*))\wh{\otimes}_{C(M)^{\mathbb{T}^n}\wh{\otimes}\Gamma(\Cl(\mathbb{T}^n))^{\mathbb{T}^n}}L^2(S)_\ell\rightarrow L^2(S).
$$
\begin{lemma}
\label{lem:d1otimes1}
For $j=1,\ldots,n$, let $X_j\in\Gamma^\infty(TM)$ be the fundamental vector field 
associated to $\frac{\de}{\de t^j}\in T_e\mathbb{T}^n$, with corresponding covector field 
$X_j^\flat$, and let $A_j$ be the infinitesimal generator of the one-parameter unitary group 
$t\mapsto V_{\exp(t\frac{\de}{\de t^j})}\in U(L^2(S))$.  
Let $W^{jk}\in C^\infty(M)^{\mathbb{T}^n}$ be the normalisation functions. 
Then
$$
\Psi\circ(\D_1\wh{\otimes}1)\wh{\otimes}1\circ\Psi^{-1}=-i\sum_{j,r=1}^nW^{rj}c(X_r^\flat)(A_j-2\pi \ell_j).
$$
\end{lemma}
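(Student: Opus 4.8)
The plan is to specialise Lemma \ref{lem:d111} to $G=\mathbb{T}^n$ and to evaluate the resulting sums explicitly, the (a priori infinite) sum over characters being the one that reassembles into the infinitesimal generators $A_j$. Write $\Char(\mathbb{T}^n)=\Z^n$ with $\chi_k(t)=e^{2\pi i k\cdot t}$, take $\zeta=\chi_\ell$, and let $(x_\alpha)$ be a $\mathbb{T}^n$-invariant orthonormal frame for the trivial flat spinor bundle $\bs{\$}_{\mathbb{T}^n}$ consisting of constant spinors; this is the frame $(x_j)$ of Lemma \ref{lem:d111}, relabelled to free up the index $j$. Since $\D_{\mathbb{T}^n}=\sum_{m=1}^n c(dt^m)\partial_{t^m}$ and $\partial_{t^m}\chi_k=2\pi i k_m\chi_k$, we have $\D_{\mathbb{T}^n}(\chi_k x_\alpha)=2\pi i\,\chi_k\sum_{m=1}^n k_m\, c(dt^m)x_\alpha$, so that $\chi_k^{-1}\D_{\mathbb{T}^n}(\chi_k x_\alpha)$ is the invariant section $\rho\!\left(2\pi i\sum_m k_m\,dt^m\right)x_\alpha$, where $\rho$ denotes the Clifford action on $\bs{\$}_{\mathbb{T}^n}$ and $2\pi i\sum_m k_m\,dt^m$ is an odd element of $\Gamma(\Cl(\mathbb{T}^n))^{\mathbb{T}^n}\cong\Cl_n$.

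Next I would carry out the Morita-bimodule bookkeeping. Using the defining relation $\wt\rho\big({}_{\Gamma(\Cl(G))^G}(w_1|w_2)\big)w_3=w_1(w_2|w_3)_{\bC}$ for the left inner product of the Morita bimodule $\Gamma(\bs{\$}_{\mathbb{T}^n})^{\mathbb{T}^n}\wh{\otimes}\bC$, the Parseval identity $\sum_\alpha x_\alpha(x_\alpha|\,\cdot\,)_{\bC}=\id$, and injectivity of $\wt\rho$, one obtains
$$
\sum_\alpha {}_{\Gamma(\Cl(\mathbb{T}^n))^{\mathbb{T}^n}}\big(\rho(v)x_\alpha\wh{\otimes}\bc^{\deg v}\,\big|\,x_\alpha\wh{\otimes}1\big)=v
$$
for every homogeneous $v\in\Gamma(\Cl(\mathbb{T}^n))^{\mathbb{T}^n}$. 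Applying this with $v=2\pi i\sum_m k_m\,dt^m$, then applying $\eta$ and using $\eta(dt^m)=-v_m=-\sum_{r=1}^n W^{rm}X_r^\flat$ from Definition \ref{def:ee-ta} (so that $\eta(dt^m)$ acts on $L^2(S)$ as $-\sum_r W^{rm}c(X_r^\flat)$), the inner summand of Lemma \ref{lem:d111} collapses to $-2\pi i\sum_{m,r=1}^n k_m\, W^{rm}\, c(X_r^\flat)$.

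It then remains to resum over $\Z^n$. With $\chi_k\zeta=\chi_{k+\ell}$ and $P_{\chi_k\zeta}=P_{k+\ell}$, Lemma \ref{lem:d111} gives
$$
\Psi\circ(\D_1\wh{\otimes}1)\wh{\otimes}1\circ\Psi^{-1}=-2\pi i\sum_{m,r=1}^n W^{rm}\,c(X_r^\flat)\sum_{k\in\Z^n}k_m\,P_{k+\ell}.
$$
Reindexing $k\mapsto k-\ell$ and invoking the identity $A_m=\sum_{k\in\Z^n}2\pi k_m P_k$ derived in the proof that $[\D,\eta(s)]_\pm P_\ell$ is bounded, we get $\sum_{k\in\Z^n}k_m\,P_{k+\ell}=\tfrac{1}{2\pi}(A_m-2\pi\ell_m)$; substituting and relabelling $m$ as $j$ yields $\Psi\circ(\D_1\wh{\otimes}1)\wh{\otimes}1\circ\Psi^{-1}=-i\sum_{j,r=1}^n W^{rj}c(X_r^\flat)(A_j-2\pi\ell_j)$, as claimed. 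All manipulations take place on the dense invariant subspace on which $\Psi^{-1}$, $(\D_1\wh{\otimes}1)\wh{\otimes}1$ and the spectral projections act as in the proof of Lemma \ref{lem:d111}, where only finitely many $P_{k+\ell}$ contribute; the invariance of $X_r^\flat$ makes the ordering of $c(X_r^\flat)$ and $A_j$ immaterial.

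I expect the only delicate point to be the Morita-bimodule identity in the odd-dimensional case, where $\bC=\Cl_1$ is $\Z_2$-graded while $\bs{\$}_{\mathbb{T}^n}$ is ungraded: one must check that the degree factor $\bc^{\deg v}$ together with the twisted left inner product still reconstruct $v$ from the orthonormal frame. The even case ($\bC=\C$, $\bc=1$) is transparent, and the remaining arithmetic — a single factor $2\pi i$ from differentiating characters, matched against the $2\pi$ in $A_m$, and the sign coming from the minus in the definition of $\eta$ — is routine.
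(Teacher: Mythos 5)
Your argument is correct and follows essentially the same route as the paper's proof: specialise Lemma \ref{lem:d111} to $G=\mathbb{T}^n$, use that the invariant frame consists of parallel spinors so $\chi_k^{-1}\D_{\mathbb{T}^n}(\chi_k x_r)=2\pi i\sum_j k_j c(dt^j)x_r$, collapse the frame sum via the Morita/left-inner-product identity (the paper phrases this as left-linearity of ${}_{\Gamma(\Cl(\mathbb{T}^n))^{\mathbb{T}^n}}(\cdot|\cdot)$ together with $\sum_r {}_{\Gamma(\Cl(\mathbb{T}^n))^{\mathbb{T}^n}}(x_r\wh{\otimes}1|x_r\wh{\otimes}1)=1$, which is equivalent to your displayed identity), apply $\eta(c(dt^j))=-\sum_r W^{rj}c(X_r^\flat)$, and resum using $A_j=2\pi\sum_k k_jP_k$ after the shift $k\mapsto k-\ell$. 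The only cosmetic difference is that you verify the frame identity directly from the defining relation of the Morita bimodule, whereas the paper invokes it implicitly; your sign and $2\pi$ bookkeeping match the paper's.
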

\begin{proof}
Let $(x_r)_{r=1}^{2^{\floor{n/2}}}$ be an invariant, global orthonormal frame for 
$\bs{\$}_{\mathbb{T}^n}$, corresponding to some orthonormal basis for $(\bs{\$}_{\mathbb{T}^n})_e$. 
By Lemma \ref{lem:d111},
\begin{align*}
\Psi\circ(\D_1\wh{\otimes}1)\wh{\otimes}1\circ\Psi^{-1}&=\sum_{k\in\Z^n}\sum_{r=1}^{2^{\floor{n/2}}}\eta\big(\,\!_{\Gamma(\Cl(\mathbb{T}{^n}))^{\mathbb{T}^n}}(\chi_k^{-1}\D_{\mathbb{T}^n}(\chi_kx_r)\wh{\otimes}\bc|x_r\wh{\otimes}1)\big)P_{k+\ell}.
\end{align*}
Since we are using the trivial flat spinor bundle over $\mathbb{T}^n$, 
$\D_{\mathbb{T}^n}x_r=0$ for all $r$, and 
$$
[\D_{\mathbb{T}^n},\chi_k]=2\pi i\sum_{j=1}^nk_j\chi_kc(dt^j).
$$
Recall that $\eta:\Gamma(\Cl(\mathbb{T}^n))^{\mathbb{T}^n}\rightarrow B(L^2(S))$ 
is defined by $c(dt^j)\mapsto -\sum_{r=1}^nc(X_r^\flat)W^{rj}$. Hence
\begin{align*}
\Psi\circ(\D_1\wh{\otimes}1)\wh{\otimes}1\circ\Psi^{-1}
&=2\pi i\sum_{k\in\Z^n}\sum_{r=1}^{2^{\floor{n/2}}}\sum_{j=1}^nk_j
\eta\big(\,\!_{\Gamma(\Cl(\mathbb{T}{^n}))^{\mathbb{T}^n}}(c(dt^j)x_r\wh{\otimes}\bc|x_r\wh{\otimes}1)\big)P_{k+\ell}\\
&=2\pi i\sum_{k\in\Z^n}\sum_{r=1}^{2^{\floor{n/2}}}\sum_{j=1}^nk_j
\eta(c(dt^j))\eta\big(\,\!_{\Gamma(\Cl(\mathbb{T}{^n}))^{\mathbb{T}^n}}(x_r\wh{\otimes}1|x_r\wh{\otimes}1)\big)P_{k+\ell}\\
&=-2\pi i\sum_{k}\sum_{j,p=1}^nk_jW^{pj}c(X_p^\flat)P_{k+\ell}
=-i\sum_{j,r=1}^nW^{rj}c(X_r^\flat)(A_j-2\pi \ell_j).\qedhere
\end{align*}
\end{proof}
\begin{thm}
The positivity criterion is satisfied; that is there is some $R\in\mathbb{R}$ such that
$$
\Ideal{\D\xi,\Psi\circ(\D_1\wh{\otimes}1)\wh{\otimes}1\circ\Psi^{-1}\xi}+\Ideal{\Psi\circ(\D_1\wh{\otimes}1)\wh{\otimes}1\circ\Psi^{-1}\xi,\D\xi}\geq R\|\xi\|^2
$$
for all $\xi\in\dom(\D)\cap\Psi(\dom((\D_1\wh{\otimes}1)\wh{\otimes}1))$.
Thus $(C^\infty(M),L^2(S),\D)$ factorises.
\end{thm}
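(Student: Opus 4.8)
The plan is to show that, writing $T:=\Psi\circ(\D_1\wh{\otimes}1)\wh{\otimes}1\circ\Psi^{-1}$, the self-adjoint operator $\D T+T\D$ is bounded below on $L^2(S)$; since $\D$ and $T$ are both self-adjoint, the asserted inequality is precisely the statement that $\D T+T\D\geq R\,\id$. The first step is to observe that $\D$ and $T$ both commute with the $\mathbb{T}^n$-action: $\D$ by equivariance, and $T$ because, by Lemma~\ref{lem:d1otimes1}, $T=-i\sum_{j,r=1}^n W^{rj}c(X_r^\flat)(A_j-2\pi\ell_j)$, in which the multiplication operators $W^{rj}$, the Clifford endomorphisms $c(X_r^\flat)$, and the generators $A_j=\sum_k 2\pi k_j P_k$ all commute with the spectral projections $P_k$, $k\in\Z^n$. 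Hence $\D T+T\D$ respects the orthogonal decomposition $L^2(S)=\bigoplus_{k\in\Z^n}L^2(S)_k$, and it is enough to bound $\Ideal{\xi_k,(\D T+T\D)\xi_k}$ below by a fixed multiple of $\|\xi_k\|^2$, \emph{uniformly} in $k$, and then sum.

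Next I would compute $(\D T+T\D)|_{L^2(S)_k}$ explicitly. On $L^2(S)_k$ the generator $A_j$ acts as the scalar $2\pi k_j$, and since $[\D,A_j]=0$ one has $\D T+T\D=-i\sum_j\{\D,c(v_j^\flat)\}(A_j-2\pi\ell_j)$, where $v_j:=\sum_r W^{rj}X_r$ is the normalised vector field (so $c(v_j^\flat)=\sum_r W^{rj}c(X_r^\flat)$). The anticommutator $\{\D,c(v_j^\flat)\}$ was shown above to equal $-2\nabla^S_{v_j}$ plus a smooth bundle endomorphism (hence bounded, as $M$ is compact); substituting $\nabla^S_{v_j}=\sum_r W^{rj}\nabla^S_{X_r}$ and then the identity $\nabla^S_{X_r}=\omega_r-iA_r$ of Lemma~\ref{lem:infgen} (with $\omega_r$ a bounded endomorphism) rewrites it as $2i\sum_r W^{rj}A_r$ plus a bounded endomorphism. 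Carrying this through, on $L^2(S)_k$ one obtains
$$
(\D T+T\D)\big|_{L^2(S)_k}=8\pi^2\Big(\sum_{r,j}W^{rj}(x)k_rk_j-\sum_{r,j}W^{rj}(x)k_r\ell_j\Big)\id+F_k,
$$
where the bounded bundle endomorphism $F_k$ collects all lower-order terms and satisfies $\|F_k\|\leq C(1+|k|)$ with $C$ independent of $k$.

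The final step is a routine estimate. As $W(x)$ is the inverse square root of the positive-definite Gram matrix $\big(g(X_j(x),X_k(x))\big)_{j,k}$ it is positive-definite at each point, so by compactness of $M$ there are constants $\lambda_0>0$, $\Lambda_0<\infty$ with $\sum_{r,j}W^{rj}(x)k_rk_j\geq\lambda_0|k|^2$ and $\big|\sum_{r,j}W^{rj}(x)k_r\ell_j\big|\leq\Lambda_0|k||\ell|$ for all $x$ and $k$, whence
$$
\Ideal{\xi_k,(\D T+T\D)\xi_k}\geq\big(8\pi^2\lambda_0|k|^2-8\pi^2\Lambda_0|\ell||k|-C(1+|k|)\big)\|\xi_k\|^2\geq -R_0\|\xi_k\|^2
$$
for a constant $R_0\geq0$ independent of $k$ (the right-hand bracket is an upward parabola in $|k|$). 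Summing over $k$ gives $\Ideal{\xi,(\D T+T\D)\xi}\geq -R_0\|\xi\|^2$, so the positivity criterion holds with $R=-R_0$; factorisation of $(C^\infty(M),L^2(S),\D)$ then follows from Theorem~\ref{thm:main}, or from Theorem~\ref{thm:odd} when $S$ is trivially graded. The part that will need care is the bookkeeping in the middle step: one must follow the Clifford sign conventions closely enough to be certain that the leading term $8\pi^2\sum_{r,j}W^{rj}k_rk_j\,\id$ is a \emph{positive} multiple of the identity — this is ultimately forced by self-adjointness of $\D T+T\D$ together with positive-definiteness of the metric along the orbit directions — and to confirm that every remaining term grows only linearly, rather than quadratically, in $k$.
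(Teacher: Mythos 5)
Your proposal is correct and follows essentially the same route as the paper: both reduce the cross term to $-i\sum_j\{\D,c(v_j^\flat)\}(A_j-2\pi\ell_j)$, identify the leading term $8\pi^2\sum_{j,p}W^{jp}k_j(k_p-\ell_p)$ on each spectral subspace $L^2(S)_k$ with errors growing only linearly in $k$, and conclude via the uniform positive lower bound on the smallest eigenvalue of $W$ (compactness of $M$) and a quadratic-in-$k$ polynomial bounded below. The only cosmetic difference is that you reuse the globally valid anticommutator identity $\{\D,c(v_j^\flat)\}=-2\nabla^S_{v_j}+\text{endomorphism}$ already established when verifying Condition (2), whereas the paper redoes the computation locally with a partition of unity and formal adjoints; the substance is the same.
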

\begin{proof}
For $j=1,\ldots,n$, let $X_j\in\Gamma^\infty(TM)$ be the fundamental 
vector field corresponding to $\frac{\de}{\de t^j}\in T_e\mathbb{T}^n$, and let 
$v_j=\sum_{p=1}^nX_pW^{pj}$ be the normalised vector field 
as in Equation \eqref{eq:normalisedcovectors}. Let 
$U\subset M$ be an open set such that $M|_U$ is parallelisable, and 
choose vector fields $(w_1,\ldots,w_{m-n})\subset\Gamma^\infty(TU)$ 
(where $m:=\dim M$) such that $(v_1,\ldots,v_n,w_1,\ldots,w_{m-n})$ is 
an orthonormal frame for $TU$. Recall that we can locally express the 
Dirac operator $\D$ as 
$$
\D|_U=\sum_{j=1}^nc(v_j^\flat)\nabla^S_{v_j}+\sum_{i=1}^{m-n}c(w_i^\flat)\nabla^S_{w_i}.
$$
Since $M$ is compact, by using a partition of unity it is enough to prove 
the positivity for sections with support in an open set $V$ with $\ol{V}\subset U$.

Let $A_j$ be the generator of the one-parameter unitary group 
$s\mapsto V_{\exp(s\frac{\de}{\de t^j})}\in U(L^2(S))$ for $j=1,\ldots,n$. 
Then for $\xi\in\dom(\D)\cap\Psi(\dom((\D_1\wh{\otimes}1)\wh{\otimes}1))$ 
with support in $V$,
\begin{align*}
&\Ideal{\D\xi,\Psi\circ(\D_1\wh{\otimes}1)\wh{\otimes}1\circ\Psi^{-1}\xi}+\Ideal{\Psi\circ(\D_1\wh{\otimes}1)\wh{\otimes}1\circ\Psi^{-1}\xi,\D\xi}\\
&=\sum_{j,p}\Ideal{c(v_j^\flat)\nabla^S_{v_j}\xi,-ic(v_p^\flat)(A_p-2\pi\ell_p)\xi}+\sum_{j,p}\Ideal{c(w_j^\flat)\nabla^S_{w_j}\xi,-ic(v_p^\flat)(A_p-2\pi\ell_p)\xi}\\
&+\sum_{j,p}\Ideal{-ic(v_p^\flat)(A_p-2\pi\ell_p)\xi,c(v_j^\flat)\nabla^S_{v_j}\xi}+\sum_{j,p}\Ideal{-ic(v_p^\flat)(A_p-2\pi\ell_p)\xi,c(w_j^\flat)\nabla^S_{w_j}\xi}.
\end{align*}
Given $X\in\Gamma^\infty(TM)$, the (formal) adjoint of $\nabla_X$ is $(\nabla^S_X)^*=-\nabla^S_X-\dive X$. 
Using the compatibility between $\nabla^S$ and the Levi-Civita connection 
$\nabla^{LC}$ on $T^*M$, we compute
\begin{align*}
&\Ideal{\D\xi,\Psi\circ(\D_1\wh{\otimes}1)\wh{\otimes}1\circ\Psi^{-1}\xi}+\Ideal{\Psi\circ(\D_1\wh{\otimes}1)\wh{\otimes}1\circ\Psi^{-1}\xi,\D\xi}=4\pi i\sum(k_j-\ell_j)\Ideal{\xi,\nabla^S_{v_j}P_k\xi}\\
&-2\pi i\sum(k_p-\ell_p)\Ideal{\xi,\big(c(\nabla^{LC}_{v_j}v_j^\flat)c(v_p^\flat)+c(v_j^\flat)c(\nabla^{LC}_{v_j}v_p^\flat)+(\dive v_j)c(v_j^\flat)c(v_p^\flat)\big)P_k\xi}\\
&-2\pi i\sum(k_p-\ell_p)\Ideal{\xi,\big(c(\nabla^{LC}_{w_j}w_j^\flat)c(v_p^\flat)+c(w_j^\flat)c(\nabla^{LC}_{w_j}v_p^\flat)+(\dive w_j)c(w_j^\flat)c(v_p^\flat)\big)P_k\xi}.
\end{align*}
Let $\omega_j=\nabla^S_{X_j}+iA_j\in\Gamma^\infty(\End(S))$, as in Lemma \ref{lem:infgen}.  Since $A_jP_k=2\pi k_jP_k$, 
\begin{align*}
&\Ideal{\D\xi,\Psi\circ(\D_1\wh{\otimes}1)\wh{\otimes}1\circ\Psi^{-1}\xi}+\Ideal{\Psi\circ(\D_1\wh{\otimes}1)\wh{\otimes}1\circ\Psi^{-1}\xi,\D\xi}\\
&=8\pi^2\sum k_p(k_j-\ell_j)\Ideal{\xi,W^{jp}P_k\xi}+4\pi i\sum(k_j-\ell_j)\Ideal{\xi,W^{jp}\omega_pP_k\xi}\\
&\quad-2\pi i\sum(k_p-\ell_p)\Ideal{\xi,\big(c(\nabla^{LC}_{v_j}v_j^\flat)c(v_p^\flat)+c(v_j^\flat)c(\nabla^{LC}_{v_j}v_p^\flat)+(\dive v_j)c(v_j^\flat)c(v_p^\flat)\big)P_k\xi}\\
&\quad-2\pi i\sum(k_p-\ell_p)\Ideal{\xi,\big(c(\nabla^{LC}_{w_j}w_j^\flat)c(v_p^\flat)+c(w_j^\flat)c(\nabla^{LC}_{w_j}v_p^\flat)+(\dive w_j)c(w_j^\flat)c(v_p^\flat)\big)P_k\xi}.
\end{align*}
We estimate:
\begin{align*}
&\Ideal{\D\xi,\Psi\circ(\D_1\wh{\otimes}1)\wh{\otimes}1\circ\Psi^{-1}\xi}+\Ideal{\Psi\circ(\D_1\wh{\otimes}1)\wh{\otimes}1\circ\Psi^{-1}\xi,\D\xi}\\
&\geq8\pi^2\sum_{j,p,k}k_p(k_j-\ell_j)\Ideal{P_k\xi,W^{jp}P_k\xi}-\sum_{p,k}|k_p-\ell_p|C_p\Ideal{P_k\xi,P_k\xi},
\end{align*}
for some constants $C_p\in[0,\infty)$, $p=1,\ldots,n$, which are based 
on the norms of the endomorphisms such as $W^{jp}\omega_p$ and 
$(\dive w_j)c(w_j^\flat)c(v_p^\flat)$ on the compact set $\ol{V}$. 

For each $x\in M$, let $\lambda(x)>0$ be the smallest eigenvalue 
of the positive-definite real matrix $(W^{jp}(x))_{p,q=1}^n$. 
Then $\sum_{j,p,q=1}^nk_jk_pW^{jp}(x)\geq \lambda(x)\sum_{j=1}^nk_j^2$, 
and so we can estimate
\begin{align*}
&\Ideal{\D\xi,\Psi\circ(\D_1\wh{\otimes}1)\wh{\otimes}1\circ\Psi^{-1}\xi}+\Ideal{\Psi\circ(\D_1\wh{\otimes}1)\wh{\otimes}1\circ\Psi^{-1}\xi,\D\xi}\\
&\geq8\pi^2\inf_{x\in M}\{\lambda(x)\}\sum_{j,k}k_j^2\|P_k\xi\|^2-8\pi^2n
\sup_{j,p}\Big\{|\ell_p|\sup_{x\in M}\{|W^{jp}(x)|\}\Big\}\sum_{r,k}|k_r|\|P_k\xi\|^2\\
&\quad-\sum_{p,k}|k_p-\ell_p|C_p\|P_k\xi\|^2\geq\sum_{k\in\Z^n}\bigg(a\sum_{j}k_j^2-b\sum_{j}|k_j|-d\sum_{j}|k_j-\ell_j|\bigg)\|P_k\xi\|^2,
\end{align*}
where we have relabelled some constants and set $d:=\sup_p\{C_p\}$. 
Since $M$ is compact, the constant $a=8\pi^2\inf_{x\in M}\{\lambda(x)\}$ 
is strictly positive, and so the function 
$$
Q:\Z^n\rightarrow\R,\qquad Q(k)=a\sum_{j}k_j^2-b\sum_{j}|k_j|-d\sum_{j}|k_j-\ell_j|
$$
is bounded from below by some $R\in\R$. Hence 
\begin{align*}
\Ideal{\D\xi,\Psi\circ(\D_1\wh{\otimes}1)\wh{\otimes}1\circ\Psi^{-1}\xi}+\Ideal{\Psi\circ(\D_1\wh{\otimes}1)\wh{\otimes}1\circ\Psi^{-1}\xi,\D\xi}\geq R\sum_{k\in\Z^n}\|P_k\xi\|^2=R\|\xi\|^2.&&&\qedhere
\end{align*}
\end{proof}
\subsection{The constructive Kasparov product for manifolds.}
Recall that $(M,g)$ is a compact Riemannian manifold with a 
free, isometric left action by $\mathbb{T}^n$, 
$(S,\nabla^S)$ is an equivariant Clifford module over $M$ with 
Dirac operator $\D$, and $\ell\in\Z^n$ is fixed.

We have seen that  $(C^\infty(M),L^2(S),\D)$ represents the 
product of the unbounded Kasparov modules 
$(\oplus_kC(M)_k,(E_1\wh{\otimes}_{C(M)^{\mathbb{T}^n}\wh{\otimes}\bC}(C(M)^{\mathbb{T}^n}\wh{\otimes}(\Gamma(\bs{\$}_{\mathbb{T}^n})^{\mathbb{T}^n}\wh{\otimes}\bC)^*))_{C(M)^{\mathbb{T}^n}\wh{\otimes}\Gamma(\Cl(\mathbb{T}^n))^{\mathbb{T}^n}},\D_1\wh{\otimes}1)$\,\, (the product of the left-hand and middle modules) and $(C^\infty(M)^{\mathbb{T}^n}\wh{\otimes}\Gamma(\Cl(\mathbb{T}^n))^{\mathbb{T}^n},\h_\ell,\D_\ell)$ (the right-hand module). We now show that the constructive Kasparov product \cites{BMS,KaadLesch,MR} 
can be used to produce a representative of the product of these two cycles. The representative thus obtained is unitarily equivalent 
to\footnote{Here we replace 
the algebra $\oplus_kC(M)_k$ by $\oplus_kC^\infty(M)_k$, and even by $C^\infty(M)$. 
The distinction between these algebras is 
unimportant for $KK$-classes, but may produce differences for (unitary 
equivalence classes of) spectral triples, 
where the choice of smooth algebra enters. We will ignore numerous subtleties involved
in the choice of smooth algebra, which is harmless in the context of 
first order differential operators on compact manifolds.} $(C^\infty(M),L^2(S),T)$ 
for some self-adjoint, first order elliptic differential operator $T$ on $S$. 
If the orbits of $\mathbb{T}^n$ are embedded isometrically into $M$, then $T$ is a bounded 
perturbation of the original operator $\D$.
\begin{defn}
Let $G$ be a compact group, and let $A$ and $B$ be $\Z_2$-graded $C^*$-algebras 
carrying respective actions $\alpha$ and $\beta$ by $G$. 
Let $E_A$ be a $\Z_2$-graded right Hilbert $A$-module with a homomorphism 
$V$ from $G$ into the invertible degree zero bounded operators on $E$ such that 
$V_g(ea)=V_g(e)\alpha_g(a)$ for all $g\in G$, $a\in A$ and $e\in E$, 
and let $(\A,F_B,T)$ be an unbounded equivariant Kasparov $A$-$B$-module. There is 
a natural action of $G$ on $E\wh{\otimes}_A\End_B(F_B)$ given by $g\cdot (e\wh{\otimes}B)=V_g(e)\wh{\otimes}U_gBU_g^{-1}$, 
where $U$ is the action of $G$ on $F_B$. 
A $T$-\emph{connection} on $E_A$ is a linear map $\nabla$ from a dense subspace 
$\E\subset E_A$ which is a right $\A$-module into 
$E\wh{\otimes}_A\End_B(F_B)$, such that $g\cdot\nabla(e)=\nabla(V_g(e))$ for all $g\in G$, $e\in E$, and
\begin{align}\label{eq:z2leibniz}
\nabla(ea)=\nabla(e)a+(-1)^{\deg e}e\wh{\otimes}[T,a]_\pm,\qquad e\in \E,\, a\in\A.
\end{align}
We define a closed operator $1\wh{\otimes}_{\nabla}T$ initially on $\Span\{e\wh{\otimes}f:e\in \E,f\in\dom(T)\}\subset E\wh{\otimes}_AF$ by
$$(1\wh{\otimes}_{\nabla}T)(e\wh{\otimes} f)=(-1)^{\deg e}e\wh{\otimes}Tf+\nabla(e)f.$$
The equivariance of $\nabla$ ensures that $1\wh{\otimes}_\nabla T$ is equivariant. We say that $\nabla$ is \emph{Hermitian} if 
$$(e_1|\nabla e_2)_{\End_B(F_B)}-(\nabla e_1|e_2)_{\End(F_B)}=(-1)^{\deg e_1}[T,(e_1|e_2)_A]_\pm,\quad e_1,\,e_2\in\E.$$
If $\nabla$ is Hermitian, then the operator $1\wh{\otimes}_\nabla T$ is symmetric.
\end{defn}
Let $x\in M$. Choose tangent vectors 
$(v_1,\ldots,v_{m-n})$ spanning $\Span\{X_1(x),\ldots,X_n(x)\}^\perp\subset T_xM$, 
where we recall that $X_j$ is the fundamental vector field associated to 
$\frac{\de}{\de t^j}\in T_e\mathbb{T}^n$. Let 
$(x^1,\ldots,x^n,y^1,\ldots,y^{m-n})$ be the geodesic 
normal coordinates around $x$ corresponding to the 
tangent vectors $(X_1(x),\ldots,X_n(x),v_1,\ldots,v_{m-n})$. 
There is a neighbourhood $U$ of $x$ such that 
$U\cong\pi(U)\times \mathbb{T}^n$ as $\mathbb{T}^n$-spaces, 
where $\pi:M\rightarrow M/\mathbb{T}^n$ is the quotient map, 
so the standard coordinates $(t^1,\ldots,t^n)\in(0,1)^n$ on 
$\mathbb{T}^n$ give us coordinates $(t^1,\ldots,t^n,y^1,\ldots,y^{m-n})$ 
in a neighbourhood of $x$. Since $g(X_j(x),v_p)=0$ and 
$X_j=\frac{\de}{\de t^j}$, it follows from the fact that a 
geodesic is orthogonal to one orbit of $\mathbb{T}^n$ 
if and only if it is orthogonal to every orbit of $\mathbb{T}^n$ 
that it intersects, \cite{Reinhart1959}*{Prop. 2}, that 
$g(\frac{\de}{\de t^j},\frac{\de}{\de y^p})=0$ on the 
coordinate chart for $j=1,\ldots,n$, $p=1,\ldots,m-n$.

Let $(U_i)_{i=1}^N$ be a finite cover of $M$ by such 
coordinate neighbourhoods, and for each 
$k\in\mathbb{Z}^n$, $i=1,\ldots,N$, define 
$\chi_{i,k}\in C^\infty(U_i)$, $\chi_{i,k}(t^1,\ldots,t^n,y^1,\ldots,y^{m-n})
=e^{-2\pi i\sum_{j=1}^nk_jt^j}$. Observe that if $g\in C(M)_k$ 
has support in $U_i$, then $g\chi_{i,k}^{-1}\in C(M)^{\mathbb{T}^n}$. 
Let $(\phi_i)_{i=1}^N$ be an invariant partition of unity 
subordinate to $(U_i)_{i=1}^N$, and for each $i=1,\ldots,N$, 
let $\psi_i\in C^\infty(M)$ be an invariant function with support in $U_i$, such that 
$\psi_i$ is 1 in a neighbourhood of $\supp\phi_i$.

Then for $f\in C(M)$,
$$
\Phi_k(f)=\sum_{i}\phi_i\psi_i\Phi_k(f)=\sum_{i}\phi_i\chi_{i,k}(\Phi_k(f)\psi_i\chi_{i,k}^{-1}).
$$
Let $(x_r)_{r=1}^{2^{\floor{n/2}}}$ be an invariant orthonormal frame for 
$\bs{\$}_{\mathbb{T}^n}$ of homogeneous degree, such that $x_1$ is of even degree 
(in the case $\bs{\$}_{\mathbb{T}^n}$ is $\Z_2$-graded). Then given
$$
\big((f\wh{\otimes}u)\wh{\otimes}(h\wh{\otimes}\ol{w})\big)\wh{\otimes}\xi\in(E_1\wh{\otimes}_{C(M)^{\mathbb{T}^n}\wh{\otimes}\bC}(C(M)^{\mathbb{T}^n}\wh{\otimes}(\Gamma(\bs{\$}_{\mathbb{T}^n})^{\mathbb{T}^n}\wh{\otimes}\bC)^*))\wh{\otimes}_{C(M)^{\mathbb{T}^n}\wh{\otimes}\Gamma(\Cl(\mathbb{T}^n))^{\mathbb{T}^n}}L^2(S)_\ell,$$ 
we may write
\begin{align}\label{eq:f}
&\big((f\wh{\otimes}u)\wh{\otimes}(h\wh{\otimes}\ol{w})\big)\wh{\otimes}\xi=\sum_{k\in\Z^n}\sum_{r=1}^{2^{\floor{n/2}}}\sum_{i=1}^N\\
&\qquad\big(\phi_i\chi_{i,k}\wh{\otimes}(\chi_kx_r\wh{\otimes}1))\wh{\otimes}(1\wh{\otimes}\ol{x_1\wh{\otimes}1})\big)\wh{\otimes}\Phi_k(f)\psi_i\chi_{i,k}^{-1} h\eta\big(\!\,_{\Gamma(\Cl(\mathbb{T}^n))^{\mathbb{T}^n}}(x_1\wh{\otimes}(x_r\wh{\otimes}1|\chi_k^{-1}p_{\chi_{k}^{-1}}u)_{\bC}|w)\big)\xi.\notag
\end{align}
Define a $\D_\ell$-connection on
$E_1\wh{\otimes}_{C(M)^{\mathbb{T}^n}\wh{\otimes}\bC}(C(M)^{\mathbb{T}^n}\wh{\otimes}(\Gamma(\bs{\$}_{\mathbb{T}^n})\wh{\otimes}\bC)^*)$ 
by
\begin{align*}
&\nabla\big((f\wh{\otimes}u)\wh{\otimes}(h\wh{\otimes}\ol{w})\big):=\sum_{k\in\Z^n}\sum_{r=1}^{2^{\floor{n/2}}}\sum_{i=1}^N(-1)^{\deg x_r}\\
&((\phi_i\chi_{i,k}\wh{\otimes}(\chi_kx_r\wh{\otimes}1))\wh{\otimes}(1\wh{\otimes}\ol{x_1\wh{\otimes}1}))\wh{\otimes}\big[\D,\Phi_k(f)\psi_i\chi_{i,k}^{-1}h\eta\big(\!\,_{\Gamma(\Cl(\mathbb{T}^n))^{\mathbb{T}^n}}(x_1\wh{\otimes}(x_r\wh{\otimes}1|\chi_{k}^{-1}p_{\chi_{k}^{-1}}u)_{\bC}|w)\big)\big]_\pm\!.
\end{align*}
That $\nabla$ is equivariant and satisfies Equation \eqref{eq:z2leibniz} follows from Equation \eqref{eq:f}. Since $\nabla$
is built from a frame, \cite{MR}, it is also Hermitian.

Writing $1\wh{\otimes}_\nabla\D_\ell=(1\wh{\otimes}1)\wh{\otimes}_\nabla\D_\ell$ and $\D_1\wh{\otimes}1=(\D_1\wh{\otimes}1)\wh{\otimes}1$ for short, the following result shows that the constructive Kasparov product yields a spectral triple.
\begin{thm}
\label{thm:i-can-add}
For $j=1,\ldots,n$, let $X_j\in\Gamma^\infty(M)$ be the fundamental 
vector field associated to $\frac{\de}{\de t^j}\in T_e\mathbb{T}^n$.
 Let $(h_{jk})_{j,k=1}^n=(g(X_j,X_k))_{j,k=1}^n$, $(h^{jk})=(h_{jk})^{-1}$,
 and let $(W^{jk})_{j,k=1}^n$ be the normalisation functions. Then
$$
\Psi\circ\Big(1\wh{\otimes}_\nabla\D_\ell
+\D_1\wh{\otimes}1\Big)\circ\Psi^{-1}=\D+\sum_{j,r=1}^n(W^{rj}-h^{rj})c(X_r^\flat)\nabla^S_{X_j}+B,
$$
where $B\in\Gamma^\infty(\End(S))$. Thus $\Psi\circ(1\wh{\otimes}_\nabla\D_\ell
+\D_1\wh{\otimes}1)\circ\Psi^{-1}$ is a first order, self-adjoint, equivariant, elliptic differential operator.
Hence
$(C^\infty(M),L^2(S),\Psi\circ(1\wh{\otimes}_\nabla\D_\ell
+\D_1\wh{\otimes}1)\circ\Psi^{-1})$ is an equivariant spectral triple representing the Kasparov product 
(which is also
represented by $(C^\infty(M),L^2(S),\D)$).
\end{thm}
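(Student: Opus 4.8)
The strategy is to evaluate the two summands of the displayed operator separately after conjugating by $\Psi$, and then to recognise the sum. The summand $\Psi\circ\big((\D_1\wh{\otimes}1)\wh{\otimes}1\big)\circ\Psi^{-1}$ is already computed in Lemma~\ref{lem:d1otimes1} to be $-i\sum_{j,r}W^{rj}c(X_r^\flat)(A_j-2\pi\ell_j)$; since $\nabla^S_{X_j}=-iA_j+\omega_j$ with $\omega_j\in\Gamma^\infty(\End(S))$ by Lemma~\ref{lem:infgen}, this equals $\sum_{j,r}W^{rj}c(X_r^\flat)\nabla^S_{X_j}$ plus a bundle endomorphism. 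The normalisation functions enter here essentially: because $W=(W^{jk})$ is the symmetric inverse square root of $(h_{jk})$ we have $\sum_j W^{rj}W^{sj}=h^{rs}$, so upon substituting $v_j=\sum_pW^{pj}X_p$ into the local expression $\D|_U=\sum_jc(v_j^\flat)\nabla^S_{v_j}+\sum_ic(w_i^\flat)\nabla^S_{w_i}$ — valid precisely because, by the coordinate choice and \cite{Reinhart1959}*{Prop.\ 2}, the orbit directions and the transverse coordinates $y^p$ are $g$-orthogonal — the orbit part of $\D$ is exactly $\sum_{j,r}h^{rj}c(X_r^\flat)\nabla^S_{X_j}$. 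Writing $\D^\perp:=\D-\sum_{j,r}h^{rj}c(X_r^\flat)\nabla^S_{X_j}$ for the resulting globally defined first-order operator, the displayed identity is equivalent to the single claim
\[
\Psi\circ\big(1\wh{\otimes}_\nabla\D_\ell\big)\circ\Psi^{-1}-\D^\perp\in\Gamma^\infty(\End(S)).
\]

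To prove this I would repeat the bookkeeping of Lemmas~\ref{lem:d111}--\ref{lem:d1otimes1}, now with the connection $\nabla$ and the operator $\D_\ell$. Starting from the expansion \eqref{eq:f} of a general element of the balanced tensor product into the standard frame pieces, apply $(1\wh{\otimes}_\nabla\D_\ell)(e\wh{\otimes}f)=(-1)^{\deg e}e\wh{\otimes}\D_\ell f+\nabla(e)f$ term by term and push the result through $\Psi$, using that $\Psi(e\wh{\otimes}f)=L_ef$ for an operator $L_e\in B(\h_\ell,\h)$ read off from \eqref{eq:sigh}. The contribution of $\nabla(e)f$ is, by the definition of $\nabla$, a sum of terms built from $[\D,\Phi_k(f)\psi_i\chi_{i,k}^{-1}h\,\eta(\cdot)]_\pm$; the crucial point is that each transition function $\Phi_k(f)\psi_i\chi_{i,k}^{-1}$ is $\mathbb{T}^n$-invariant and hence, in the chosen charts, depends only on the transverse coordinates $y^p$, so $[\D,\Phi_k(f)\psi_i\chi_{i,k}^{-1}]_\pm$ is Clifford multiplication by a covector transverse to the orbits. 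Consequently the $e\wh{\otimes}\D_\ell f$ and $\nabla(e)f$ pieces reassemble under $\Psi$ into $\D$ acting on the reconstituted section, with the orbit-direction derivative deleted — the residual orbit-direction terms being bundle endomorphisms: those from $[\D,\eta(\cdot)]_\pm$, which differs from $-2\nabla^S_{(\cdot)}$ by an endomorphism (as in the proof that $[\D,\eta(s)]_\pm P_\ell$ is bounded), and those from $\D_\ell$ acting on $\h_\ell$, where $\nabla^S_{X_j}=-2\pi i\ell_j+\omega_j$ is an endomorphism. Collecting all endomorphism terms into $B\in\Gamma^\infty(\End(S))$ and combining with the first paragraph yields the displayed formula. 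I expect this step — keeping signs, the several summation indices, and the separation of differential from endomorphism terms under control — to be the main obstacle; the underlying mechanism is transparent (the constructive product supplies the orbit directions through $\D_1$, i.e.\ with the \emph{flat} reference metric on $\mathbb{T}^n$ encoded by $W$, whereas $\D$ uses the true metric, encoded by $h$), and in particular when the orbits are isometrically embedded $(h_{jk})=\mathrm{id}$ forces $W^{rj}=h^{rj}$, so the perturbation reduces to the bounded operator $B$.

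It remains to read off the asserted properties of $T:=\Psi\circ\big(1\wh{\otimes}_\nabla\D_\ell+\D_1\wh{\otimes}1\big)\circ\Psi^{-1}$. From the formula, $T$ is a first-order differential operator on $S$ with principal symbol at $(x,\xi)$ equal (up to the scalar $i$) to $c\big(\xi+\sum_{j,r}(W^{rj}-h^{rj})(x)\,\xi(X_j)\,X_r^\flat\big)$. Splitting $\xi$ into its $g$-orthogonal transverse and orbit parts $\xi=\xi'+\xi''$, $\xi''=\sum_s\beta_sX_s^\flat$, the correction lies in the span of the $X_r^\flat$ and replaces $\xi''$ by the covector whose coefficient vector is $Wh\beta=h^{1/2}\beta$; since $h^{1/2}$ is positive definite, the covector $\tilde\xi:=\xi+\text{(correction)}$ is nonzero whenever $\xi\neq0$, so $T$ is elliptic. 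It is symmetric, since $1\wh{\otimes}_\nabla\D_\ell$ is symmetric — $\nabla$ being Hermitian, as it is built from a frame, \cite{MR} — while $\D_1\wh{\otimes}1$ is self-adjoint and $\Psi$ is unitary; an elliptic symmetric first-order differential operator on the compact $M$ is essentially self-adjoint with compact resolvent, and its graded commutators with $C^\infty(M)$ are bundle endomorphisms, hence bounded, so $(C^\infty(M),L^2(S),T)$ is an equivariant spectral triple, equivariance descending from that of $\D_1$, $\D_\ell$, $\nabla$ and $\Psi$. Finally, it represents the Kasparov product of the left-hand$\wh{\otimes}$middle and right-hand modules by the constructive product theorems of \cites{BMS,KaadLesch,MR}, whose hypotheses — a Hermitian $\D_\ell$-connection and a self-adjoint regular sum — are what we have verified; alternatively, the straight-line homotopy $\D_s:=\D+s(T-\D)$, $s\in[0,1]$, consists of symmetric elliptic first-order operators (the orbit symbol matrix $(1-s)\,\mathrm{id}+s\,h^{1/2}$ remaining positive definite), hence of self-adjoint operators with compact resolvent, giving an operator homotopy of equivariant Kasparov modules from $(C^\infty(M),L^2(S),\D)$ to $(C^\infty(M),L^2(S),T)$; since $(C^\infty(M),L^2(S),\D)$ represents that Kasparov product by the factorisation established above, so does $(C^\infty(M),L^2(S),T)$.
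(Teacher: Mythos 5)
Your overall framing coincides with the paper's: you dispose of $\D_1\wh{\otimes}1$ via Lemma \ref{lem:d1otimes1} together with Lemma \ref{lem:infgen}, you correctly reduce the theorem (using $\sum_jW^{rj}W^{sj}=h^{rs}$ and the block-diagonality of $g$ in the chosen charts) to the single claim that $\Psi\circ(1\wh{\otimes}_\nabla\D_\ell)\circ\Psi^{-1}$ equals $\D-\sum_{j,r}h^{rj}c(X_r^\flat)\nabla^S_{X_j}$ modulo $\Gamma^\infty(\End(S))$, and your ellipticity/symmetry/essential self-adjointness endgame is the paper's (your symbol computation via $Wh=h^{1/2}$ is a correct variant of the paper's rewriting of the operator in the $(t,y)$ coordinates). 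The genuine gap is that this reduced identity --- the entire content of the theorem --- is not proved: you announce you ``would repeat the bookkeeping'' and substitute a heuristic that does not survive scrutiny. In the paper's computation the reassembly into $\D$ minus the $h$-metric orbit derivative is produced wholly by the $e\wh{\otimes}\D_\ell f$ half of $1\wh{\otimes}_\nabla\D_\ell$ (through the commutators $[\D,\psi_i\chi_{i,\ell-k}]$ and $\sum_i\phi_i=1$), and the entire $\nabla(e)f$ half (the term $I$ of Equation \eqref{eq:firstterm}) must then be shown to lie in $\Gamma^\infty(\End(S))$; this is not at all automatic, contrary to your picture in which the two halves ``jointly reassemble''.

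Concretely, your two reasons for the harmlessness of the $\nabla(e)f$ contribution do not suffice. First, the commutators there involve the invariant functions $\psi_j\chi_{j,k-\ell}\psi_i\chi_{i,k-\ell}^{-1}$; for each fixed $k$ their differentials are indeed transverse covectors, but on chart overlaps these functions are $e^{-2\pi i\sum_s(k_s-\ell_s)\tau^s(y)}$ times cutoffs, so the resulting Clifford multiplications grow linearly in $k$, and summed over $k$ against $P_k\xi$ they are a priori first-order, unbounded contributions with transverse Clifford coefficients --- not endomorphisms, and not of the shape appearing in the stated formula; one must show they cancel. Second, the orbit-derivative parts of $[\D,\eta(\cdot)]_\pm$ do not become bounded merely because ``they act on $\h_\ell$'': in $I$ they occur sandwiched between $\eta$ applied to the frame inner products $\,\!_{\Gamma(\Cl(\mathbb{T}^n))^{\mathbb{T}^n}}(x_r\wh{\otimes}1|x_1\wh{\otimes}1)$ and $\,\!_{\Gamma(\Cl(\mathbb{T}^n))^{\mathbb{T}^n}}(x_1\wh{\otimes}1|x_r\wh{\otimes}1)$, and only after using $\sum_r\,\!_{\Gamma(\Cl(\mathbb{T}^n))^{\mathbb{T}^n}}(x_r\wh{\otimes}1|x_1\wh{\otimes}1)\,\!_{\Gamma(\Cl(\mathbb{T}^n))^{\mathbb{T}^n}}(x_1\wh{\otimes}1|x_r\wh{\otimes}1)=1$ and the fact that $c(dy^p)$ graded-commutes with the image of $\eta$ do the surviving first-order pieces cancel against the character terms. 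That cancellation is precisely the paper's proof, and it is where the conclusion $B\in\Gamma^\infty(\End(S))$ --- on which the theorem and Corollary \ref{cor:connectionfactorisation} rest --- is won or lost; without carrying it out the displayed formula is unestablished. A minor further point: your final step replaces the paper's direct appeal to Kucerovsky's criteria by either the general constructive-product theorems or an operator homotopy $\D_s=\D+s(T-\D)$; both routes are plausible but are likewise only asserted (the homotopy needs norm-continuity of the bounded transforms, the citation needs verification of hypotheses beyond Hermitianness of $\nabla$), whereas the paper's route requires no new input once the formula is in hand.
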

\begin{proof}
Given $\xi\in L^2(S)$, 
$$
\Psi^{-1}(\xi)=\sum_{i=1}^N\sum_{k\in\Z^n}\sum_{r=1}^{2^{\floor{n/2}}}\big(\big(\psi_i\chi_{i,k-\ell}\wh{\otimes}(\chi_{k-\ell}^{-1}x_r\wh{\otimes}1)\big)\wh{\otimes}(1\wh{\otimes}\ol{x_r\wh{\otimes}1})\big)\wh{\otimes}\chi_{i,\ell-k}\phi_iP_k\xi.
$$
Using this we can compute
\begin{align}
\Psi\circ1\wh{\otimes}_\nabla\D_\ell\circ\Psi^{-1}&=\sum_{k\in\Z^n}\sum_{r=1}^{2^{\floor{n/2}}}\sum_{i,j=1}^N(-1)^{\deg x_r}\phi_i\chi_{i,k-\ell}\eta\big(\,\!_{\Gamma(\Cl(\mathbb{T}^n))^{\mathbb{T}^n}}(x_r\wh{\otimes}1|x_1\wh{\otimes}1)\big)\notag\\
&\qquad\times\big[\D,\psi_j\chi_{j,k-\ell}\psi_i\chi_{i,k-\ell}^{-1}\eta\big(\!\,_{\Gamma(\Cl(\mathbb{T}^n))^{\mathbb{T}^n}}(x_1\wh{\otimes}1|x_r\wh{\otimes}1)\big)\big]_\pm\chi_{j,\ell-k}\phi_jP_k\notag\\
&\quad+\sum_{i=1}^N\sum_{k\in\Z^n}\sum_{r=1}^{2^{\floor{n/2}}}\psi_i\chi_{i,k-\ell}\eta\big(\,\!_{\Gamma(\Cl(\mathbb{T}^n))^{\mathbb{T}^n}}(x_r\wh{\otimes}1|x_r\wh{\otimes}1)\big)\D\chi_{i,\ell-k}\phi_iP_k\notag\\
&=\sum_{k\in\Z^n}\sum_{r=1}^{2^{\floor{n/2}}}\sum_{i,j=1}^N(-1)^{\deg x_r}\phi_i\chi_{i,k-\ell}\eta\big(\,\!_{\Gamma(\Cl(\mathbb{T}^n))^{\mathbb{T}^n}}(x_r\wh{\otimes}1|x_1\wh{\otimes}1)\big)\notag\\
&\qquad\times\big[\D,\psi_j\chi_{j,k-\ell}\psi_i\chi_{i,k-\ell}^{-1}\eta\big(\!\,_{\Gamma(\Cl(\mathbb{T}^n))^{\mathbb{T}^n}}(x_1\wh{\otimes}1|x_r\wh{\otimes}1)\big)\big]_\pm\chi_{j,\ell-k}\phi_jP_k\notag\\
&\quad+\sum_{i=1}^N\sum_{k\in\Z^n}\chi_{i,k-\ell}[\D,\psi_i\chi_{i,\ell-k}]\phi_iP_k+\D,\label{eq:firstterm}
\end{align}
where we have used $\sum_{r=1}^{2^{\floor{n/2}}}\,\!_{\Gamma(\Cl(\mathbb{T}^n))^{\mathbb{T}^n}}(x_r\wh{\otimes}1|x_1\wh{\otimes}1)=1$ and $\sum_{i=1}^N\phi_i=1$. Let $I$ denote the first term of Equation \eqref{eq:firstterm}. By several applications of the graded commutator relation $[a,bc]_\pm=(-1)^{\deg b}b[a,c]_\pm+[a,b]_\pm c$, the first term of Equation \eqref{eq:firstterm} can be simplified to
\begin{align*}
I&=\sum_{k\in\Z^n}\sum_{j=1}^N[\D,\psi_j\chi_{j,k-\ell}]\chi_{j,\ell-k}\phi_jP_k+\sum_{k\in\Z^n}\sum_{r=1}^{2^{\floor{n/2}}}\sum_{i=1}^N(-1)^{\deg x_r}\phi_i\chi_{i,k-\ell}\notag\\
&\qquad\quad\qquad\qquad\qquad\times\eta\big(\,\!_{\Gamma(\Cl(\mathbb{T}^n))^{\mathbb{T}^n}}(x_r\wh{\otimes}1|x_1\wh{\otimes}1)\big)[\D,\psi_i\chi_{i,k-\ell}^{-1}]\eta\big(\!\,_{\Gamma(\Cl(\mathbb{T}^n))^{\mathbb{T}^n}}(x_1\wh{\otimes}1|x_r\wh{\otimes}1)\big)P_k\notag\notag\\
&\quad+\sum_{r=1}^{2^{\floor{n/2}}}(-1)^{\deg x_r}\eta\big(\,\!_{\Gamma(\Cl(\mathbb{T}^n))^{\mathbb{T}^n}}(x_r\wh{\otimes}1|x_1\wh{\otimes}1)\big)\big[\D,\eta\big(\!\,_{\Gamma(\Cl(\mathbb{T}^n))^{\mathbb{T}^n}}(x_1\wh{\otimes}1|x_r\wh{\otimes}1)\big)\big]_\pm.\notag
\end{align*}
With respect to the $(t^1,\ldots,t^n,y^1,\ldots,y^{m-n})$ coordinates on $U_i$, $\chi_{i,k}=e^{-2\pi i\sum_{j=1}^nt^jk_j}$, and so
\begin{align*}
\chi_{i,k}^{-1}[\D,\psi_i\chi_{i,k}]=\chi_{i,k}^{-1}c(d\chi_{i,k})=-2\pi i\sum_{j=1}^nk_jc(dt^j).
\end{align*}
Write $\D=\sum_{j=1}^nc(dt^j)\nabla^S_{X_j}+\sum_{s=1}^{m-n}c(dy^s)\nabla^S_{\de_{y^s}}$. Since $g(\de_{t^j},\de_{y^p})=0$ and $X_j^\flat=\sum_{p=1}^nh_{jk}dt^k$, the Clifford vector $c(dy^p)$ anticommutes with $c(X_j^\flat)$ and hence graded commutes with the image of $\Gamma(\Cl(\mathbb{T}^n))^{\mathbb{T}^n}$ under $\eta$ for each $p=1,\ldots,m-n$. Using this fact as well as the compatibility of $\nabla^S$ with the Levi-Civita connection, the first term of Equation \eqref{eq:firstterm} is locally
\begin{align*}
&I=-2\pi i\sum_{k\in\Z^n}\sum_{j=1}^nc(dt^j)(k_j-\ell_j)P_k+2\pi i\sum_{k\in\Z^n}\sum_{r=1}^{2^{\floor{n/2}}}\sum_{j=1}^n(-1)^{\deg x_r}\eta\big(\,\!_{\Gamma(\Cl(\mathbb{T}^n))^{\mathbb{T}^n}}(x_r\wh{\otimes}1|x_1\wh{\otimes}1)\big)\\
&\qquad\qquad\qquad\qquad\qquad\qquad\qquad\qquad\qquad\qquad\times c(dt^j)\eta\big(\!\,_{\Gamma(\Cl(\mathbb{T}^n))^{\mathbb{T}^n}}(x_1\wh{\otimes}1|x_r\wh{\otimes}1)\big)(k_j-\ell_j)P_k\\
&+\sum_{r=1}^{2^{\floor{n/2}}}\sum_{j=1}^n(-1)^{\deg x_r}\eta\big(\,\!_{\Gamma(\Cl(\mathbb{T}^n))^{\mathbb{T}^n}}(x_r\wh{\otimes}1|x_1\wh{\otimes}1)\big)\big[c(dt^j)\nabla^S_{X_j},\eta\big(\!\,_{\Gamma(\Cl(\mathbb{T}^n))^{\mathbb{T}^n}}(x_1\wh{\otimes}1|x_r\wh{\otimes}1)\big)\big]_\pm\notag\\
&+\sum_{r=1}^{2^{\floor{n/2}}}\sum_{p=1}^{m-n}\sum_{j=1}^n(-1)^{\deg x_r}\eta\big(\,\!_{\Gamma(\Cl(\mathbb{T}^n))^{\mathbb{T}^n}}(x_r\wh{\otimes}1|x_1\wh{\otimes}1)\big)\big[c(dy^p)\nabla^S_{\de_{y^p}},\eta\big(\!\,_{\Gamma(\Cl(\mathbb{T}^n))^{\mathbb{T}^n}}(x_1\wh{\otimes}1|x_r\wh{\otimes}1)\big)\big]_\pm\notag\\
&=\sum_{j=1}^nc(dt^j)(\nabla^S_{X_j}+\omega_j-2\pi\ell_j)-\sum_{r=1}^{2^{\floor{n/2}}}\sum_{j=1}^n(-1)^{\deg x_r}\eta\big(\,\!_{\Gamma(\Cl(\mathbb{T}^n))^{\mathbb{T}^n}}(x_r\wh{\otimes}1|x_1\wh{\otimes}1)\big)c(dt^j)\\
&\qquad\qquad\qquad\qquad\qquad\qquad\qquad\qquad\qquad\times\eta\big(\!\,_{\Gamma(\Cl(\mathbb{T}^n))^{\mathbb{T}^n}}(x_1\wh{\otimes}1|x_r\wh{\otimes}1)\big)(\nabla^S_{X_j}+\omega_j-2\pi\ell_j)\\
&\quad+\sum_{r=1}^{2^{\floor{n/2}}}\sum_{j=1}^n(-1)^{\deg x_r}\eta\big(\,\!_{\Gamma(\Cl(\mathbb{T}^n))^{\mathbb{T}^n}}(x_r\wh{\otimes}1|x_1\wh{\otimes}1)\big)c(dt^j)\nabla^{LC}_{X_j}\big(\eta\big(\!\,_{\Gamma(\Cl(\mathbb{T}^n))^{\mathbb{T}^n}}(x_1\wh{\otimes}1|x_r\wh{\otimes}1)\big)\big)\notag\\
&\quad+\sum_{r=1}^{2^{\floor{n/2}}}\sum_{j=1}^n(-1)^{\deg x_r}\eta\big(\,\!_{\Gamma(\Cl(\mathbb{T}^n))^{\mathbb{T}^n}}(x_r\wh{\otimes}1|x_1\wh{\otimes}1)\big)\big[c(dt^j),\eta\big(\!\,_{\Gamma(\Cl(\mathbb{T}^n))^{\mathbb{T}^n}}(x_1\wh{\otimes}1|x_r\wh{\otimes}1)\big)\big]_\pm\nabla^S_{X_j}\notag\\
&\quad+\sum_{r=1}^{2^{\floor{n/2}}}\sum_{p=1}^{m-n}(-1)^{\deg x_r}\eta\big(\,\!_{\Gamma(\Cl(\mathbb{T}^n))^{\mathbb{T}^n}}(x_r\wh{\otimes}1|x_1\wh{\otimes}1)\big)c(dy^p)\nabla^{LC}_{\de_{y^p}}\big(\eta\big(\!\,_{\Gamma(\Cl(\mathbb{T}^n))^{\mathbb{T}^n}}(x_1\wh{\otimes}1|x_r\wh{\otimes}1)\big)\big)\notag
\end{align*}
for $\omega_j\in\Gamma^\infty(\End(S))$ for $j=1,\ldots,n$, using $A_j=2\pi\sum_{k\in\Z^n}k_jP_k$ and Lemma \ref{lem:infgen}. Here $\nabla^{LC}$ denotes the extension of the Levi-Civita connection on the cotangent bundle to the Clifford bundle. Using $\sum_{r=1}^{2^{\floor{n/2}}}\,\!_{\Gamma(\Cl(\mathbb{T}^n))^{\mathbb{T}^n}}(x_r\wh{\otimes}1|x_1\wh{\otimes}1)\,\!_{\Gamma(\Cl(\mathbb{T}^n))^{\mathbb{T}^n}}(x_1\wh{\otimes}1|x_r\wh{\otimes}1)=1$ and the fact that $c(dy^p)$ graded commutes with the image of $\eta$, we can make some cancellations and, working locally, simplify the first term of Equation \eqref{eq:firstterm} to
\begin{align*}
I&=\sum_{j=1}^nc(dt^j)(\omega_j-2\pi\ell_j)\\
&-\sum_{r=1}^{2^{\floor{n/2}}}\sum_{j=1}^n(-1)^{\deg x_r}\eta\big(\,\!_{\Gamma(\Cl(\mathbb{T}^n))^{\mathbb{T}^n}}(x_r\wh{\otimes}1|x_1\wh{\otimes}1)\big)c(dt^j)\eta\big(\!\,_{\Gamma(\Cl(\mathbb{T}^n))^{\mathbb{T}^n}}(x_1\wh{\otimes}1|x_r\wh{\otimes}1)\big)(\omega_j-2\pi\ell_j)\notag\\
&+\sum_{r=1}^{2^{\floor{n/2}}}\sum_{j=1}^n(-1)^{\deg x_r}\eta\big(\,\!_{\Gamma(\Cl(\mathbb{T}^n))^{\mathbb{T}^n}}(x_r\wh{\otimes}1|x_1\wh{\otimes}1)\big)c(dt^j)\nabla^{LC}_{X_j}\big(\eta\big(\!\,_{\Gamma(\Cl(\mathbb{T}^n))^{\mathbb{T}^n}}(x_1\wh{\otimes}1|x_r\wh{\otimes}1)\big)\big)\notag\\
&+\sum_{r=1}^{2^{\floor{n/2}}}\sum_{p=1}^{m-n}(-1)^{\deg x_r}\eta\big(\,\!_{\Gamma(\Cl(\mathbb{T}^n))^{\mathbb{T}^n}}(x_r\wh{\otimes}1|x_1\wh{\otimes}1)\big)c(dy^p)\nabla^{LC}_{\de_{y^p}}\big(\eta\big(\!\,_{\Gamma(\Cl(\mathbb{T}^n))^{\mathbb{T}^n}}(x_1\wh{\otimes}1|x_r\wh{\otimes}1)\big)\big)\\
&\in\Gamma^\infty(\End(S)).
\end{align*}
The second term of Equation \eqref{eq:firstterm} is 
\begin{align*}
&\sum_{i=1}^N\sum_{k\in\Z^n}\chi_{i,k-\ell}[\D,\psi_i\chi_{i,\ell-k}]\phi_iP_k=2\pi i\sum_{k\in\Z^n}\sum_{j=1}^nc(dt^j)(k_j-\ell_j)P_k\\
&=-\sum_{j=1}^nc(dt^j)(\nabla^S_{X_j}+\omega_j-2\pi\ell_j)=-\sum_{j,q=1}^nh^{jq}c(X_q^\flat)(\nabla^S_{X_j}+\omega_j-2\pi\ell_j)
\end{align*}
for some $\omega_j\in\Gamma^\infty(\End(S))$ by Lemma \ref{lem:infgen}. Putting the expressions for Equation \eqref{eq:firstterm} together with Lemma \ref{lem:d1otimes1} and Lemma \ref{lem:infgen} yields
\begin{align*}
\Psi\circ(1\wh{\otimes}_\nabla\D_\ell
+\D_1\wh{\otimes}1)\circ\Psi^{-1}&=\D+\sum_{j,r=1}^n(W^{rj}-h^{rj})c(X_r^\flat)\nabla^S_{X_j}+B\\
&=\sum_{p=1}^{m-n}c(dy^p)\nabla^S_{\de_{y^p}}+\sum_{j,r,q=1}^nW^{rj}h_{rq}c(dt^q)\nabla^S_{X_j}+B
\end{align*}
for some $B\in\Gamma^\infty(\End(S))$, which establishes that 
$\Psi\circ(1\wh{\otimes}_\nabla\D_\ell
+\D_1\wh{\otimes}1)\circ\Psi^{-1}$ is a first order differential operator. Since 
$(W^{rj})_{rj=1}^n$ and $(h_{rq})_{r,q=1}^n$ are invertible, this also shows 
that the operator $\Psi\circ(1\wh{\otimes}_\nabla\D_\ell
+\D_1\wh{\otimes}1)\circ\Psi^{-1}$ is elliptic. 
Since $\nabla$ is Hermitian, 
$1\wh{\otimes}_\nabla\D_\ell$ is symmetric, and so 
$1\wh{\otimes}_\nabla\D_\ell+\D_1\wh{\otimes}1$ 
is the sum of a symmetric 
operator with a self-adjoint operator, which is symmetric. 
Elliptic operator 
theory, \cites{HigsonRoe,LawsonMichelsohn}, implies that $\Psi\circ(1\wh{\otimes}_\nabla\D_\ell
+\D_1\wh{\otimes}1)\circ\Psi^{-1}$ is essentially self-adjoint 
with compact resolvent, and hence 
$(C^\infty(M),L^2(S),\Psi\circ(1\wh{\otimes}_\nabla\D_\ell
+\D_1\wh{\otimes}1)\circ\Psi^{-1})$ is an equivariant spectral triple. That 
$(C^\infty(M),L^2(S),\Psi\circ(1\wh{\otimes}_\nabla\D_\ell
+\D_1\wh{\otimes}1)\circ\Psi^{-1})$ represents the product is now a straightforward application
of Kucerovsky's criteria.
\end{proof}
\begin{cor}
\label{cor:connectionfactorisation}
Suppose that each orbit is an isometric embedding of 
$\mathbb{T}^n$ in $M$. That is, the fundamental vector fields  
$T_e\mathbb{T}^n\ni v\mapsto X^{(v)}\in\Gamma^\infty(TM)$ 
satisfy $(X^{(v)}|X^{(v)})_{C(M)}=\|v\|^2$. Then
$$
\D-\Psi\circ(1\wh{\otimes}_\nabla\D_\ell
+\D_1\wh{\otimes}1)\circ\Psi^{-1}\in \Gamma^\infty(\End(S)).
$$
\end{cor}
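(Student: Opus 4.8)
The plan is to read the difference off directly from Theorem~\ref{thm:i-can-add} and then use the isometric-embedding hypothesis to kill the first-order term appearing there. By Theorem~\ref{thm:i-can-add},
$$
\D-\Psi\circ\big(1\wh{\otimes}_\nabla\D_\ell+\D_1\wh{\otimes}1\big)\circ\Psi^{-1}
=-\sum_{j,r=1}^n\big(W^{rj}-h^{rj}\big)c(X_r^\flat)\nabla^S_{X_j}-B,
$$
with $B\in\Gamma^\infty(\End(S))$. Since $\Gamma^\infty(\End(S))$ is a linear subspace of the operators on $L^2(S)$, it is enough to show that the first-order term vanishes, i.e. that $W^{rj}=h^{rj}$ for all $j,r=1,\dots,n$.

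Next I would unwind the relevant definitions. Recall that $(h_{jk})=(g(X_j,X_k))$ is the (pointwise) Gram matrix of the fundamental vector fields, $(h^{jk})=(h_{jk})^{-1}$, and, by construction, the normalisation functions are the entries of $(W^{jk})=(h_{jk})^{-1/2}$. The hypothesis $(X^{(v)}|X^{(v)})_{C(M)}=\|v\|^2$ means precisely that $g_x(X^{(v)}_x,X^{(v)}_x)=\|v\|^2$ for every $x\in M$ and $v\in T_e\mathbb{T}^n$. Polarising this quadratic identity gives $g_x(X^{(v)}_x,X^{(w)}_x)=\langle v,w\rangle$ for all $v,w\in T_e\mathbb{T}^n$. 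Applying this with $v=\tfrac{\de}{\de t^j}$ and $w=\tfrac{\de}{\de t^k}$, which form an orthonormal pair for the standard flat metric on $\mathbb{T}^n$, yields $h_{jk}(x)=g_x(X_j(x),X_k(x))=\delta_{jk}$, so $(h_{jk})=I_n$ identically on $M$.

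Consequently $(W^{jk})=I_n^{-1/2}=I_n=I_n^{-1}=(h^{jk})$, so $W^{rj}-h^{rj}=0$ for all $j,r$ and the sum in the displayed formula disappears. Hence $\D-\Psi\circ(1\wh{\otimes}_\nabla\D_\ell+\D_1\wh{\otimes}1)\circ\Psi^{-1}=-B\in\Gamma^\infty(\End(S))$, as required. There is essentially no obstacle here once Theorem~\ref{thm:i-can-add} is in hand; the only point that needs a little care is the passage from the quadratic identity in the hypothesis to the statement $(h_{jk})=I_n$ about the Gram matrix, which is handled by the polarisation identity together with the fact that $\{\de/\de t^j\}$ is an orthonormal basis of $T_e\mathbb{T}^n$.
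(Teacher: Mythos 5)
Your proof is correct and follows essentially the same route as the paper: apply Theorem \ref{thm:i-can-add} and observe that the isometric-orbit hypothesis forces $(h_{jk})=I_n$, hence $W^{rj}=h^{rj}=\delta^{rj}$, so the first-order correction vanishes and the difference is the endomorphism $-B$. The only difference is that you spell out the polarisation step showing $g(X_j,X_k)=\delta_{jk}$, which the paper leaves implicit.
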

\begin{proof}
In this case, the normalisation functions are $W^{jk}=\delta^{jk}$, 
and so  
Lemma \ref{thm:i-can-add} becomes $\Psi\circ(1\wh{\otimes}_\nabla\D_\ell
+\D_1\wh{\otimes}1)\circ\Psi^{-1}=\D+B$ where $B\in\Gamma^\infty(\End(S))$.
\end{proof}
\section{Example: the Dirac operator on the 2-sphere.}
\label{sec:dirac-ess2}
The spinor Dirac operator $\D$ on $S^2$ defines an even spectral triple 
$(C^\infty(S^2),L^2(\bs{\$}_{S^2}),\D)$. The circle acts on $S^2$ by 
rotation about the north-south axis, and there are countably infinitely 
many lifts of this action to $L^2(\bs{\$}_{S^2})$, such that 
$(C^\infty(S^2),L^2(\bs{\$}_{S^2}),\D)$ is an equivariant spectral triple. 
One can then ask whether any of these spectral triples can be factorised, but since the 
action of $\mathbb{T}$ on $S^2$ is not free we cannot apply the earlier theory.

In fact, we cannot factorise $(C^\infty(S^2),L^2(\bs{\$}_{S^2}),\D)$, 
since the spectral subspace assumption is not satisfied, and, more seriously, 
$K^1(C(S^2)^{\mathbb{T}})= K^1([0,1])=\{0\}$. Since the class of the triple
$(C^\infty(S^2),L^2(\bs{\$}_{S^2}),\D)$ in $K^0(C(S^2))$ is non-zero, 
it is impossible to recover this class under the Kasparov product 
between $KK^1(C(S^2),C(S^2)^{\mathbb{T}})$ and $KK^1(C(S^2)^{\mathbb{T}},\C)=\{0\}$.

Instead, we remove the poles, and restrict the spectral triple to $(C^\infty_c(S^2\take\{N,S\},L^2(\bs{\$}_{S^2}),\D)$ and ask whether this equivariant spectral triple can be factorised. The circle now acts freely, and hence the spectral subspace assumption is satisfied.

We show that factorisation is achieved for 
$(C^\infty_c(S^2\take\{N,S\}),L^2(\bs{\$}_{S^2}),\D)$ 
for every possible lift of the circle action. Unlike for a 
free action on a compact manifold, the positivity criterion 
is satisfied for precisely two choices of the character 
$\ell\in\Z$ of Defintion \ref{defn:theta} used to define the right-hand module.

We will describe the Dirac operator $\D$ on the spinor bundle 
$\bs{\$}_{S^2}$ over $S^2$, \cites{VarillyIntro,Gracia}.

Let $N$ be the North pole of $S^2$, and let $U_N$ be $S^2\take\{N\}$. 
A chart for $U_N$ is given by stereographic projection onto $\C$. 
This chart defines a trivialisation of the spinor bundle $\bs{\$}_{S^2}$. 
All work will be done in the $U_N$ trivialisation unless explicitly stated 
otherwise. We will work in the standard polar coordinates 
$(\theta,\phi)\in(0,\pi)\times(0,2\pi)$.

The spinor Dirac operator 
is given by
\begin{align}\label{eq:diracpolar}
\D&=\begin{pmatrix}0&e^{i\phi}\big(i\de_\theta+\csc(\theta)\de_\phi+i\cot(\theta/2)/2\big)\\e^{-i\phi}\big(i\de_\theta-\csc(\theta)\de_\phi+i\cot(\theta/2)/2\big)&0\end{pmatrix}.
\end{align}
The Hilbert space $L^2(\bs{\$}_{S^2})$ is graded by $\gamma=\left(\begin{smallmatrix}1&0\\0&-1\end{smallmatrix}\right)$. 
The action of the circle $\mathbb{T}$ on $S^2$ is $t\cdot(\theta,\phi)=(\theta,\phi+2\pi t)$. 
There are countably infinitely many lifts of this action which make 
$(C^\infty(S^2),L^2(\bs{\$}_{S^2}),\D)$ into a $\mathbb{T}$-equivariant spectral triple.
\begin{prop}\label{prop:tk}
Any even unitary action of $\mathbb{T}$ on $L^2(\bs{\$}_{S^2})$ 
which commutes with $\D$ and which is compatible with the action on 
$C(S^2)$ is equal to $V_k:\mathbb{T}\rightarrow U(L^2(\bs{\$}_{S^2}))$ 
for some $k\in\Z$, where
$$
V_{k,t}\begin{pmatrix}f(\theta,\phi)\\g(\theta,\phi)\end{pmatrix}
:=\begin{pmatrix}e^{2\pi ikt}f(\theta,\phi-2\pi t)\\e^{2\pi i(k-1)t}g(\theta,\phi-2\pi t)\end{pmatrix}.
$$
\end{prop}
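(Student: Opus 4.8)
The plan is to fix one reference lift and show that every other lift differs from it by a scalar phase. First I would take $V_0$ to be the action $V_{k,t}$ of the statement with $k=0$, i.e.\ $V_{0,t}=\mathrm{diag}\bigl(R_t,\,e^{-2\pi it}R_t\bigr)$ where $R_t$ is the shift $(\theta,\phi)\mapsto(\theta,\phi-2\pi t)$; a direct computation from \eqref{eq:diracpolar} (using that the coefficients $\csc\theta$ and $\cot(\theta/2)$ are $\phi$-independent, so that conjugating $D_1=e^{i\phi}\bigl(i\partial_\theta+\csc\theta\,\partial_\phi+\tfrac i2\cot(\theta/2)\bigr)$ by $R_t$ just multiplies it by $e^{-2\pi it}$) shows that each $V_k$, and in particular $V_0$, is an even unitary $\mathbb{T}$-action on $L^2(\bs{\$}_{S^2})$ commuting with $\D$ and compatible with the rotation action on $C(S^2)$. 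Now let $V$ be any other such action and put $W_t:=V_tV_{0,t}^{-1}$. Since $V$ and $V_0$ implement the same action of $C(S^2)$, $W_t$ commutes with every multiplication operator $M_f$, $f\in C(S^2)$; in the $U_N$-trivialisation the commutant of the diagonal representation of $C(S^2)$ on $L^2(\bs{\$}_{S^2})$ consists of the matrix-valued $L^\infty$ multiplication operators, so since $W_t$ is also even we obtain $W_t=\mathrm{diag}(M_{w_t^+},M_{w_t^-})$ with $w_t^+,w_t^-\in L^\infty(S^2)$ unimodular.

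Next I would feed in the commutation with $\D$, which gives $[\D,W_t]=0$ because $[\D,V_t]=[\D,V_{0,t}]=0$. Since then $[\D^2,W_t]=0$ and $\D^2$ is elliptic, $W_t$ preserves $\dom(\D^{2n})\subseteq H^{2n}$ for all $n$, hence preserves $C^\infty(\bs{\$}_{S^2})$; applying $M_{w_t^\pm}$ to constant sections on coordinate balls forces $w_t^\pm$ to be smooth on $U_N$. Writing $\D=\left(\begin{smallmatrix}0&D_1\\D_2&0\end{smallmatrix}\right)$ as in \eqref{eq:diracpolar}, the $(1,2)$ block of $[\D,W_t]=0$ reads $M_{w_t^+}D_1=D_1M_{w_t^-}$, i.e.\ $M_{w_t^+-w_t^-}D_1=[D_1,M_{w_t^-}]$. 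The right-hand side is of order zero (commutator of a first-order operator with a multiplication operator), while the left-hand side has nonvanishing first-order part unless $w_t^+=w_t^-$; hence $w_t^+=w_t^-=:w_t$ and $[D_1,M_{w_t}]=0$. Computing this commutator in the $(\theta,\phi)$ coordinates gives $i\,\partial_\theta w_t+\csc\theta\,\partial_\phi w_t=0$, and writing $w_t=e^{ig_t}$ with $g_t$ real this forces $\partial_\theta g_t=\partial_\phi g_t=0$; since the coordinate patch is connected and conull, $w_t$ is a.e.\ equal to a constant $\mu(t)\in U(1)$, so $W_t=\mu(t)\,\mathrm{Id}$.

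Finally, strong continuity of $V$ and $V_0$ makes $t\mapsto\mu(t)$ a continuous homomorphism $\mathbb{T}\to U(1)$, hence $\mu(t)=e^{2\pi ikt}$ for some $k\in\Z$; then $V_t=\mu(t)V_{0,t}$, and multiplying the two components of $V_{0,t}$ by $e^{2\pi ikt}$ changes the exponents $0$ and $-1$ into $k$ and $k-1$, which is exactly the formula for $V_{k,t}$. The main obstacle is the middle paragraph: a priori $V$ is only strongly continuous, so the multipliers $w_t^\pm$ are merely measurable, and the pointwise PDE argument cannot be run until the elliptic-regularity bootstrap via $\D^2$ has made them smooth; once that is in place, the explicit form of $\D$ from \eqref{eq:diracpolar} does the rest, the remaining ingredients being soft facts about commutants and about continuous characters of $\mathbb{T}$.
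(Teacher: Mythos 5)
Your proposal is correct, and its underlying mechanism is the same as the paper's (compatibility with $C(S^2)$ forces a multiplication-type form, evenness kills the off-diagonal part, commutation with $\D$ pins down the phases, and the homomorphism property with continuity yields the integer $k$), but it is organised differently and fills in precisely the steps the paper leaves implicit. The paper writes any compatible lift directly as a $2\times2$ matrix of functions of $(\theta,\phi,t)$ composed with the pullback of the rotation and then simply asserts that unitarity, $[\D,V_t]=0$ and the group law force the diagonal entries to be $e^{2\pi ikt}$ and $e^{2\pi i(k-1)t}$. You instead normalise by the explicit lift $V_{0}$ and study $W_t=V_tV_{0,t}^{-1}$: the commutant argument justifies why $W_t$ must be a matrix-valued $L^\infty$ multiplier (the paper's unexplained ``hence''), evenness makes it diagonal, the elliptic bootstrap through powers of $\D$ upgrades the a priori merely measurable unimodular phases to smooth ones so that the pointwise commutator computation is legitimate, and the symbol argument plus the first-order PDE force $W_t$ to be a constant scalar; continuous characters of $\mathbb{T}$ then give $\mu(t)=e^{2\pi ikt}$, and the exponents $k$ and $k-1$ are inherited from $V_0$ rather than computed separately. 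What your route buys is rigour at exactly the points where the paper is terse (the multiplier form, the regularity of the phases, their spatial constancy), at the cost of invoking heavier tools (von Neumann commutants, elliptic regularity); note that, like the paper, you do use strong continuity of the action, without which the character classification $\mu(t)=e^{2\pi ikt}$, $k\in\Z$, would fail.
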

\begin{proof}
We require the action of $\mathbb{T}$ on $L^2(\bs{\$}_{S^2})$ to be 
compatible with the action $\alpha$ of $\mathbb{T}$ on $C(S^2)$, 
which is $\alpha_t(f)(\theta,\phi) =f(\theta,\phi-2\pi t)$. Hence the action 
on spinors is of the form
$$
V_t\begin{pmatrix}f(\theta,\phi)\\g(\theta,\phi)\end{pmatrix}
=\begin{pmatrix}a&b\\d&h\end{pmatrix}\begin{pmatrix}f(\theta,\phi-2\pi t)\\g(\theta,\phi-2\pi t)\end{pmatrix},
$$
where $a,b,d$ and $h$ can \emph{a priori} depend on $\theta$, $\phi$ 
and $t$. Since the action of $\mathbb{T}$ should commute with the 
grading, we require $b=d=0$. Requiring that the action is unitary, that it 
commutes with $\D$ and that it is a group homomorphism determines 
that $a=e^{2\pi ikt}$ and $h=e^{2\pi i(k-1)t}$ for some $k\in\Z$.
\end{proof}
\begin{rem}
None of these actions preserve the real structure on $\bs{\$}_{S^2}$, 
so they are $\spin^c$ but not $\spin$ actions. There is however a 
unique lift of the ``double'' action of $\mathbb{T}$, 
$t\cdot (\theta,\phi)=(\theta,\phi+4\pi t)$, 
to a spin action given by setting $k=1/2$ and replacing $t$ by $2t$ in Proposition \ref{prop:tk}. 
\end{rem}
We fix $k\in\Z$ for the remainder of the section, 
fixing a representation $V_k:\mathbb{T}\rightarrow U(L^2(\bs{\$}_{S^2}))$.
The spectral subspaces of $C(S^2)$ are 
\begin{align*}
C(S^2)_j=\left\{\begin{array}{ll}\{f(\theta):f\in C([0,1])\}&\text{if }j=0\\
\{f(\theta)e^{-ij\phi}:f\in C_0((0,1))\}&\text{if }j\neq0.\end{array}\right.
\end{align*}
Hence 
$$
\ol{C(S^2)_jC(S^2)_j^*}\cong\left\{\begin{array}{ll}C([0,1])&\text{if }j=0\\
C_0((0,1))&\text{if }j\neq0.\end{array}\right.
$$
Since $C_0((0,1))$ is not a complemented ideal in 
$C(S^2)^{\mathbb{T}}\cong C([0,1])$, $C(S^2)$ 
does not satisfy the spectral subspace assumption, 
and so we cannot define the left-hand module if we 
use the $C^*$-algebra $C(S^2)$. However, the SSA is 
satisfied for $C_0(S^2\take\{N,S\})$, since the action on $S^2\take\{N,S\}$ is free, by Proposition \ref{prop:fullSS}.

By taking the fundamental vector field map and normalising as in 
Section \ref{sect:manifoldfact}, we define the map 
$\eta:\Gamma(\Cl(\mathbb{T}))^{\mathbb{T}}\rightarrow B(L^2(\bs{\$}_{S^2}))$ by
\begin{align*}
\eta(c(dt))=-\frac{1}{\sqrt{g(d\phi,d\phi)}}c(d\phi)
=\begin{pmatrix}0&-e^{i\phi}\\e^{-i\phi}&0\end{pmatrix}
\end{align*}
We check that $\eta$ satisfies the conditions 
of Definition \ref{defn:theta}. Clearly $\eta(c(dt))$ commutes with the 
algebra, so Condition (1) is satisfied. Since $a\eta(c(dt))$ is a smooth 
bundle endomorphism for all $a\in C^\infty_c(S^2\take\{S,N\})$, 
$a\eta(c(dt))$ preserves $\dom(\D)$. It remains to check the commutation condition. 
We compute:
\begin{align*}
&\{\D,\eta(c(dt))\}=\\
&\bigg\{\begin{pmatrix}0&e^{i\phi}\big(i\de_\theta+\csc(\theta)\de_\phi+i\cot(\theta/2)/2\big)\\
e^{-i\phi}\big(i\de_\theta-\csc(\theta)\de_\phi+i\cot(\theta/2)/2\big)&0\end{pmatrix},\begin{pmatrix}0&-e^{i\phi}\\e^{-i\phi}&0\end{pmatrix}\bigg\}\\
&=\begin{pmatrix}2\csc(\theta)\de_\phi-i\csc(\theta)&0\\0&2\csc(\theta)\de_\phi+i\csc(\theta)\end{pmatrix}.
\end{align*}
Hence if $f(\theta)e^{- ij\phi}\in C^\infty_c(S^2\take\{S,N\})_j$, then 
\begin{align*}
&\{\D,\eta(c(dt))\}f(\theta)e^{-ij\phi}P_\ell\\
&=\begin{pmatrix}2i\csc(\theta)(k-\ell-j)-i\csc(\theta)&0\\0&2i\csc(\theta)(k-\ell-j-1)+i\csc(\theta)\end{pmatrix}f(\theta)e^{-ij\phi}P_\ell\\
&=-i\csc(\theta)(2j+2\ell-2k+1)f(\theta)e^{-ij\phi}P_\ell.
\end{align*}
Since $f\in C_c((0,\pi))$, this is bounded, and so 
Condition (2) of Definition \ref{defn:theta} is satisfied. 
Therefore $(\ell,\eta)$ satisfy the conditions of Definition \ref{defn:theta} for any $\ell\in\Z$.

Let $n,\ell\in\Z$, and let 
$\xi=\left(\begin{smallmatrix}f(\theta)e^{i(k-n-\ell)\phi}\\g(\theta)e^{i(k-n-\ell-1)\phi}\end{smallmatrix}\right)
\in\dom(\D)\cap L^2(\bs{\$}_{S^2})_{n+\ell}$. 
Then the positivity criterion reduces to
\begin{align*}
&\Ideal{\D\xi,in\eta(c(dt))P_{n+\ell}\xi}+\Ideal{in\eta(c(dt))P_{n+\ell}\xi,\D\xi}\\
&=\int_0^\pi\!\!\!\int_0^{2\pi}d\phi\,d\theta\sin(\theta)\times\\
&\Big(\ol{e^{i(k-n-\ell)\phi}(ig'(\theta)+i(k-n-\ell-1)\csc(\theta)g(\theta)+i\cot(\theta/2)g(\theta)/2)}(-ine^{i(k-n-\ell)\phi}g(\theta))\\
&\quad+\ol{e^{i(k-n-\ell-1)\phi}(if'(\theta)-i(k-n-\ell)\csc(\theta)f(\theta)+i\cot(\theta/2)f(\theta)/2)}(ine^{i(k-n-\ell-1)\phi}f(\theta))\\
&\quad+\ol{-ine^{i(k-n-\ell)\phi}g(\theta)}e^{i(k-n-\ell)\phi}(ig'(\theta)+i(k-n-\ell-1)\csc(\theta)g(\theta)+i\cot(\theta/2)g(\theta)/2)\\
&\quad+\ol{ine^{i(k-n-\ell-1)\phi}f(\theta)}e^{i(k-n-\ell-1)\phi}(if'(\theta)-i(k-n-\ell)\csc(\theta)f(\theta)+i\cot(\theta/2)f(\theta)/2)\Big)\\
&=4\pi n(n-k+\ell+1/2)\int_0^\pi d\theta\Big(|f(\theta)|^2+|g(\theta)|^2\Big).
\end{align*}
If $p(n)=2n(n-k+\ell+1/2)$ is non-negative for all $n\in\Z$, 
then the factorisation condition is satisfied. Conversely, since 
$\int_0^\pi d\theta\Big(|f(\theta)|^2+|g(\theta)|^2\Big)$ is not 
bounded by $\|\xi\|^2$, if $p(n)<0$ for some $n\in\Z$, then 
$\Ideal{\D\xi,-in\eta(c(dt))P_{n+\ell}\xi}+\Ideal{-in\eta(c(dt))P_{n+\ell}\xi,\D\xi}$ 
is not bounded from below and the factorisation condition is not satisfied.

Since $\ell\in\Z$ has thus far not been fixed, we will 
determine for which values of $\ell$ the polynomial 
$p:\Z\rightarrow\R$ is non-negative. As a real-valued polynomial, 
$p$ has a minimum at $x=(k-\ell)/2-1/4$.

Suppose $k-\ell$ is even. Then the integer values of 
$n$ either side of this minimum are $n=(k-\ell)/2-1$ and 
$n=(k-\ell)/2$, at which $p(n)$ has respective values 
$-(\ell-k+2)(\ell-k-1)/2$ and $-(\ell-k+1)(\ell-k)/2$. 
The smallest of these two values is $p((k-\ell)/2)=-(\ell-k+1)(\ell-k)/2$. 
As a function of $\ell$, $q(\ell)=-(\ell-k+1)(\ell-k)/2$ has a 
maximum at $\ell=k-1/2$. The integer values on either side 
of this with $k-\ell$ even are $\ell=k$ and $\ell=k-2$, at which 
$q(\ell)$ has respective values 0 and $-1$. Therefore if $k-\ell$ 
is even, then $p(n)$ is non-negative if and only if $\ell=k$.

Suppose now that $k-\ell$ is odd. Then the integer values of 
$n$ either side of the minimum $n=(k-\ell)/2-1/4$ are 
$n=(k-\ell)/2-1/2$ and $n=(k-\ell)/2+1/2$, at which $p(n)$ 
has respective values $-(\ell-k+1)(\ell-k)/2$ and $-(\ell-k+2)(\ell-k-1)/2$, 
the smallest of which is $p((k-\ell)/2-1/2)=-(\ell-k+1)(\ell-k)/2$. 
As a function of $\ell$, $r(\ell)=-(\ell-k+1)(\ell-k)/2$ has a 
maximum at $\ell=k-1/2$. The values on either side such that 
$k-\ell$ is odd are $\ell=k-1$ and $\ell=k+1$, at which $r(\ell)$ 
has respective values 0 and $-1$. Therefore if $k-\ell$ is odd, 
then $p(n)$ is non-negative if and only if $\ell=k-1$.

Thus factorisation is achieved for the equivariant spectral triple 
$(C^\infty_c(S^2\take\{N,S\}),L^2(\bs{\$}_{S^2}),\D)$ for any lift $V_k$ of the circle 
action to $L^2(\bs{\$}_{S^2})$, by choosing the 
characters $\ell=k$ or $\ell=k-1$ when constructing the right-hand module.

We conclude the 2-sphere example by examining the operator 
on the right-hand module, which, upon identifying 
$C_0(S^2\take\{N,S\})^{\mathbb{T}}$ with $C_0((0,\pi))$ and 
$\Gamma(\Cl(\mathbb{T}))^{\mathbb{T}}$ with $\Cl_1$, defines a 
spectral triple for $C_0((0,\pi))\wh{\otimes}\Cl_1$ . One might wonder 
whether it can be obtained from an odd spectral triple for $C_0((0,\pi))$, 
such as that defined by (some self-adjoint extension of) the Dirac 
operator on $(0,\pi)$. We show that this is not the case; for each 
$\ell\in\Z$ there is no odd spectral triple $(C^\infty_c((0,\pi)),\h',\D')$ 
such that the right-hand module is the even spectral triple 
corresponding to $(C^\infty_c((0,\pi)),\h',\D')$.

Let $k,\ell\in\Z$ be fixed, where $V_k:\mathbb{T}\rightarrow U(L^2(\bs{\$}_{S^2}))$ 
is the representation and $(\ell,\eta)$ is the pair of Definition \ref{defn:theta}. 
Define $F:\h_\ell\rightarrow L^2([0,\pi])\wh{\otimes}\C^2$ by
$$
F\left(\begin{pmatrix}f(\theta)e^{i(k-\ell)\phi}\\g(\theta)e^{i(k-\ell-1)\phi}\end{pmatrix}\right)=\sqrt{\sin\theta}\begin{pmatrix}if(\theta)\\g(\theta)\end{pmatrix}.
$$
The map $F$ is a $C_0((0,\pi))\wh{\otimes}\Cl_1$-linear $\Z_2$-graded unitary 
isomorphism between $L^2(\bs{\$}_{S^2})_\ell$ and $L^2([0,\pi])\wh{\otimes}\C^2$, 
where the latter space is graded by 
$1\wh{\otimes}\left(\begin{smallmatrix}1&0\\0&-1\end{smallmatrix}\right)$ 
and the action of $\Cl_1$ is given by 
$c\mapsto 1\wh{\otimes}\left(\begin{smallmatrix}0&1\\1&0\end{smallmatrix}\right)$. 
We can compute
\begin{align*}
F\circ \D_\ell\circ F^{-1}=-i\de_\theta\wh{\otimes}\omega-(k-\ell-1/2)\csc(\theta)\wh{\otimes}c,
\end{align*}
where $\omega=\left(\begin{smallmatrix}0&-i\\i&0\end{smallmatrix}\right)$. 
Hence the right-hand module is unitarily equivalent to the spectral triple
$$
\Big(C^\infty_c((0,\pi))\wh{\otimes}\Cl_1,L^2([0,\pi])\wh{\otimes}\C^2,\,-i\de_\theta\wh{\otimes}\omega
-(k-\ell-1/2)\csc(\theta)\wh{\otimes}c\Big).
$$
If $(C^\infty_c((0,\pi)),L^2([0,\pi]),\D')$ is an odd spectral triple, 
then the corresponding even spectral triple is 
$(C^\infty_c((0,\pi))\wh{\otimes}\Cl_1,L^2([0,\pi])\wh{\otimes}\C^2,\D'\wh{\otimes}\omega)$. 
The presence of the $(k-\ell-1/2)\csc(\theta)\wh{\otimes}c$ factor 
means that the right-hand module is not the even spectral triple 
corresponding to any odd spectral triple.
\begin{bibdiv}
\begin{biblist}
\bib{BaajJulg}{article}{title={Th\'{e}orie bivariante de Kasparov et op\'{e}rateurs non born\'{e}es dans les $C^*$-modules hilbertiens},author={S. Baaj},author={P. Julg},journal={C. R. Acad. Sci. Paris},volume={296},date={1983},pages={875--878}}
\bib{BerlineGetzlerVergne}{book}{title={Heat Kernels and Dirac Operators},author={Berline, N.},author={Getzler, E.},author={Vergne, M.},date={2004},publisher={Springer},address={Berlin}}
\bib{BMS}{article}{title={Gauge theory for spectral triples and the unbounded Kasparov product},author={S. Brain},author={B. Mesland},author={W. D. van Suijlekom},date={2013},note={To appear in J. Noncommut. Geom., preprint at arXiv:1306.1951}}
\bib{CGRS2}{article}{title={Index theory for locally compact noncommutative geometries},author={Carey, A. L.},author={Gayral, V.},author={Rennie, A.},author={Sukochev, F. A.},journal={Mem. Amer. Math. Soc.},volume={231},number={1085},date={2014}}
\bib{CNNR}{article}{title={Twisted cyclic theory, equivariant $KK$-theory and KMS states},author={A. L. Carey},author={S. Neshveyev},author={R. Nest},author={A. Rennie},journal={J. reine angew. Math.},volume={650},date={2011},pages={161--191}}
\bib{Connes}{book}{title={Noncommutative Geometry},author={Connes, A.},date={1994},publisher={Academic Press},address={London and San Diego}}
\bib{ConnesLandi}{article}{title={Noncommutative manifolds: the instanton algebra and isospectral deformations},author={Connes, A.},author={Landi, G.},journal={Commun. Math. Phys.},volume={221},date={2001},pages={141--159}}
\bib{DS2013}{article}{title={Noncommutative circle bundles and new Dirac operators},author={L. Dabrowski},author={A. Sitarz},journal={Commun. Math. Phys.},volume={318},date={2013},pages={111--130}}
\bib{DSZ2014}{article}{title={Dirac operators on noncommutative principal circle bundles},author={Dabrowski, L.},author={Sitarz, A.},author={Zucca, A.},journal={Int. J. Geom. Methods Mod. Phys.},volume={11},date={2014}}
\bib{Gracia}{book}{title={Elements of Noncommutative Geometry},author={Gracia-Bond\'{i}a, J. M.},author={V\'{a}rilly, J. C.},author={Figueroa, H.},date={2001},publisher={Birkh\"{a}user},address={Boston}}
\bib{HigsonRoe}{book}{title={Analytic K-Homology},author={Higson, Nigel},author={Roe, John},date={2000},publisher={Oxford University Press},address={Oxford}}
\bib{KaadLesch}{article}{title={Spectral flow and the unbounded Kasparov product},author={Kaad, J.},author={Lesch, M.},journal={Adv. Math.},volume={248},date={2013},pages={495--530}}
\bib{KasparovKK}{article}{title={The operator $K$-functor and extensions of $C^*$-algebras},author={Kasparov, G. G.},journal={Math. USSR Izv.},date={1981},volume={16},pages={513--572}}
\bib{Kucerovsky}{article}{title={The $KK$-product of unbounded modules},author={Kucerovsky, D.},journal={J. $K$-Theory},volume={11},date={1997},pages={17--34}}
\bib{Kucerovsky2000}{article}{title={A lifting theorem giving an isomorphism of $KK$-products in bounded and unbounded $KK$-theory},author={Kucerovsky, D.},journal={J. Operator Theory},volume={44},date={2000},pages={255--275}}
\bib{Lance}{book}{title={Hilbert $C^*$-Modules},author={Lance, E. C.},date={1995},publisher={CUP},address={Cambridge}}
\bib{LawsonMichelsohn}{book}{title={Spin Geometry},author={Lawson, H. B.},author={Michelsohn, M.-L.},date={1989},publisher={PUP},address={Princeton}}
\bib{Mesland}{article}{title={Unbounded bivariant $K$-theory and correspondences in noncommutative geometry},author={Mesland, B.},journal={J. reine angew. Math.},volume={691},date={2014},pages={101--172}}
\bib{MR}{article}{title={Nonunital spectral triples and metric completeness in unbounded $KK$-theory},author={Mesland, B.},author={Rennie, A.},date={2015},note={Preprint, arXiv:1502.04520}}
\bib{ReedSimon1}{book}{title={Methods of Modern Mathematical Physics. I. Functional Analysis. 2nd ed.},author={Reed, M.},author={Simon, B.},date={1980},publisher={Academic Press},address={New York}}
\bib{PR}{article}{title={The noncommutative geometry of graph $C^*$-algebras I: The Index Theorem},author={Pask, D.},author={Rennie, A.},journal={J. Funct. Anal.},volume={233},date={2006},pages={92--134}}
\bib{PRS}{article}{title={The noncommutative geometry of $k$-graph $C^*$-algebras},author={Pask, D.},author={Rennie, A.},author={Sims, A.},journal={J. K-Theory},volume={1},date={2008},pages={259--304}}
\bib{RW}{book}{title={Morita Equivalence and Continuous-Trace $C^*$-algebras},author={Raeburn, I.},author={Williams, D. P.},date={1998},publisher={AMS},address={Providence}}
\bib{Reinhart1959}{article}{title={Foliated manifolds with bundle-like metrics},author={Reinhart, B. L.},journal={Ann. Math.},volume={69},date={1959},pages={119--132}}
\bib{Rieffel}{article}{title={Deformation Quantization for Actions of $\mathbb{R}^d$},author={Rieffel, M. A.},date={1993},journal={Mem. Amer. Math. Soc.},number={506},volume={106}}
\bib{Slebarski1985}{article}{title={Dirac operators on a compact Lie group},author={Slebarski, S.},journal={Bull. London Math. Soc.},volume={17},date={1985},pages={579--583}}
\bib{VarillyIntro}{book}{title={An Introduction to Noncommutative Geometry},author={V\'{a}rilly, J. C.},date={2006},publisher={J. Eur. Math. Soc.},address={Z\"{u}rich}}
\bib{Zucca}{book}{title={Dirac Operators on Quantum Principal $G$-Bundles}, author={A. Zucca}, date={2013}, publisher={SISSA, Digital Library}}
\end{biblist}
\end{bibdiv}
\end{document}